\newcommand{\subparagraph}{}
\newcolumntype{C}[1]{>{\centering\let\newline\\\arraybackslash\hspace{0pt}}m{#1}}
\theoremstyle{plain}
\newtheorem{theorem}{Theorem}
\newtheorem{lemma}{Lemma}
\newtheorem{proposition}{Proposition}
\newtheorem{corollary}{Corollary}
\theoremstyle{definition}
\newtheorem{definition}{Definition}
\newtheorem{example}{Example}
\theoremstyle{remark}
\newtheorem{remark}{Remark}
\newtheorem{claim}{Claim}
\theoremstyle{definition}
\DeclareMathOperator*{\argmin}{arg\,min}
\newcommand{\phsc}{\Phi_{\text{HSC}}}
\newcommand{\phsclr}{\Phi_{\text{HSCLR}}}
\newcommand{\era}{{\sf x}}
\newcommand{\nn}{\nonumber}
\newcommand{\ind}{\mathbb{I}}
\newcommand{\bern}{{\sf Bern}}
\newcommand{\caa}{c_0}
\newcommand{\caaa}{c_1}
\newcommand{\cccc}{c_3}
\newcommand{\cddd}{c_4}
\newcommand{\ceee}{c_5}
\newcommand{\cfff}{c_6}
\newcommand{\cseven}{c_7}
\newcommand{\ceight}{c_8}
\newcommand{\cnine}{c_{9}}
\newcommand{\cten}{c_{10}}
\newcommand{\celeven}{c_{11}}
\newcommand{\ctwelve}{c_{12}}
\newcommand{\cthirteen}{c_{13}}
\newcommand{\cfourteen}{c_{14}}
\newcommand{\cfifteen}{c_{15}}
\newcommand{\cost}{\text{cost}}
\newcommand{\fit}{\text{fit}}
\newcommand{\ww}{\mathbf{W}}
\newcommand{\ee}{\eta}
\newcommand{\Az}{\mathbf{A}^0}
\newcommand{\nei}{\mathcal{E}^{(i)}}
\newcommand{\cweakone}{c_{0}}
\newcommand{\cweaktwo}{c_{1}}
\newcommand{\cdetec}{c_{2}}
\newcommand{\ctt}{c_{\text{thr}}}
\newcommand{\den}{\text{den}}
\newcommand{\Uz}{\mathbf{U}^0}
\newcommand{\dd}{{\sf d}}
\newcommand{\eeq}{\end{equation}}
\newcommand{\bea}{\begin{eqnarray}}
	\newcommand{\eea}{\end{eqnarray}}
\newcommand{\bean}{\begin{eqnarray*}}
	\newcommand{\eean}{\end{eqnarray*}}
\newcommand{\bit}{\begin{itemize}}
	\newcommand{\eit}{\end{itemize}}
\newcommand{\ben}{\begin{enumerate}}
	\newcommand{\een}{\end{enumerate}}
\newcommand{\blem}{\begin{lem}}
	\newcommand{\elem}{\end{lem}}
\newcommand{\bthm}{\begin{thm}}
	\newcommand{\ethm}{\end{thm}}
\newcommand{\bpf}{\begin{IEEEproof}}
	\newcommand{\epf}{\end{IEEEproof}}
\newcommand{\comment}[1]{}
\newcommand*\err{err}
\newcommand{\ye}{f_e}
\newcommand{\Ye}{\mathbf{F}}
\newcommand{\var}{\textbf{Var}}
\newcommand{\ex}{\mathbb{E}}
\newcommand\naive{na{\"i}ve}
\newcommand\rev[2]{#2}
\newcommand*\errr{\text{err}'}
\newcommand{\mgf}{\mathbb{M}}
\newcommand{\cmatrix}{\log \left(\frac{1-\theta}{\theta}\right)}
\begin{document}

\title{}

\titleformat{\subsubsection}[runin]{\normalfont}{\arabic{subsubsection})}{0.5ex}{}[:]

\title{Hypergraph Spectral Clustering\\
in the Weighted Stochastic Block Model}
\author{
	Kwangjun Ahn, Kangwook Lee, and Changho Suh
	\thanks{
		
		This work was supported by the National Research Foundation of Korea (NRF) grant funded by the Korea government (MSIT) (No. 2015R1C1A1A02036561).
			\rev{}{This paper was presented in part at the IEEE International Symposium on Information Theory 2017~\cite{Ahn1706:Information}.}
			
		Kwangjun Ahn is with the Department of Mathematical Sciences, KAIST (e-mail: kjahnkorea@kaist.ac.kr).		 
		
		Kangwook Lee and Changho Suh are with the School of Electrical Engineering, KAIST (e-mail: $\{$kw1jjang, chsuh$\}$@kaist.ac.kr).		
	
	}}

\date{}
\maketitle

\begin{abstract} 
	
	Spectral clustering is a celebrated algorithm that partitions objects based on pairwise similarity information. While this approach has been successfully applied to a variety of domains, it comes with limitations. The reason is that there are many other applications in which only \emph{multi}-way similarity measures are available. This motivates us to explore the multi-way measurement setting. In this work, we develop two algorithms intended for such setting:  Hypergraph Spectral Clustering (HSC) and Hypergraph Spectral Clustering with Local Refinement (HSCLR). 
	Our main contribution lies in performance analysis of the poly-time algorithms under a random hypergraph model, which we name the weighted stochastic block model, in which objects and multi-way measures are modeled as nodes and weights of hyperedges, respectively.
	Denoting by $n$ the number of nodes, our analysis reveals the following: (1) HSC outputs a partition which is better than a random guess if the sum of edge weights (to be explained later) is $\Omega(n)$; (2) HSC outputs a partition which coincides with the hidden partition except for a vanishing fraction of nodes if the sum of edge weights is $\omega(n)$; 
	and (3) HSCLR exactly recovers the hidden partition if the sum of edge weights is on the order of $n \log n$.
	Our results improve upon the state of the arts recently established under the model and  they firstly settle the order-wise optimal results \rev{}{for the binary edge weight case}. 
	Moreover, we show that our results lead to efficient sketching algorithms for subspace clustering, a computer vision application.  
	Lastly, we show that HSCLR achieves the information-theoretic limits for a special yet practically relevant model, thereby showing no computational barrier for the case.
	
\end{abstract} 

\section{Introduction}\label{sec:intro}
%\tba Graph problem has been addressed in the literature, but hypergraph has not, while it is very important.

\IEEEPARstart{T}{he} problem of clustering is prevalent in a variety of applications such as social network analysis, computer vision, and computational biology.
Among many clustering algorithms, \emph{spectral clustering} is one of the most prominent algorithms proposed by \cite{shi2000normalized} in the context of image segmentation, viewing an image as a graph of pixel nodes, connected by weighted edges representing visual similarities between two adjacent pixel nodes.
This approach has become popular, showing its wide applicability in numerous applications, and has been extensively analyzed under various models~\cite{mcsherry2001spectral, rohe2011spectral, lei2015consistency} . 

While  the standard spectral clustering relies upon interactions between \emph{pairs of two nodes}, there are many applications where interaction occurs across more than two nodes.
One such application includes a social network with online social communities, called folksonomies, in which users attach tags to resources. 
In the example, a three-way interaction occurs across users, resources and annotations~\cite{ghoshal2009random}.
Another application is molecular biology, in which multi-way interactions between distinct systems capture molecular interactions~\cite{michoel2012alignment}. 
See \cite{JMLR:v18:16-100} and the list of applications therein. 
Hence, one natural follow-up research direction is to extend the celebrated framework of \emph{graph} spectral clustering into a \emph{hypergraph} setting in which edges reflect multi-way interactions.

As an effort, in this work, we consider a random weighted uniform hypergraph model which  we call \emph{the weighted stochastic block model}, which is a special case of that considered in~\cite{JMLR:v18:16-100}. 
 An edge of size $d$ is \emph{homogeneous} if it consists of nodes from the same group, and is \emph{heterogeneous} otherwise.\footnote{\rev{}{While edges of a graph are pairs of nodes, edges of a hypergraph (or hyperedges) are arbitrary sets of nodes. Further, the size of an edge is the number of nodes contained in the edge.}}
Given a hidden partition of $n$ nodes into $k$ groups, a weight is   independently  assigned to each edge of size $d$ such that homogeneous edges tend to have higher weights than heterogeneous edges.  
More precisely, for some constants $p > q$,  the expectation of homogeneous edges' weights is $p\alpha_n$ and that of heterogeneous edges' weights is $q\alpha_n$.\footnote{For illustrative purpose, we focus on a symmetric setting. In Sec.~\ref{sec:dis}, we will extend our results (to be described later) to a more general setting.}
Here,  $\alpha_n$ captures the sparsity level of the weights, which may decay in $n$.
The task here is to recover the hidden partition from the weighted hypergraph. 
In particular, we aim to develop computationally efficient algorithms that provably find the hidden partition.

{\bf Our contributions:} 
By generalizing the spectral clustering algorithms proposed for the graph clustering, we first propose two   poly-time  algorithms  which  we name \emph{Hypergraph Spectral Clustering} (HSC) and \emph{Hypergraph Spectral Clustering with Local Refinement} (HSCLR).
 We then  analyze  their performances, 
 assuming that the size of hyperedges is $d$, the number of clusters $k$ is constant, and the size of each group is linear in $n$. 
 Our main results can be summarized as follows.
 For some constants $c$ and $c'$, which depend only on $p$, $q$, and $k$, the following statements hold with high probability: 
\begin{itemize}
	\item \emph{Detection:}  If $\binom{n}{d}\alpha_n \geq c\cdot n$, the output of HSC is more consistent with the hidden partition than a random guess; 
	\item \emph{Weak consistency:}  If $\binom{n}{d}\alpha_n =\omega(n)$, HSC outputs a partition which coincides with the hidden partition except $o(n)$ number of nodes; and
	\item \emph{Strong consistency:}  If $\binom{n}{d}\alpha_n\geq c'\cdot n\log n$, HSCLR exactly recovers the hidden partition.
\end{itemize}
We remark that our main results are the first order-wise optimal results \rev{}{for the binary edge weight case (see Proposition~\ref{optimal})}. 

 \begin{table}[t]
	\caption{Comparison to the state of the arts: ``Weak" and ``Strong'' are for consistency results.}
	\label{table1}
	\begin{center}
		\begin{scriptsize}
			\begin{tabular}{c||cc|ccc}
				\hline
				& \multicolumn{2}{c}{Model assumption} & \multicolumn{3}{c}{ Order of $\binom{n}{d}\alpha_n$ required for}   \\\hline  
				& \begin{tabular}{@{}c@{}}Multi\\Groups\end{tabular}  &\begin{tabular}{@{}c@{}}Weighted\\Edges\end{tabular} & Detection & Weak & Strong \\
				\hline
				\cite{ghoshdastidar2014consistency}    & $\surd$ & $\times$ & NA & $n^d$ &NA \\
				\cite{ghoshdastidar2015provable} & $\surd$& $\times$ & NA & $n^d$ &NA\\  
				\cite{ghoshdastidar2015consistency} & $\surd$& $\times$ & NA & $\Omega(n(\log n)^2)$ &NA\\ 
				\cite{JMLR:v18:16-100} & $\surd$& $\surd$ & NA & $\Omega(n(\log n)^2)$ & NA\\  
				\cite{florescu2015spectral}    & $\times$  &     $\times$ & $\Omega(n)$ & NA& NA \\ \hline
				\textbf{Ours}     &$\surd$&   $\surd$ & $\Omega(n)$ & $\omega(n)$ & $\Omega(n\log n)$\\
				\hline
			\end{tabular}
		\end{scriptsize}
	\end{center}
\vskip -0.2in
\end{table}

\subsection{Related work}\label{relatedwork}

%As for the sharp threshold for detection: Thanks much for pointing out the reference [Angelini, Caltagirone, Krzakala, Zdeborow{\'a} (2015)] which developed an efficient algorithm (based on non-backtracking matrix) that achieves the sharp threshold promised by BP. We agree that discussions on the "detection" setting are not detailed enough, and comparison to the prior works is not done properly. Actually, the current paper puts an emphasis on order-wise optimal threshold results that yet span a variety of recovery settings. But we do agree that the detection setting is of great importance, being relevant to many practical applications of interest; hence, in a revision, we will make sure to provide an in-depth discussion on this. As the reviewer may imagine, our contribution under the detection setting is to prove that spectral clustering with trimming can achieve non-trivial detection but with a possibly large constant factor gap to the optimality, as [Oghlan (2010)] did in the graph case. So one can view this result as a counterpart of [Oghlan (2010)] in the hypergraph setting. Indeed, we empirically observed that our algorithm is strictly outperformed by the advanced spectral clustering algorithm based on non-backtracking matrix, which is shown to match the threshold in [Angelini, Caltagirone, Krzakala, and Zdeborov{\’a} (2015)]. Again, in the revision, we will clarify this, including the performance comparison of our algorithm with the state of the arts.

\subsubsection{Graph Clustering}
The problem of standard graph clustering, i.e., $d=2$, has been studied in great generality.
Here, we summarize some major developments, referring the readers to a recent survey by Abbe~\cite{abbe2017community} for details.
The detection problem, whose goal is to find a partition that is more consistent with the hidden partition than a random guess, has received a wide attention.
A notable work by Decelle et al.~\cite{decelle} firstly observes phase transition and conjectures the transition limit. 
Further, they also conjecture that the computational gap exists for the case of $k \geq 4$. 
For the case of $k=2$, the phase transition limit is fully settled jointly by~\cite{mossel2015reconstruction} and \cite{Mas14, bordenave}: 
The impossibility of the detection below the conjectured threshold is established in~\cite{mossel2015reconstruction}, and it is proved that the conjectured threshold can be achieved via some efficient algorithms in~\cite{ Mas14,bordenave}.
The limits for the case $k \geq 3$ have been studied in~\cite{YC14, NN14, Mon15,BM16}, and are settled in~\cite{abbe2015detection}.

%While the detection problem aims at finding a cluster that is just better than a random guess, the weak/strong consistency problems aims at finding a cluster that is correct except a vanishing or zero fraction. 
The weak/strong consistency problem aims at finding a cluster that is correct except a vanishing or zero fraction. 
The necessary and sufficient conditions for weak consistency have been studied in~\cite{AL14,gao2015achieving,MNS14a,yun2014accurate,abbe2015community}, and those for strong consistency in~\cite{abbe2016exact,7523889,MNS14a,abbe2015community}.
In particular for strong consistency, both the fundamental limits and computationally efficient algorithms are investigated initially for $k=2$~\cite{abbe2016exact,7523889,MNS14a}, and recently for general $k$~\cite{abbe2015community}.
While most of the works assume that the graph parameters such as $p$, $q$, $k$, and the size of clusters are fixed, one can also study the minimax scenario where the graph parameters are adversarially chosen against the clustering algorithm. 
In \cite{zhang2016minimax}, the authors characterize the minimax-optimal rate.
Further, \cite{gao2015achieving} shows that the minimax-optimal rate can be achieved by an efficient algorithm.

\subsubsection{Hypergraph Clustering}
%While the clustering problem in graphs has been extensively studied, 
Compared to graph clustering, the study of hypergraph clustering is still in its infancy.
In this section, we briefly summarize recent developments.
For detection, analogous to the work by Decelle et al.~\cite{decelle}, Angelini et al.~\cite{angelini} firstly conjecture phase transition thresholds.
These conjectures have not been settled yet unlike the graph case. 
In~\cite{JMLR:v18:16-100}, the authors study a specific spectral clustering algorithm, which can be shown to detect the hidden cluster if $\binom{n}{d}\alpha_n = \Omega(n (\log n)^2)$, while the conjectured threshold for detection is $\binom{n}{d}\alpha_n = c^\star n$ for some constant $c^\star$. 
Actually, this gap is due to the technical challenge that is specific to the hypergraph clustering problem: See Remark~\ref{rmk2} for details.
In~\cite{florescu2015spectral}, the authors study the bipartite stochastic block model, and as a byproduct of their results, they show that detection is possible under some specific model if $\binom{n}{d}\alpha_n = \Omega(n)$.
While this guarantee is order-wise optimal, it holds only when edge weights are binary-valued and the size of two clusters are equal. 
Our detection guarantee, obtained by delicately resolving the technical challenges specific to hypergraphs, is also order-wise optimal but does not require such assumptions.

While several consistency results under various models are shown in~\cite{ghoshdastidar2014consistency,ghoshdastidar2015provable, ghoshdastidar2015consistency, JMLR:v18:16-100,florescu2015spectral}, to the best of our knowledge, our consistency guarantees are the first order-wise optimal ones.
We briefly overview the existing results below.
In~\cite{ghoshdastidar2014consistency, ghoshdastidar2015provable}, the authors derive consistency results for the case in which $\alpha_n=1$ and weights are binary-valued.
In~\cite{JMLR:v18:16-100}, the authors investigate consistency results of a certain spectral clustering algorithm under a fairly general
random hypergraph model, called the planted partition model in hypergraphs.
Indeed, our hypergraph model is a special case of the planted partition model, and hence the algorithm proposed in~\cite{JMLR:v18:16-100} can be applied to our model as well. 
One can show that their algorithm is weakly consistent if $\binom{n}{d}\alpha_n = \Omega(n(\log n)^{2})$ under our model.
The case of non-uniform hypergraphs, in which the size of edges may vary, is studied in~\cite{ghoshdastidar2015consistency}.
See Table~\ref{table1} for a summary.

While most of the existing works focus on analyzing the performance of certain clustering algorithms, some study the fundamental limits.
In~\cite{ahn2016community,Ahn1706:Information}, the information-theoretic limits are characterized for specific hypergraph models.
In~\cite{wangetal}, the minimax optimal rates of error fraction are derived for the binary weighted edge case. 
However, it has not been clear whether or not a computationally efficient algorithm can achieve such limits. 
In this work, we show that HSCLR achieves the fundamental limit for the model considered in~\cite{Ahn1706:Information}.

\subsubsection{Main innovation relative to \cite{Ahn1706:Information}}
\rev{}{
The new algorithms proposed in this work can be viewed as strict improvements over the algorithm proposed in our previous work~\cite{Ahn1706:Information}. First, the algorithm of~\cite{Ahn1706:Information} cannot handle the sparse-weight regime, i.e., $\binom{n}{d}\alpha_n=\Theta(n)$.
In order to address this, we employ a preprocessing step prior to the 
spectral clustering step. 
It turns out this can handle the sparse regime; see  Lemma~\ref{lem:spec} for details.}

\rev{}{Another limitation of the original algorithm is related to its refinement step (to be detailed later).
The original refinement step is tailored for a specific model, which assumes binary-valued weights and two clusters (see Definition~\ref{def:sc}).
On the other hand, our new refinement step can be applied to the general case with weighted edges and $k$ clusters.
Further, the original refinement step involves iterative updates, and this is solely because our old proof holds only with such iterations. 
However, we observe via experiments that a single refinement step is always sufficient. 
By integrating a well-known sample splitting technique into our algorithm, we are able to prove that a single refinement step is indeed sufficient. }

\rev{}{Apart from the improvements above, we also propose a sketching algorithm for subspace clustering based on our new algorithm, and we show  that it outperforms existing schemes in terms of sample complexity as well as computational complexity.}

\subsubsection{Computer vision applications}
The weighted stochastic block model that we consider herein is well-fitted into computer vision applications such as geometric grouping and subspace clustering~\cite{ govindu2005tensor, agarwal2006higher, chen2009spectral}. The goal of such problems is to cluster a union of groups of data points where points in the same group lie on a common low-dimensional affine space. In these applications, similarity between a fixed number of data points reflects how well the points can be approximated by a low-dimensional flat. By viewing these similarities as the weights of edges in a hypergraph, one can relate it to our model.
Note that edges connecting the data points from the same low-dimensional affine space have larger weights  compared to other edges: See Section~\ref{sec:appl} for detailed discussion.

\subsubsection{Connection with low-rank tensor completion} Our model bears strong resemblance to the low-rank tensor completion. To see this, consider the following model: for each $e = \{i_1,i_2,i_3\} \in \mathcal{E}$, edge weight of $e$ is generated as $W_e=pX_e$ (where $X_e\sim \bern(\alpha_n)$) if $(i_1,i_2,i_3)$ are from the same cluster; $W_e=qX_e$ otherwise.
This model generates a weighted hypergraph, whose weights are either $p,q$, or $0$. Now, view each weight as an observation of an entry of a hidden tensor $\mathbf{T}$, whose entries $\mathbf{T}_{i_1i_2i_3} =p $ if $(i_1,i_2,i_3)$ are from the same cluster; $\mathbf{T}_{i_1i_2i_3} =q$ otherwise. Here, $0$ weight indicates that the entry is ``unobserved''. Then, the knowledge of hidden partition will directly lead to ``completion'' of unobserved entries. This way, one can draw a parallel between hypergraph clustering and the low-rank tensor completion.\footnote{Here, $\mathbf{T}$ is of rank at most $k$ since it admits a CP-decomposition~\cite{hitchcock1927expression} $\mathbf{T}=q \mathbf{1}^{\otimes 3} + \sum_{i=1}^k (p-q) (\mathbf{Z}_{*i})^{\otimes 3}$.}  
This connection allows us to compare our results with the guarantee in the tensor completion literature. For instance, the sufficient condition for vanishing estimation error, i.e., weak consistency, derived in~\cite{barak2016noisy} reads $\binom{n}{d}\alpha_n = \omega(n^{3/2}\log^4 n)$, while ours reads $\binom{n}{d}\alpha_n = \omega(n)$. This favors our approach. Moreover, a more interesting implication arises in computational aspects. Notice that a \naive~lower bound for tensor completion is\footnote{The number of free parameters defining a rank $k$, $d$-th order, $n$-dimensional tensor is $ndk$, which scales like $\Theta(n)$ when $d$ and $k$ are fixed.} $\binom{n}{d}\alpha_n =\Omega(n)$, and the tensor completion guarantee comes with an additional $\Omega(n^{1/2})$ factor to the lower bound. 
%TODO: Check the statement below
Actually this gap has not been closed in the literature, raising a question whether this \emph{information-computation gap} is fundamental. Interestingly, this gap does not appear in our result, hence hypergraph clustering can shed new light on the computational aspects of tensor completion. 
Recently, a similar observation has been made independently in~\cite{kim2017community} for spike-tensor-related models (see Sec. 4.3. therein).

\subsection{Paper organization}
Sec.~\ref{sec:model} introduces the considered model; 
in Sec.~\ref{sec:main}, our main results are presented along with some implications;
in Sec.~\ref{sec:pf}, we provide the proofs of the main theorems;
in Sec.~\ref{sec:dis}, we discuss as to how our results can be extended and adapted to other models; Sec.~\ref{sec:appl} is devoted to practical applications relevant to our model, and presents the empirical performances of the proposed algorithms; 
and in Sec.~\ref{sec:concl}, we conclude the paper with some future research directions.

\subsection{Notations}
Let $\mathbf{M}_{i*}$ ($\mathbf{M}_{*j}$) be the $i$th row (the $j$th column) of matrix $\mathbf{M}$. For a positive integer $n$, $[n]:= \{1,2,\ldots, n\}$. For a set $A$ and an integer $m$, $\binom{A}{m} := \{B \subset A \,:\, |B|=m \}$.
Let $\log(\cdot)$ denote the natural logarithm. 
Let $\ind \{\cdot\}$ denote the indicator function.
For a function $F: \mathcal{A} \to \mathcal{B} $ and $b\in  \mathcal{B}$,  $F^{-1}(b) :=\{i\in \mathcal{A}~:~ F(i) =b \}$. 

\section{The weighted stochastic block model}\label{sec:model}

\rev{}{We first remark that our definition of the weighted SBM is a generalization of the original model for graphs~\cite{jog2015information,xu2017optimal} to a hypergraph setting.}
For simplicity, we will focus on the following symmetric assortative  model in this paper.
In Sec.~\ref{sec:dis}, we generalized our results to a broader class of graph models.

\subsubsection{Model} Let $\mathcal{V}=[n]$ be the indices of $n$ nodes, and $\mathcal{E} :=\binom{[n]}{d}$ be the set of all possible edges of size $d$ for a fixed integer $d\geq 2$. 
Let $\Psi : \mathcal{V} \to [k]$ be the hidden partition function that maps $n$ nodes into $k$ groups for a fixed integer $k$. Equivalently, the membership function can be represented in a matrix form $\mathbf{Z} \in \{0,1\}^{n\times k}$, which we call \emph{the membership matrix}, whose $(i,j)$th entry takes $1$ if $j=\Psi(i)$ and $0$ otherwise. 
We denote by $n_i$ the size of   the  $j$th group for $j=1,2,\ldots, k$, i.e.,  $n_j := |\Psi^{-1}(j)|$. Let $n_{\min}:= \min_j n_j$ and $n_{\max}:= \max_j n_j$.
An edge $e=\{i_1,\ldots, i_d\}$ is \emph{homogeneous} if $\Psi(i_1)=\Psi(i_2)=\cdots = \Psi(i_d)$ and \emph{heterogeneous} otherwise.
We now formally define the weighted SBM.

\begin{definition}[The weighted SBM$(p,q,\alpha_n)$] \label{def:sSBM}
A random weight $W_e\in [0,1]$ is assigned to each edge $e$ independently\footnote{\rev{}{Our results hold as long as the weights are upper bounded by any fixed positive constant since one can always normalize the edge weights such that they are within $[0, 1]$. 
		The global upper bound on the edge weights are required for deriving our large deviation results (Lemmas~\ref{largedev} and \ref{machinery}) in the proof.}}: 
for homogeneous edges, $\ex[W_e] = p\alpha_n$; and 
for heterogeneous edges, $\ex[W_e] = q\alpha_n$.
\end{definition}
Note that the weighted SBM does not assume a specific edge weight distribution but only specifies the expected values. 
For instance, it can capture the case with a single location family distribution with different parameters as well as the case with two completely different weight distributions.

\begin{example}[The unweighted hypergraph case]
For homogeneous edges, $W_e \sim \bern(p \alpha_n)$; and for heterogeneous edges, $W_e \sim \bern(q \alpha_n)$.
This is an instance of the weighted SBM$(p, q, \alpha_n)$. 
When $d=2$, it captures the standard models such as planted multisection~\cite{mcsherry2001spectral} and the SBM~\cite{holland1983stochastic}.
\end{example}

\begin{example}[The weighted hypergraph case]
For homogeneous edges, $W_e \sim \bern(0.75)$; and for heterogeneous edges, $W_e \sim {\sf Unif}[0,1]$, a uniform distribution on $[0,1]$. 
This model can be seen as an instance of the weighted SBM$(0.75, 0.5, 1)$. 
\end{example}

%We remark that the weighted SBM includes (i) the case with two completely different weight distributions and (ii) the case with a single location family distribution. 

\subsubsection{Performance metric}
Given $\{W_e\}_{e\in \mathcal{E}}$ \rev{}{and the number of clusters $k$}, we intend to recover a hidden partition $\Psi$ up to a permutation.
Formally, for any estimator $\Phi : [n] \to [k]$, we define the error fraction as $\err(\Phi):= \frac{1}{n}\min_{\Pi\in \mathcal{P}} |\{i:~ \Psi(i) \neq \Pi(\Phi(i)) \}|$, where $\mathcal{P}$ is the collection of all permutations of $[k]$.
We study three types of consistency guarantees~\cite{mossel2015consistency,abbe2017community}.
\begin{definition}[Recovery types] 
	\label{def:recovery}
	An estimator $\Phi$ is
	\begin{itemize}
	 \item  \emph{strongly consistent} if $\lim_{n\to \infty}\Pr(\err(\Phi)=0) = 1$;
	 \item \emph{weakly consistent} if  $\lim_{n\to \infty}\err(\Phi)= 0$ in prob.; and
	 \item is solving \emph{detection} if it outputs a partition which is more consistent relative to a random guess.\footnote{Here we provide an informal definition for simplicity. See Definition 7 in~\cite{abbe2017community} for the formal definition.}
	 \end{itemize}
\end{definition}

\section{Main results}\label{sec:main}

\subsection{Hypergraph Spectral Clustering} \label{hsc}

Hypergraph Spectral Clustering (HSC) is built upon the spectral relaxation technique~\cite{ghoshdastidar2015provable} and the spectral algorithms~\cite{feige2005spectral, coja2010graph,vu2014simple, guedon2016community,gao2015achieving,yun2014accurate,lei2015consistency}.
The first step of the algorithm is to {\bf compute the processed similarity matrix} whose entries represent similarities between pairs. To this end, we first compute \emph{the similarity matrix} $\mathbf{A}$, where $\mathbf{A}_{ij}=\sum_{ e:~\{i,j\}\subset e} W_e$ if $i\neq j$; $\mathbf{A}_{ij}=0$ if $i= j$.
This is inspired by the spectral relaxation technique in~\cite{ghoshdastidar2015provable}.
Next, we zero-out every row and column whose sum  is larger than  a certain threshold, constructing an output $\Az$, which we call \emph{the processed similarity matrix}. We then apply {\bf spectral clustering} to the processed similarity matrix. That is, we first find the $k$ largest eigenvectors  $\Uz \in \mathbb{R}^{n\times k}$ of $\Az$, and cluster $n$ rows of $\Uz$ using the approximate geometric $k$-clustering~\cite{matouvsek2000approximate}. 
Note that HSC is non-parametric, i.e., it does not require the knowledge of model parameters.
See Alg.~\ref{alg1} for the detailed procedure.

\rev{}{
	\begin{remark}
	The zeroing-out procedure, proposed in \cite{feige2005spectral} (see Sec.~3 therein), is used to remove outlier rows whose sums are much larger than the average.
	This is necessary since if such outliers exist, the eigenvector estimate will be biased, and hence the spectral clustering will also fail. 
	Note that this technique is widely adopted in various graph clustering algorithms~\cite{coja2010graph,chin2015stochastic,yun2014accurate}.
	\end{remark}}

\rev{}{The time complexity of HSC is $O\left(n^d\right)$. As each edge appears $2\binom{d}{2}$ times during the construction of the similarity matrix, this step requires $2\binom{d}{2}|\mathcal{E}|= O\left(n^d\right)$ time. The first $k$ eigenvectors can be computed via power iterations, which can be done within $O(kn^2\log n)$ time~\cite{boutsidis2015spectral}. Geometric $k$-clustering can be done in time $O(n (\log n)^k)$~\cite{matouvsek2000approximate}.}

\begin{algorithm}[tb]

	\caption{HSC}\label{alg1}
	\begin{algorithmic}[1]
		\State {\bfseries Input}: A weighted hypergraph $\mathcal{H} = ([n],\{W_e\}_{e\in \mathcal{E}})$, \rev{}{the number of clusters $k$}.
		\State {\bfseries Compute the processed  similarity matrix $\Az$}: Compute the similarity matrix $\mathbf{A}$ where $\mathbf{A}_{ij} = \sum_{ e:~\{i,j\}\subset e} W_e$ if $i
		\neq j$; and $\mathbf{A}_{ij} =0$ otherwise.
		%		as pers
		%		\begin{align}
		%		\mathbf{A}_{ij} =\begin{cases}\sum_{ e~:~\{i,j\}\subset e} W_e &\text{if}~i\neq j;\\
		%		0 &\text{if}~i= j.\end{cases} \label{construct}
		%		\end{align}
		Then, obtain $\Az$ by zeroing-out row $i$ (and the corresponding column) if $\sum_{j}\mathbf{A}_{ij} > \ctt \frac{1}{n}\sum_{i,j}\mathbf{A}_{ij}$, where $\ctt>0$ is a constant  depending only on $d$ (e.g., $\ctt=6$ when $d=2$).
		\State {\bfseries Apply spectral clustering to $\Az$}: Find $k$ largest eigenvectors of $\Az$, stack them side by side to obtain $\Uz \in \mathbb{R}^{n\times k}$, and cluster the rows of $\Uz$ using the approximate geometric $k$-clustering~\cite{matouvsek2000approximate} with an approximation rate $\epsilon>0$. 	
		\State {\bfseries Output:} $\phsc(i)=$ cluster index of the $i$th row.
	\end{algorithmic}
\end{algorithm}

\subsection{Hypergraph Spectral Clustering with Local Refinement}

Our second algorithm consists of two stages: HSC and local refinement.
The HSCLR algorithm is inspired by a similar refinement procedure, which has been proposed for the graph case~\cite{abbe2016exact,abbe2015community}.
The algorithm begins with randomly splitting edges into two sets $\mathcal{E}_1$  and $\mathcal{E}_2$.~For small $\beta>0$, we assign each edge to $\mathcal{E}_1$ independently with probability $\beta$. $\mathcal{E}_2$  is the complement of $\mathcal{E}_1$.
Then, we run HSC  on $\mathcal{H}_1 = ([n], \{W_e\}_{e\in\mathcal{E}_1})$. 
 Next, we do local refinement with $\mathcal{E}_2$. 
For $i\in [n]$ and $j\in [k] $, define $\nei (j)$ to be the set of edges ( $\in\mathcal{E}_2$)  which connect node $i$ with $d-1$ nodes from $\phsc^{-1}(j)$, i.e.,  $\nei(j):=\left\{e \in \mathcal{E}_2~:~ i\in e,~ (e\setminus \{i\})\subset \phsc^{-1}(j)   \right\}$.
Then, for each $i\in [n]$, we update $\phsc(i)$ with
\begin{align}
\arg\max_{j\in [k]} \frac{1}{|\nei (j)|} \sum_{e\in \nei (j)} W_e. \label{refinerule}
\end{align}
That is, the refinement step first measures the \emph{fitness} of each node with respect to  different  clusters, and updates the cluster assignment of each node accordingly.
Note that HSCLR is also non-parametric.
 See Alg.~\ref{alg2} for the detailed procedure.

\rev{}{The time complexity of HSCLR is $O\left(n^d \right)$. For each node $i$, the local refinement requires $\sum_{j=1}^{k}|\nei(j)|$ flops, which is bounded by $k|\mathcal{E}_i|$, where $|\mathcal{E}_i|$ is the number of edges containing node $i$.
As $\sum_i |\mathcal{E}_i| = d |\mathcal{E}|$, the local refinement step can be done within $O\left(|\mathcal{E}| \right)$ time.  }
\begin{remark}
	HSCLR is inspired by the recent paradigm of solving non-convex problems,  which first approximately estimates the solution, followed by some local refinement. 
	This two-stage approach has been applied to a variety of contexts, including matrix completion~\cite{keshavan2010matrix,jain2013low}, phase retrieval~\cite{netrapalli2013phase,candes2015phase}, robust PCA~\cite{yi2016fast}, community recovery~\cite{abbe2016exact,chen2016community}, EM-algorithm~\cite{balakrishnan2017}, and rank aggregation~\cite{chen2015spectral}.
\end{remark}

\begin{algorithm}[tb]
	\caption{HSCLR}\label{alg2}
	\begin{algorithmic}[1]
		\State {\bfseries Input}: A weighted hypergraph $\mathcal{H} = ([n],\{W_e\}_{e\in \mathcal{E}})$, \rev{}{the number of clusters $k$}, and sample splitting rate $\beta>0$.
		\State Randomly split $\mathcal{E}$: for small enough $\beta>0$, include each edge of $\mathcal{E}$ in $\mathcal{E}_1$ independently with probability $\beta$. Denote by $\mathcal{E}_2$ the complement of $\mathcal{E}_1$.
		\State {\bfseries Apply Hypergraph Spectral Clustering to $\mathcal{H}_1 = ([n], \{W_e\}_{e\in\mathcal{E}_1})$} to yield an estimate $\phsc$.
		\State {\bf Local refinement}: for $i=1,2,\ldots, n$, $\phsclr(i) = \arg\max_{j\in [k]} \frac{1}{|\nei (j)|} \sum_{e\in \nei (j)} W_e$.
		\State {\bfseries Output}: $\phsclr$.
	\end{algorithmic}
\end{algorithm}

\subsection{Theoretical guarantees}
\begin{theorem} \label{thm:main}
Let $\phsc$ be the output of $HSC$.
Suppose that $\frac{n_{\max}}{n_{\min}}=O(1)$. 
Then, there exist constants $\cweakone,\cweaktwo>0$ (where $\cweaktwo$ depends on $p$ and $q$) such that if $\binom{n}{d}\alpha_n  \geq \cweakone n$,
	then,
	\begin{align}\label{eq:thm1}
		\err(\phsc) \leq \cweaktwo k\frac{n^3}{d(d-1)n_{\min}^2 \binom{n}{d}\alpha_n }
	\end{align}
	w.p. $1-O(n^{-1})$, provided that $\cweaktwo k\frac{n^3}{d(d-1)n_{\min}^2 \binom{n}{d}\alpha_n }<1$. 
\end{theorem}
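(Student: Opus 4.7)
My plan is to follow a three-step spectral-clustering analysis: (i) analyze the population matrix $M:=\mathbb{E}[\mathbf{A}]$ and its top-$k$ eigenspace; (ii) invoke Lemma~\ref{lem:spec} for a sharp concentration bound on $\|\Az-M\|$; and (iii) convert the resulting subspace perturbation into an error-fraction bound by exploiting the geometric separation of the rows of the population eigenvectors together with the guarantee of the approximate geometric $k$-clustering.

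For step (i), a direct calculation shows that for $i\neq j$ in the same cluster $c$, $M_{ij}=p\alpha_n\binom{n_c-2}{d-2}+q\alpha_n\bigl(\binom{n-2}{d-2}-\binom{n_c-2}{d-2}\bigr)$, while for $i,j$ in different clusters, $M_{ij}=q\alpha_n\binom{n-2}{d-2}$. Thus $M$ decomposes into a rank-one ``background'' piece plus the rank-$k$ block-constant piece $(p-q)\alpha_n\,\mathbf{Z}\,\mathrm{diag}\bigl(\binom{n_c-2}{d-2}\bigr)\mathbf{Z}^{\top}$ (up to diagonal corrections), hence has rank at most $k$. A calculation then gives $\sigma_k(M)\gtrsim (p-q)\alpha_n\binom{n-2}{d-2}n_{\min}$; moreover, if $U^{*}\in\mathbb{R}^{n\times k}$ collects its top-$k$ eigenvectors, then $U^{*}$ has only $k$ distinct rows (one per cluster), any two of which are separated by at least $c/\sqrt{n_{\max}}$, with $c$ depending only on $p,q,k$ and the size ratio $n_{\max}/n_{\min}$.

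Step (ii) is the main obstacle. In the sparse regime $\binom{n}{d}\alpha_n=\Theta(n)$, the untrimmed similarity matrix $\mathbf{A}$ does not concentrate around $M$ in spectral norm, because nodes that happen to lie on an unusually large number of heavy edges create high-weight rows whose contribution dominates the operator norm. This is precisely why HSC zeroes out the atypical rows before taking eigenvectors, à la Feige-Ofek. Lemma~\ref{lem:spec} is designed to deliver the required bound: it yields $\|\Az-M\|\leq C\sqrt{\binom{n-2}{d-2}\alpha_n\cdot n}$ with probability $1-O(n^{-1})$ whenever $\binom{n}{d}\alpha_n\geq \cweakone n$. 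Establishing this from scratch for a weighted \emph{hypergraph} similarity matrix, whose entries are dependent sums of edge weights, is the technical core of the argument.

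With the two ingredients in hand, step (iii) is essentially standard. The Davis-Kahan $\sin\Theta$ theorem furnishes an orthogonal $O\in\mathbb{R}^{k\times k}$ with $\|U^{0}O-U^{*}\|_{F}\leq C\sqrt{k}\,\|\Az-M\|/\sigma_{k}(M)$. A node $i$ is misclassified only if, after the $(1+\epsilon)$-approximate geometric $k$-clustering, the $i$th row of $U^{0}O$ lies farther than half the inter-row separation from its true center, so a Markov-type count gives
\begin{equation*}
\err(\phsc)\;\leq\;\frac{C'\,n_{\max}\,k}{n}\cdot\frac{\|\Az-M\|^{2}}{\sigma_{k}(M)^{2}}\;\leq\;\frac{C''\,k\,n_{\max}}{(p-q)^{2}\,\alpha_n\,\binom{n-2}{d-2}\,n_{\min}^{2}}.
\end{equation*}
Finally, using $\binom{n-2}{d-2}/\binom{n}{d}=d(d-1)/(n(n-1))$ together with $n_{\max}=\Theta(n)$, this rearranges to the claimed bound $\cweaktwo k n^{3}/\bigl(d(d-1)\,n_{\min}^{2}\,\binom{n}{d}\alpha_n\bigr)$, with $\cweaktwo$ depending on $p,q$ only through the factor $1/(p-q)^{2}$.
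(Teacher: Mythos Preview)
Your proposal is correct and follows essentially the same route as the paper: a population analysis yielding $\sigma_k\gtrsim (p-q)\alpha_n\binom{n-2}{d-2}n_{\min}$, the concentration bound of Lemma~\ref{lem:spec} for $\|\Az-\mathbb{E}[\mathbf{A}]\|$, and then a Davis--Kahan plus approximate-$k$-means argument to convert subspace perturbation into an error-fraction bound. The only cosmetic difference is that the paper packages your step~(iii) as a black-box lemma (Lemma~\ref{lem:sc}, quoted from Lei--Rinaldo) rather than writing out the Davis--Kahan/row-separation/Markov count explicitly, and it works with the exactly rank-$k$ matrix $\mathbf{P}=\mathbf{Z}\mathbf{B}\mathbf{Z}^\top$ (which agrees with your $M=\mathbb{E}[\mathbf{A}]$ off the diagonal) rather than $M$ itself.
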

\begin{IEEEproof}
	See Sec.~\ref{pf:thm1}.
\end{IEEEproof}

\rev{}{Note that when $d=2$, Thm.~\ref{thm:main} recovers \cite[Thm.~6]{gao2015achieving}.}

	\begin{remark}
		We remark a technical challenge that arises in proving Thm.~\ref{thm:main} relative to the graph case. Actually, the key step in the proof is to derive the sharp concentration bound on a certain matrix spectral norm (to be detailed later). But the bounding technique employed in the graph case does not carry over to the hypergraph case, as the matrix has strong dependencies across entries. We address this challenge by developing a delicate analysis that carefully handles such dependencies. See Remark~\ref{rmk2} in Sec.~\ref{sec:pf} for details.
	\end{remark}

\begin{corollary}[Detection]
	Suppose that $\frac{n_{\max}}{n_{\min}}=O(1)$. There exists a constant $\cdetec$ depending on $p,q$ and $k$ such that HSC solves detection if $\binom{n}{d}\alpha_n \geq \cdetec \cdot n$.
\end{corollary}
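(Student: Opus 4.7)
\begin{IEEEproof}[Proof plan for the Detection Corollary]
The plan is to derive this as a direct consequence of Thm.~\ref{thm:main} by choosing $\cdetec$ large enough so that the error bound of~\eqref{eq:thm1} is driven strictly below the random-guess error $\frac{k-1}{k}$ by a positive constant margin, which is the standard meaning of detection for $k$ balanced clusters. Concretely, detection requires (cf.\ Definition 7 in~\cite{abbe2017community}) that with high probability $\err(\phsc) \leq \frac{k-1}{k} - \epsilon$ for some constant $\epsilon>0$ that is independent of $n$; equivalently, the fraction of correctly labeled nodes strictly exceeds $1/k$ by a constant.

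First, I would use the balance assumption $n_{\max}/n_{\min}=O(1)$. Combined with $\sum_{j=1}^{k}n_j=n$ and the fact that $k$ is a fixed constant, this yields $n_{\min}\geq \tilde{c}\,n$ for some constant $\tilde{c}>0$ depending only on $k$ and the hidden $O(1)$ constant. Plugging this into the bound~\eqref{eq:thm1} of Thm.~\ref{thm:main}, I would obtain
\begin{equation}
\err(\phsc) \;\leq\; \cweaktwo\, k\,\frac{n^3}{d(d-1)\,n_{\min}^2\,\binom{n}{d}\alpha_n} \;\leq\; \frac{C\,n}{\binom{n}{d}\alpha_n},
\end{equation}
where $C := \cweaktwo\,k/(d(d-1)\,\tilde{c}^{\,2})$ is a constant depending only on $p,q,k$ (with $d$ fixed).

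Next, I would pick any constant $\epsilon\in(0,(k-1)/k)$ and set
\begin{equation}
\cdetec \;:=\; \max\!\left\{\cweakone,\ \frac{C}{\tfrac{k-1}{k}-\epsilon}\right\}.
\end{equation}
Then whenever $\binom{n}{d}\alpha_n\geq \cdetec\cdot n$, the hypothesis of Thm.~\ref{thm:main} is met (since $\cdetec\geq \cweakone$), and the resulting bound gives
\begin{equation}
\err(\phsc) \;\leq\; \frac{C}{\cdetec} \;\leq\; \frac{k-1}{k} - \epsilon
\end{equation}
with probability $1-O(n^{-1})$. This is precisely detection, and the claim follows. Since the argument is a short quantitative chase, I do not anticipate any real obstacle beyond being careful that $\cdetec$ is taken at least as large as $\cweakone$ so that Thm.~\ref{thm:main} is applicable, and that $\tilde{c}$ (hence $C$) truly depends only on $k$ and the balance constant, not on $n$.
\end{IEEEproof}
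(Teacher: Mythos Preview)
Your proposal is correct and follows essentially the same approach as the paper: the paper's one-line proof simply notes that $\err(\phsc)=O(1/c)$ whenever $\binom{n}{d}\alpha_n\geq cn$, which is exactly the inequality $\err(\phsc)\leq Cn/\binom{n}{d}\alpha_n$ you derive before comparing to the random-guess threshold. Your version is more carefully quantified (making $n_{\min}\geq \tilde{c}\,n$ explicit and ensuring $\cdetec\geq \cweakone$ so that Thm.~\ref{thm:main} and its proviso apply), but the argument is the same.
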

\begin{IEEEproof}
	In Thm.~\ref{thm:main}, $\err(\phsc)=O(1/c)$  when $\alpha_n$ satisfies $\binom{n}{d}\alpha_n\geq cn$ for sufficiently large $c>0$.
\end{IEEEproof}
\begin{remark}
	 We compare our algorithm to the one proposed in \cite{angelini}.
		To compare, we first note that in the graph case, the threshold for detection~\cite{decelle} is achieved by new methods based on the \emph{non-backtracking operator}~\cite{Krzakala24122013,bordenave,abbe2015detection}. 
		In~\cite{Krzakala24122013}, the spectral analysis based on a plain adjacency matrix is shown to fail, while the one based on the non-backtracking operator succeeds.
		Recently, it is shown that the non-backtracking based approach can be extended to the hypergraph case, and it is empirically observed to outperform a spectral method that is similar to HSC except the preprocessing step~\cite{angelini}.
\end{remark}

\begin{corollary}[Weak consistency] \label{weakc}	Suppose that $\frac{n_{\max}}{n_{\min}}=O(1)$.  HSC is weakly consistent if $\binom{n}{d}\alpha_n = \omega(n)$.
\end{corollary}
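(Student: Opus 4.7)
The plan is to derive this corollary directly from Theorem~\ref{thm:main} by substituting the asymptotic hypotheses into the explicit error bound. First, since $\binom{n}{d}\alpha_n = \omega(n)$, for every fixed constant $\cweakone$ (in particular the one supplied by Theorem~\ref{thm:main}) we have $\binom{n}{d}\alpha_n \geq \cweakone\, n$ for all sufficiently large $n$. Hence the hypothesis of Theorem~\ref{thm:main} is eventually satisfied, and with probability $1 - O(n^{-1})$ the output of HSC obeys
\begin{equation}
\err(\phsc) \;\leq\; \cweaktwo\, k\,\frac{n^3}{d(d-1)\,n_{\min}^2\,\binom{n}{d}\alpha_n}.
\end{equation}

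Next, I would use the balanced-cluster assumption $n_{\max}/n_{\min} = O(1)$ together with $\sum_{j=1}^{k} n_j = n$ and the fact that $k$ is a fixed constant to conclude that $n_{\min} = \Theta(n)$; indeed, $n = \sum_j n_j \leq k\, n_{\max} = O(n_{\min})$. Since $d$ and $k$ are also constants, this yields $n^3/(d(d-1) n_{\min}^2) = \Theta(n)$, so the bound above simplifies to
\begin{equation}
\err(\phsc) \;=\; O\!\left(\frac{n}{\binom{n}{d}\alpha_n}\right) \;=\; o(1),
\end{equation}
using the hypothesis $\binom{n}{d}\alpha_n = \omega(n)$. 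In particular, the proviso in Theorem~\ref{thm:main} that the right-hand side be strictly less than $1$ is automatically satisfied for all large $n$.

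Finally, combining the deterministic bound $\err(\phsc) = o(1)$ on an event of probability $1 - O(n^{-1}) = 1 - o(1)$ gives $\err(\phsc) \to 0$ in probability, which is exactly weak consistency in the sense of Definition~\ref{def:recovery}. There is essentially no obstacle in this argument: it is a direct substitution, and the only subtle point is verifying that the balanced-cluster assumption together with $k = \Theta(1)$ forces $n_{\min} = \Theta(n)$, which collapses the seemingly quadratic factor $n^3/n_{\min}^2$ down to a linear one and lets the weak-consistency hypothesis $\omega(n)$ take over.
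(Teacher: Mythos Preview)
Your proposal is correct and follows essentially the same approach as the paper: the paper's proof is a one-line invocation of the bound~\eqref{eq:thm1} from Theorem~\ref{thm:main}, observing that $\binom{n}{d}\alpha_n = \omega(n)$ forces $\err(\phsc)=o(1)$. You have simply spelled out the intermediate steps (verifying the hypothesis $\binom{n}{d}\alpha_n \geq \cweakone n$, using $n_{\max}/n_{\min}=O(1)$ with constant $k$ to get $n_{\min}=\Theta(n)$, and passing to convergence in probability), all of which are implicit in the paper's terse argument.
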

\begin{IEEEproof}
By~\eqref{eq:thm1}, $\binom{n}{d}\alpha_n= \omega(n) \Rightarrow \err(\phsc)=o(1)$.
\end{IEEEproof}
 
\begin{remark}
When specialized to weighted stochastic block model, the weak consistency guarantee of~\cite{JMLR:v18:16-100} becomes $\binom{n}{d}\alpha_n = \Omega(n(\log n)^{2})$, which comes with an extra poly-logarithmic factor gap to ours.
\end{remark}

The following theorem provides the theoretical guarantee of HSCLR. See Sec.~\ref{pf:thm2} for the proof.

\begin{theorem}[Strong consistency]\label{thm:main2}
		Suppose that $\frac{n_{\max}}{n_{\min}}=O(1)$. Then, HSCLR with sampling rate\footnote{\rev{}{We note that $\beta$ can be chosen arbitrarily as long as $\beta = o(1)$ and $\beta = \omega (1/ \log n)$. See Sec.~\ref{pf:thm2} for detail.}} $\beta = \frac{\log\log n}{\log n}$ is strongly consistent provided that  for any $\epsilon>0$, \begin{align}
		\frac{(p-q)^2}{p}\binom{n}{d}\alpha_n \geq (8 +\epsilon)\frac{\left(n/n_{\min}\right)^{d-1}}{d} n\log n\,.
		\end{align}
		
	\end{theorem}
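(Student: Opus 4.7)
The plan is to analyze HSCLR's two stages separately, exploiting sample splitting to decouple them. In the first stage, HSC runs on $\mathcal{H}_1=([n],\{W_e\}_{e\in\mathcal{E}_1})$, which is itself a weighted SBM with sparsity parameter $\beta\alpha_n$. Because $\beta=\omega(1/\log n)$ and $\binom{n}{d}\alpha_n=\Omega(n\log n)$, the sampled hypergraph satisfies $\binom{n}{d}\beta\alpha_n=\omega(n)$, so Corollary~\ref{weakc} gives $\err(\phsc)=o(1)$ with high probability. Relabeling clusters by the optimal permutation, this means $|\phsc^{-1}(j)\triangle\Psi^{-1}(j)|=o(n)$ for every $j$. (The choice $\beta=\log\log n/\log n$ is for concreteness; any $\beta$ with $\beta=o(1)$ and $\beta=\omega(1/\log n)$ would work.)

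For the refinement stage, the crucial point is that, conditioned on the edge-partition and on $\{W_e\}_{e\in\mathcal{E}_1}$, the weights $\{W_e\}_{e\in\mathcal{E}_2}$ are independent of $\phsc$ and keep their original distributions. Fix a node $i$ with $\Psi(i)=j^*$ and an alternative label $j\neq j^*$. Among the roughly $(1-\beta)\binom{n_{j^*}-1}{d-1}$ edges in $\nei(j^*)$, all but a $o(1)$ fraction are genuinely homogeneous, because at most $o(n)$ nodes of $\phsc^{-1}(j^*)$ are misclassified and $d$ is constant. Hence
\begin{align*}
\ex\!\left[\tfrac{1}{|\nei(j^*)|}\sum_{e\in\nei(j^*)}W_e\,\Big|\,\phsc\right] &= p\alpha_n - o(\alpha_n),\\
\ex\!\left[\tfrac{1}{|\nei(j)|}\sum_{e\in\nei(j)}W_e\,\Big|\,\phsc\right] &= q\alpha_n + o(\alpha_n),
\end{align*}
leaving a mean gap of $(p-q)\alpha_n\cdot(1-o(1))$.

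A Bernstein bound then controls the probability that the two sample means swap order. Using $\Var(W_e)\le\ex[W_e]\le p\alpha_n$ and the midpoint of the two expected means as the threshold, each one-sided deviation is at most
\begin{align*}
\exp\!\left(-\tfrac{N(p-q)^2\alpha_n}{8p}(1-o(1))\right),
\end{align*}
where $N=(1-o(1))\binom{n_{\min}}{d-1}$ lower-bounds both $|\nei(j^*)|$ and $|\nei(j)|$. Substituting $\binom{n_{\min}}{d-1}\approx n_{\min}^{d-1}/(d-1)!$ and $\binom{n}{d}\alpha_n\approx n^d\alpha_n/d!$, the hypothesis $\tfrac{(p-q)^2}{p}\binom{n}{d}\alpha_n\ge(8+\epsilon)\tfrac{(n/n_{\min})^{d-1}}{d}n\log n$ makes the exponent at least $(1+\delta)\log n$ for some $\delta>0$ and all large $n$, giving $o(1/n)$ per comparison. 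A union bound over $n$ nodes and $k-1$ rival labels then yields $\phsclr=\Psi$ with high probability.

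The main obstacle is obtaining the sharp constant $8$. This requires (i) carrying the $(1-\beta)$ sampling loss without inflating the effective sample size, hence the choice $\beta=o(1)$; (ii) translating the global weak-consistency guarantee $\err(\phsc)=o(1)$ into a local per-node bound so the initialization bias contributes only a $(1-o(1))$ multiplicative factor inside the exponent rather than a constant-factor loss; and (iii) invoking Bernstein with the tight variance proxy $p\alpha_n$ instead of a cruder $O(1)$ bound, so that the quadratic regime $t^2/(2\sigma^2)$ produces the denominator $8p$ exactly. The more delicate subtype distinction—e.g.\ that when $i\in\phsc^{-1}(j^*)$ we have one fewer neighbor available, and that different rival $j$'s may have slightly different $|\nei(j)|$—only contributes lower-order terms absorbed into the $(1-o(1))$ factor.
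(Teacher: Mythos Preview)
Your overall architecture matches the paper's proof exactly: sample splitting, weak consistency of HSC on $\mathcal{E}_1$ via Corollary~\ref{weakc}, a midpoint threshold $(p+q)\alpha_n/2$ to reduce the misclassification event to two one-sided deviations, a concentration bound for each, and a union bound over $n(k-1)$ comparisons. The handling of the $o(1)$ misclassified fraction as a $(1-o(1))$ multiplicative perturbation in the exponent is also the paper's move (the paper isolates the ``bad'' edges explicitly via Lemma~\ref{marginal}, but this is equivalent).

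The gap is in the concentration step. You claim Bernstein's inequality operates in the quadratic regime $t^2/(2\sigma^2)$ and therefore yields the denominator $8p$. But with $t\asymp \tfrac{p-q}{2}N\alpha_n$, $\sigma^2\le Np\alpha_n$, and range $M=1$ (since $W_e\in[0,1]$), the linear term $Mt/3\asymp \tfrac{p-q}{6}N\alpha_n$ is of the \emph{same order} as $\sigma^2$; you are not in the sub-Gaussian regime. Bernstein then gives an exponent of at most $\tfrac{(p-q)^2N\alpha_n}{8p+4(p-q)/3}$, which is strictly worse than $\tfrac{(p-q)^2N\alpha_n}{8p}$ and does not recover the threshold constant $8+\epsilon$. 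Using the ``tight variance proxy $p\alpha_n$'' does not help: the obstruction is $Mt$, not $\sigma^2$.

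The paper avoids this by using a multiplicative Chernoff bound for $[0,1]$-valued summands (Lemma~\ref{largedev}): $\Pr(S<(1-\delta)\ex[S])\le\exp(-\tfrac{\delta^2}{2}\ex[S])$ and $\Pr(S>(1+\delta)\ex[S])\le\exp(-\tfrac{\delta^2}{2+\delta}\ex[S])$. Applying the lower-tail bound to $R_1$ with $\delta=\tfrac{p-q}{2p}$ and $\ex[S]\approx Np\alpha_n$ gives exactly $\tfrac{(p-q)^2}{8p}N\alpha_n$; for the upper tail $R_2$ one uses the heterogeneous mean $q\alpha_n$, $\delta=\tfrac{p-q}{2q}$, and the algebraic inequality $\tfrac{1}{2+\delta}\ge \tfrac{q}{2p}$ to land on the same $8p$. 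If you want to stick with Bernstein, you would need the one-sided range $M=p\alpha_n$ for the lower tail (since $-(W_e-\ex W_e)\le p\alpha_n$) and the sharper variance bound $\sigma^2\le Nq\alpha_n$ for the upper tail; as written, your argument does not deliver the constant the theorem claims.
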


\rev{}{\begin{remark}
		We remark that Thm.~\ref{thm:main2} characterizes the performance of our \emph{non-parametric} algorithm for any hypergraphs with (bounded) \emph{real-valued} weights. 
		Hence, one may obtain a tighter threshold and a \emph{parametric} algorithm by focusing on a more specific hypergraph model. 
		For instance, in~\cite{chien2018minimax}, Chien et al. derive a tighter bound for the binary weight case. As a concrete example, when $d = 3$ and $k = 2$ with two equal-sized clusters, the sufficient condition of Thm~4.1 in \cite{chien2018minimax} reads
		$(\sqrt{p}-\sqrt{q})^2\binom{n}{3}\alpha_n \geq \frac{1}{12} n\log n,$
		while that of Thm.~\ref{thm:main2} reads $\frac{(p-q)^2}{p}\binom{n}{3}\alpha_n \geq \frac{32}{3} n\log n$.
\end{remark}}
Indeed, by leveraging recent works on phase transition of random hypergraphs~\cite{coja2007counting,COOLEY2015569}, we can prove the order-wise optimality of our algorithms for the binary-valued edge case. 
\begin{proposition} \label{optimal}
	For the binary-valued edge case, there is no estimator which
	\begin{itemize}
		\item solves detection when $\binom{n}{d}\alpha_n = o(n)$
		\item is weakly consistent when $\binom{n}{d}\alpha_n = O(n)$; and
		\item is strongly consistent when $\binom{n}{d}\alpha_n = o(n\log n)$. 
	\end{itemize}
\end{proposition}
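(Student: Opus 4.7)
The three impossibility claims share a common strategy: in each regime, exhibit nodes whose posterior label distribution given $\{W_e\}$ coincides with the uniform prior on $[k]$, and hence cannot be estimated better than by random guessing. The natural candidates are \emph{isolated vertices}---vertices $v$ such that no edge containing $v$ has weight one. In the binary-weight model, for any $v$,
\begin{align*}
\Pr(v \text{ isolated}) &= (1-p\alpha_n)^{\binom{n_{\Psi(v)}-1}{d-1}}(1-q\alpha_n)^{\binom{n-1}{d-1}-\binom{n_{\Psi(v)}-1}{d-1}} \\
&= \exp\!\bigl(-\Theta(\tbinom{n}{d}\alpha_n/n)\bigr),
\end{align*}
using $\alpha_n=o(1)$ and $n_j=\Theta(n)$ (which we may assume w.l.o.g.\ for a lower-bound argument). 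Crucially, conditional on $v$ being isolated, the likelihood $\Pr(\{W_e\}\mid \Psi)$ does not depend on $\Psi(v)$, so after marginalizing the prior over the remaining labels the posterior on $\Psi(v)$ is uniform and any estimator errs on $v$ with probability at least $(k-1)/k$.

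The plan is then to translate each sparsity regime into a tail bound on the number $N_0$ of isolated vertices. When $\binom{n}{d}\alpha_n = o(n\log n)$, the display above gives $\ex[N_0] = n^{1-o(1)} \to \infty$, and a second-moment argument yields $N_0 \geq 1$ w.h.p., so at least one vertex is mislabelled with probability bounded away from zero, precluding strong consistency. When $\binom{n}{d}\alpha_n = O(n)$, we have $\ex[N_0] = \Theta(n)$ and Chebyshev's inequality gives $N_0 = \Omega(n)$ w.h.p., so the expected error fraction of any estimator is $\Omega(1)$, ruling out weak consistency. When $\binom{n}{d}\alpha_n = o(n)$, Markov's inequality on $n-N_0$ yields $N_0 = n-o(n)$ w.h.p.; even if an estimator perfectly labeled the $o(n)$ non-isolated vertices, the best-permutation overlap with $\Psi$ would still be $\tfrac{1}{k}+o(1)$, matching the random-guess baseline and so failing to solve detection.

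The main obstacle is controlling the concentration of $N_0$, because isolation events for distinct vertices are not independent: any pair $\{u,v\}$ shares $\binom{n-2}{d-2}$ edges. This dependence is mild in all three regimes: the covariance between $\mathbf{1}[u\text{ isolated}]$ and $\mathbf{1}[v\text{ isolated}]$ introduces only a multiplicative correction of $\exp(O(\alpha_n\binom{n-2}{d-2}))=1+o(1)$, so Chebyshev delivers the bounds above. A cleaner alternative is to sandwich our weighted model between two i.i.d.\ Bernoulli random $d$-uniform hypergraphs with edge probabilities $q\alpha_n$ and $p\alpha_n$, and invoke the giant-component and connectivity thresholds of \cite{coja2007counting,COOLEY2015569}, which directly supply the required statements about isolated vertices (and, more generally, about the fraction of vertices outside the giant component, should one wish to strengthen the weak-consistency bound using symmetric small components in place of isolated vertices).
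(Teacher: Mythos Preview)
Your argument is correct and, for the detection claim, essentially identical to the paper's (both observe that almost every vertex is isolated when $\binom{n}{d}\alpha_n=o(n)$). For the other two claims the paper takes a shorter, citation-based route: it invokes \cite{coja2007counting} to assert that no component has size $(1-o(1))n$ when $\binom{n}{d}\alpha_n=\Theta(n)$, and \cite{COOLEY2015569} for the connectivity threshold when $\binom{n}{d}\alpha_n=o(n\log n)$. You instead prove everything directly via the second-moment method on the count $N_0$ of isolated vertices, which is more elementary and unified (and your final paragraph in fact reproduces the paper's proof). What you gain is self-containment; what the paper gains is brevity, and for weak consistency its giant-component statement is formally stronger than your isolated-vertex count, though the latter already suffices.

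One point to tighten: the assertion that ``conditional on $v$ being isolated, the likelihood $\Pr(\{W_e\}\mid\Psi)$ does not depend on $\Psi(v)$'' is not literally correct. The zero-weight edges through $v$ contribute $(1-p\alpha_n)^{\binom{n_{\Psi(v)}-1}{d-1}}(1-q\alpha_n)^{\binom{n-1}{d-1}-\binom{n_{\Psi(v)}-1}{d-1}}$, which depends on $\Psi(v)$ through $n_{\Psi(v)}$. Two clean fixes: (i) observe that under an i.i.d.\ uniform prior the resulting log-likelihood ratio between two label choices for $v$ is $O\bigl(n^{d-3/2}\alpha_n\bigr)=o(1)$ in all three regimes, so the posterior is \emph{asymptotically} uniform and your error bounds still follow; or (ii) bypass the posterior entirely via a swapping argument---pair two isolated vertices in distinct (equal-sized) clusters and note that exchanging their labels yields a partition with exactly the same likelihood, so any estimator errs on at least one of them with probability at least $1/2$. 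Either patch closes the gap without changing your overall plan.
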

\begin{IEEEproof}
If $\binom{n}{d}\alpha_n = o(n)$, the fraction of isolated nodes approaches $1$, hence detection is infeasible. 
In~\cite{coja2007counting}, the authors show that if  $\binom{n}{d}\alpha_n = \Theta(n)$, 
there is no connected component of size $(1-o(1))n$, implying that weak consistency is infeasible.
Lastly, \cite{COOLEY2015569} shows that  $\binom{n}{d}\alpha_n > c\cdot n\log n$ for some constant $c>0$ is required for connectivity, a necessary condition for strong consistency.
\end{IEEEproof}

\section{Proofs}\label{sec:pf}

\subsection{Proof of Theorem~\ref{thm:main}} 
\label{pf:thm1}
We first outline the proof.
Proposition~\ref{prop} asserts that spectral clustering finds the exact clustering if $\ex[\mathbf{A}]$ is available instead of $\mathbf{A}^0$.
We then make use of Lemma~\ref{lem:sc} to bound the error fraction in terms of $\|\Az - \ex[\mathbf{A}]\|$.
Finally, we derive a new concentration bound for the above spectral norm, and combine it with Lemma~\ref{lem:sc} to prove the theorem.

Consider two off-diagonal entries $\mathbf{A}_{i,j}$ and $\mathbf{A}_{i',j'}$ such that $\Psi(i)=\Psi(i')$ and $\Psi(j)=\Psi(j')$. One can see from the definition that $\mathbf{A}_{i,j}$ is statistically identical to $\mathbf{A}_{i'j'}$, so $\ex [\mathbf{A}_{ij}] = \ex [\mathbf{A}_{i'j'}]$. Hence, by defining a $k\times k$ matrix $\mathbf{B}$ such that 
$\mathbf{B}_{\ell,m} = \ex [\mathbf{A}_{ij}]$, where $i\in \Psi^{-1}(\ell), ~j\in \Psi^{-1}(m)$, for some $i\neq j$, one can verify that 
$\mathbf{P} :=\mathbf{Z} \mathbf{B}\mathbf{Z}^T$
coincides with $\ex [\mathbf{A}]$ except for the diagonal entries.
Our model implies that the diagonal entries of $\mathbf{B}$ are strictly larger than its off-diagonal entries, so $\mathbf{B}$ is of full rank.
\begin{proposition} \label{prop}
	(Lemma~2.1 in~\cite{lei2015consistency}) Consider $\mathbf{B}\in \mathbb{R}^{k\times k}$ of full rank and the membership matrix $\mathbf{Z}\in\{0,1\}^{n\times k}$. Let $\mathbf{P} = \mathbf{Z} \mathbf{B}\mathbf{Z}^T$. Then the matrix $\mathbf{U} \in \mathbb{R}^{n\times k}$ whose columns are the first $k$ eigenvectors of $\mathbf{P}$ satisfies:$\mathbf{U}_{i*} = \mathbf{U}_{j*}$ whenever $\Psi(i) = \Psi(j)$; $\mathbf{U}_{i*}$ and $\mathbf{U}_{j*}$ are orthogonal whenever $\Psi(i) \neq \Psi(j)$.	
	In particular, a clustering algorithm on the rows of $\mathbf{U}$ will exactly output the hidden partition.
\end{proposition}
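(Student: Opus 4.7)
The plan is to show that the column space of $\mathbf{U}$ coincides with the column space of the membership matrix $\mathbf{Z}$, and then deduce the claimed row-structure of $\mathbf{U}$ from that factorization. Since $\mathbf{Z}$ has rank $k$ (each cluster is nonempty) and $\mathbf{B}$ is of full rank, the symmetric matrix $\mathbf{P} = \mathbf{Z}\mathbf{B}\mathbf{Z}^{T}$ has rank exactly $k$. Its column space is contained in $\mathrm{col}(\mathbf{Z})$ and, having the same dimension, must equal it. Because $\mathbf{P}$ is symmetric of rank $k$, the eigenvectors corresponding to the $k$ nonzero eigenvalues span $\mathrm{col}(\mathbf{P}) = \mathrm{col}(\mathbf{Z})$, so the columns of $\mathbf{U}$ form an orthonormal basis of $\mathrm{col}(\mathbf{Z})$.

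Consequently, I can write $\mathbf{U} = \mathbf{Z}\mathbf{X}$ for some invertible matrix $\mathbf{X}\in\mathbb{R}^{k\times k}$. Reading off rows yields $\mathbf{U}_{i*} = \mathbf{Z}_{i*}\mathbf{X} = \mathbf{X}_{\Psi(i)*}$, which immediately gives the first conclusion $\mathbf{U}_{i*} = \mathbf{U}_{j*}$ whenever $\Psi(i) = \Psi(j)$. For the orthogonality claim, I would exploit the orthonormality of eigenvectors, $\mathbf{U}^{T}\mathbf{U} = \mathbf{I}_{k}$. Substituting the factorization and using $\mathbf{Z}^{T}\mathbf{Z} = \mathrm{diag}(n_{1},\ldots,n_{k})$, one obtains $\mathbf{X}^{T}\,\mathrm{diag}(n_{1},\ldots,n_{k})\,\mathbf{X} = \mathbf{I}_{k}$; that is, $\mathrm{diag}(\sqrt{n_{1}},\ldots,\sqrt{n_{k}})\,\mathbf{X}$ is an orthogonal matrix. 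Its rows are therefore orthonormal, which translates into the rows of $\mathbf{X}$ being mutually orthogonal (with squared norm $1/n_{\ell}$). Transporting this back through $\mathbf{U}_{i*} = \mathbf{X}_{\Psi(i)*}$ gives $\langle \mathbf{U}_{i*},\mathbf{U}_{j*}\rangle = 0$ whenever $\Psi(i)\neq\Psi(j)$, which is the second conclusion.

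Putting the two pieces together, $\mathbf{U}$ assumes exactly $k$ distinct, pairwise orthogonal row values, one per cluster, so any reasonable clustering routine applied to the rows of $\mathbf{U}$ recovers the hidden partition. I do not anticipate a serious technical obstacle here; the argument is purely linear-algebraic. The one subtlety worth flagging is that ``the first $k$ eigenvectors'' is only determined up to a $k\times k$ orthogonal rotation acting on $\mathrm{col}(\mathbf{Z})$, but both row properties (equality within clusters and orthogonality between clusters) are invariant under such a rotation, so the conclusion is independent of the particular orthonormal basis chosen. The real leverage of the proof is the factorization $\mathbf{U} = \mathbf{Z}\mathbf{X}$, which converts eigenvector orthonormality into row-orthogonality of $\mathbf{X}$ via the diagonal scaling $\mathbf{Z}^{T}\mathbf{Z}$.
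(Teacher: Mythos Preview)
Your proof is correct and is essentially the standard argument for this lemma. The paper does not supply its own proof of this proposition; it simply cites Lemma~2.1 of Lei and Rinaldo (2015), whose proof proceeds exactly along the lines you sketch: factor $\mathbf{U}=\mathbf{Z}\mathbf{X}$ via the identification $\mathrm{col}(\mathbf{U})=\mathrm{col}(\mathbf{P})=\mathrm{col}(\mathbf{Z})$, then read off the row structure from $\mathbf{U}^{T}\mathbf{U}=\mathbf{I}_{k}$ and $\mathbf{Z}^{T}\mathbf{Z}=\mathrm{diag}(n_{1},\ldots,n_{k})$. Your handling of the rotation ambiguity and the diagonal scaling is also accurate.
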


Proposition~\ref{prop} suggests that spectral clustering  successfully finds  $\Psi$  if $\mathbf{P}$ is available. We now turn to the case  where  $\Az$ is available instead of $\mathbf{P}$.  It is developed in \cite{lei2015consistency} a general scheme to prove error bounds for spectral clustering under an assumption that $k$-clustering step outputs a ``good'' solution. 
To clarify the meaning of ``goodness'', we formally describe the $k$-means clustering problem. 
\begin{definition}[$k$-means clustering problem]
The goal is to cluster the rows of an $n\times k$ matrix $\mathbf{U}$. Define the cost function of a partition $\Phi :[n] \to [k]$ as $\cost(\Phi) = \sum_{j=1}^k \var(\Phi^{-1}(j))$,
where 
$\var(\mathcal{A}) = \sum_{i\in \mathcal{A}} \left\|\mathbf{U}_{i*} -  \frac{1}{|\mathcal{A}|} \sum_{\ell \in \mathcal{A}} \mathbf{U}_{\ell}  \right\|^2\,$.
We say $\Phi$ is $(1+\epsilon)$-approximate if 
$\cost(\Phi) \leq (1+\epsilon ) \min_{\Phi' : [n]\to [k]} \cost(\Phi')\,$.
\end{definition}
We now introduce the general scheme to prove error bounds, formally stated in the following lemma. 
\begin{lemma} \label{lem:sc}
	Assume that $\mathbf{P}$ is defined as in Proposition~\ref{prop} and $\sigma_{\min}(\mathbf{P})$ is the smallest non-zero singular value of $\mathbf{P}$. Let $\mathbf{M}$ be any symmetric matrix and $\mathbf{U}\in\mathbb{R}^{n\times k}$ be the $k$ largest eigenvectors of $\mathbf{M}$. Suppose a $(1+\epsilon)$-approximate solution $\Phi$ for a constant $\epsilon>0$.
	Then, for some $\cccc>0$,
$\err(\Phi) \leq \cccc k(1+\epsilon)\frac{\|\mathbf{\mathbf{M}-\mathbf{P}}\|^2}{\sigma_{\min}(\mathbf{P})^2}$,
	provided that $\cccc k(1+\epsilon)\frac{\|\mathbf{\mathbf{M}-\mathbf{P}}\|^2}{\sigma_{\min}(\mathbf{P})^2} \leq 1$.
\end{lemma}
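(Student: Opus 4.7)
The plan is to combine a Davis--Kahan perturbation bound with the rigid block structure of the top-$k$ eigenspace of $\mathbf{P}$ (Proposition~\ref{prop}), followed by a counting argument on the $(1+\epsilon)$-approximate $k$-means output. Since $\mathbf{P}=\mathbf{Z}\mathbf{B}\mathbf{Z}^T$ has rank exactly $k$, its $(k+1)$-th eigenvalue is $0$, so the spectral gap between the $k$-th and $(k+1)$-th eigenvalues of $\mathbf{P}$ is $\sigma_{\min}(\mathbf{P})$. The Davis--Kahan $\sin\Theta$ theorem (Frobenius form) applied to the symmetric matrices $\mathbf{M}$ and $\mathbf{P}$ then produces an orthogonal $\mathbf{O}\in\mathbb{R}^{k\times k}$ with
\begin{align*}
\|\mathbf{U}-\mathbf{U}^*\mathbf{O}\|_F \;\leq\; C_0\sqrt{k}\,\frac{\|\mathbf{M}-\mathbf{P}\|}{\sigma_{\min}(\mathbf{P})},
\end{align*}
where $\mathbf{U}^*\in\mathbb{R}^{n\times k}$ collects the top-$k$ eigenvectors of $\mathbf{P}$.

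I then exploit the geometry of the ideal embedding $\mathbf{U}^*\mathbf{O}$. By Proposition~\ref{prop}, the rows of $\mathbf{U}^*$ take only $k$ distinct values $\mathbf{v}_1,\ldots,\mathbf{v}_k$, with $\|\mathbf{v}_j\|=1/\sqrt{n_j}$ and $\langle\mathbf{v}_j,\mathbf{v}_{j'}\rangle=0$ for $j\neq j'$. Since $\mathbf{O}$ is orthogonal, the same relations persist for the rotated vectors $\mathbf{v}_j\mathbf{O}$, so any two distinct rows of $\mathbf{U}^*\mathbf{O}$ are separated by at least $\delta:=\sqrt{2/n_{\max}}$. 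The true partition $\Psi$ paired with the centers $\mathbf{v}_j\mathbf{O}$ achieves $k$-means cost $0$ on $\mathbf{U}^*\mathbf{O}$, so the minimum cost on $\mathbf{U}$ is at most $\|\mathbf{U}-\mathbf{U}^*\mathbf{O}\|_F^2$; combining with the $(1+\epsilon)$-approximation hypothesis gives
\begin{align*}
\sum_{i=1}^n \|\mathbf{U}_{i*}-\mathbf{c}_{\Phi(i)}\|^2 \;\leq\; (1+\epsilon)\,\|\mathbf{U}-\mathbf{U}^*\mathbf{O}\|_F^2,
\end{align*}
where $\mathbf{c}_1,\ldots,\mathbf{c}_k$ are the centers returned together with $\Phi$.

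The final step is a counting argument. Call a node $i$ \emph{bad} if either $\|\mathbf{U}_{i*}-\mathbf{c}_{\Phi(i)}\|\geq \delta/4$ or $\|\mathbf{U}_{i*}-(\mathbf{U}^*\mathbf{O})_{i*}\|\geq \delta/4$. Applying Markov's inequality to both the previous display and $\|\mathbf{U}-\mathbf{U}^*\mathbf{O}\|_F^2$, the bad set $B$ obeys $|B|\leq C_1\,k(1+\epsilon)\,n_{\max}\,\|\mathbf{M}-\mathbf{P}\|^2/\sigma_{\min}(\mathbf{P})^2$. For any good node $i$, the triangle inequality forces $\mathbf{c}_{\Phi(i)}$ to lie within $\delta/2$ of $(\mathbf{U}^*\mathbf{O})_{i*}$, hence strictly closer to $(\mathbf{U}^*\mathbf{O})_{i*}$ than to any other ideal row. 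This has two consequences: good nodes in the same true cluster must share the same label under $\Phi$, and good nodes from distinct true clusters must receive distinct labels. The partial map from true-cluster labels to $\Phi$-labels induced by good nodes is therefore injective and extends to a permutation $\Pi\in\mathcal{P}$ with $\Pi(\Phi(i))=\Psi(i)$ for every good node. Consequently $\err(\Phi)\leq |B|/n$, and bounding $n_{\max}\leq n$ delivers the claimed inequality.

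The main obstacle is in the last step: the bad-set threshold must be chosen as a small enough fraction of $\delta=\sqrt{2/n_{\max}}$ so that the two-sided triangle inequality simultaneously enforces intra-cluster consistency and inter-cluster separation for good nodes, while keeping $|B|$ controlled by the Davis--Kahan quantity. Any slack in that trade-off inflates the constant $\cccc$; the ``provided'' clause in the statement is exactly the regime in which the resulting bound $\err(\Phi)\leq \cccc k(1+\epsilon)\|\mathbf{M}-\mathbf{P}\|^2/\sigma_{\min}(\mathbf{P})^2$ is non-vacuous.
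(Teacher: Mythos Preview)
Your argument is correct and is precisely the Lei--Rinaldo scheme the paper defers to: the paper's own ``proof'' of this lemma is simply the sentence ``We refer to \cite{lei2015consistency} for the proof,'' and what you have written (Davis--Kahan on the rank-$k$ matrix $\mathbf{P}$, the $\sqrt{2/n_{\max}}$ row separation from Proposition~\ref{prop}, the $(1+\epsilon)$ cost bound, and the good/bad counting via the triangle inequality) is exactly that argument. One small point worth making explicit: the extension of your injective partial map to a full permutation is harmless because any true cluster containing \emph{no} good node already has all of its nodes in $B$, so those nodes are counted as errors regardless of how the permutation is completed; hence $\err(\Phi)\le |B|/n$ holds unconditionally, and the ``provided'' clause is only needed to make the bound nontrivial, as you note.
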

\begin{IEEEproof}
	We refer to \cite{lei2015consistency} for the proof.
\end{IEEEproof}

Thm.~1.2. in~\cite{matouvsek2000approximate} implies that a $(1+\epsilon)$-approximate solution can be found using the approximate geometric $k$-clustering.\footnote{\rev{}{Note that this result holds only for a fixed $k$~\cite{matouvsek2000approximate}.}}
Hence, the above lemma implies that one needs to bound $\|\Az - \mathbf{P}\|$ in order to analyze the error fraction of the spectral clustering.
Our technical contribution lies mainly in deriving such concentration bound, formally stated below.
\begin{lemma}\label{lem:spec}
	There exist constants $\ctt$ (depending only on $d$), $\cddd,\ceee>0$ such that the processed similarity matrix $\Az$ with constant $\ctt$ (see Alg.~\ref{alg1}) satisfies
	$\|\mathbf{A}^0 - \mathbf{P}\| \leq \cddd\sqrt{n  \binom{n-2}{d-2}\alpha_n}$
	 with probability exceeding $1-O(n^{-1})$, provided that $n \binom{n-2}{d-2}p\alpha_n \geq \ceee$.
\end{lemma}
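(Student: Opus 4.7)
The plan is to bound $\|\mathbf{A}^0 - \mathbf{P}\|$ via the triangle inequality on the decomposition $\mathbf{A}^0 - \mathbf{P} = (\mathbf{A}^0 - \mathbf{A}) + (\mathbf{A} - \ex[\mathbf{A}]) + (\ex[\mathbf{A}] - \mathbf{P})$. The last piece is deterministic, supported only on the diagonal, of operator norm $O\bigl(\binom{n-1}{d-1}\alpha_n\bigr)$, and is easily absorbed into $\cddd\sqrt{n\binom{n-2}{d-2}\alpha_n}$. The truncation piece $\mathbf{A}^0 - \mathbf{A}$ is controlled by a first-moment estimate: since $\ex[\sum_j \mathbf{A}_{ij}] = \Theta\bigl(\binom{n-2}{d-2}\alpha_n \cdot n\bigr)$, a Markov/Chernoff argument with $\ctt$ chosen large enough (in terms of $d$) shows that only a vanishing number of rows exceed the truncation threshold and that the $\ell^2$ mass of the zeroed rows is of the target order. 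The real work is the centered term $\mathbf{A} - \ex[\mathbf{A}]$.

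Write $\mathbf{A} - \ex[\mathbf{A}] = \sum_{e\in\mathcal{E}} (W_e - \ex W_e)\,\mathbf{E}_e$, where $\mathbf{E}_e$ is the indicator matrix with $(\mathbf{E}_e)_{ij} = 1$ iff $i\ne j$ and $\{i,j\}\subset e$. A direct application of matrix Bernstein to these independent bounded summands gives a bound of order $\sqrt{n\binom{n-2}{d-2}\alpha_n \log n} + d\log n$, losing a $\sqrt{\log n}$; removing that logarithm is precisely what forces the zeroing-out step. I would therefore follow a Feige--Ofek style $\epsilon$-net discretization. Fix a $(1/2)$-net $\mathcal{N} \subset S^{n-1}$ of cardinality $e^{O(n)}$; for each $(x,y) \in \mathcal{N}^2$, expand
\[
x^\top (\mathbf{A} - \ex[\mathbf{A}])\, y \;=\; \sum_{e\in\mathcal{E}} (W_e - \ex W_e)\,\sigma_e(x,y), \qquad \sigma_e(x,y) := \sum_{\substack{i,j\in e\\ i\ne j}} x_i y_j,
\]
and split $\mathcal{E}$ into a \emph{light} class $\mathcal{E}_L$ where $|\sigma_e(x,y)| \le \tau$ (with $\tau$ of order $\sqrt{\binom{n-2}{d-2}\alpha_n / n}$) and a \emph{heavy} class $\mathcal{E}_H$. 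Expanding $\sum_e \sigma_e(x,y)^2$ combinatorially gives a leading term of order $\binom{n-2}{d-2}\|x\|^2\|y\|^2$, so Bernstein applied to the light part yields a tail of order $\exp(-c\,n)$ that beats the $|\mathcal{N}|^2 = e^{O(n)}$ union bound whenever $n\binom{n-2}{d-2}\alpha_n \ge \ceee$.

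The main obstacle is the heavy contribution $\sum_{e\in\mathcal{E}_H}(W_e - \ex W_e)\sigma_e(x,y)$, where the per-summand magnitude is too large for Bernstein to close the net union bound. Here one must pass to a deterministic structural property of the truncated hypergraph. The analogue of the Feige--Ofek discrepancy statement in our setting is that, after zeroing out, for every pair of subsets $S,T \subset [n]$ the weighted sum $\sum_e W_e\cdot |\{(i,j)\in(S\times T)\cap\binom{e}{2} : i\ne j\}|$ concentrates around its mean up to permissible lower-order corrections. The difficulty relative to the graph case $d=2$ is that a single edge $e$ can contribute up to $d(d-1)$ ordered pairs simultaneously, so different $(S,T)$ blocks become coupled through shared edges; this is exactly the ``strong dependencies across entries'' mentioned in the remark preceding the lemma, and I expect this to be where the bulk of the technical work sits. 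I would prove the required $d$-uniform discrepancy inequality by a case split on the ratio $|S|/|T|$ (as in the graph case) combined with a Chernoff bound tuned to the edge multiplicity, followed by a union bound over all $(S,T)$. Combining this with the Bernstein estimate for the light part delivers $\|\mathbf{A} - \ex[\mathbf{A}]\| = O(\sqrt{n\binom{n-2}{d-2}\alpha_n})$ with probability $1 - O(n^{-1})$, which together with the truncation and diagonal bounds proves the lemma.
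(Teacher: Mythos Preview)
Your triangle-inequality decomposition $\mathbf{A}^0 - \mathbf{P} = (\mathbf{A}^0 - \mathbf{A}) + (\mathbf{A} - \ex[\mathbf{A}]) + (\ex[\mathbf{A}] - \mathbf{P})$ has a genuine gap: in the critical sparse regime $n\binom{n-2}{d-2}\alpha_n = \Theta(1)$ (equivalently $\binom{n}{d}\alpha_n = \Theta(n)$), the middle term $\|\mathbf{A} - \ex[\mathbf{A}]\|$ is \emph{not} $O(\sqrt{n\nu})$. The row sum $\sum_j \mathbf{A}_{ij} = (d-1)\sum_{e\ni i} W_e$ has mean $\Theta(1)$, so by a balls-in-bins argument the maximal row sum over $n$ nodes is $\Theta(\log n/\log\log n)$; taking $x = e_{i^\star}$ for the maximizing node gives $\|(\mathbf{A}-\ex[\mathbf{A}])e_{i^\star}\|_2 = \Omega(\sqrt{\log n/\log\log n}) = \omega(\sqrt{n\nu})$. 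For the same reason the zeroed rows each carry $\ell^2$ mass $\omega(\sqrt{n\nu})$, so $\|\mathbf{A}^0-\mathbf{A}\|$ is not of the target order either. These two large terms cancel in $\mathbf{A}^0-\ex[\mathbf{A}]$, but the triangle inequality throws that cancellation away. This is precisely why truncation exists, and why it cannot be peeled off as a separate additive error.

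The paper avoids this by never separating $\mathbf{A}^0$ from $\mathbf{A}$: it bounds $\|\mathbf{A}^0-\ex[\mathbf{A}]\|$ directly. Writing $J$ for the (random) set of zeroed rows, it conditions on $J=I$ and takes a union bound over all $I$ with $|I|\le (n\nu)^{-3}n$ (the size bound coming from a degree-tail lemma), so the Bernstein tail on the light part has to beat $2^n$ rather than just the net size---hence the requirement that the per-$I$ failure probability be $e^{-2n\log 2}$. A second structural difference: the light/heavy split in the paper is over \emph{pairs} $(i,j)$ according to whether $|x_ix_j| < \delta^2\sqrt{\nu/n}$, not over edges according to $|\sigma_e(x,y)|$. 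Splitting by pairs lets the heavy part be controlled by a bounded-density (discrepancy) property of $\mathbf{A}^0$ phrased purely in terms of $\den_{\mathbf{A}^0}(S,T)$, which is exactly the Feige--Ofek statement; the hypergraph dependency is then confined to proving that $\den_{\mathbf{A}}(S,T)$---a sum of independent variables bounded by $d^2$---satisfies the same Chernoff-type tails. Your edge-based split would force you to redo the Kahn--Szemer\'edi combinatorics at the hyperedge level, which is workable but more laborious, and in any case still requires the heavy-part argument to be run on $\mathbf{A}^0$, not on $\mathbf{A}$.
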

\begin{IEEEproof}
	See Appendix~\ref{pf:spec}.
\end{IEEEproof}

%The proof of Lemma~\ref{lem:spec} is not based on the concentration result given by matrix Bernstein inequality~\cite{tropp2012user} which comes with an extra $\sqrt{\log n}$ term to our bound.
%In fact, it is technically involved: It requires delicate large deviation results for handling the weighted edge case and makes use of tricky combinatorial arguments. Our combinatorial argument is inspired by seminal techniques developed in~\cite{friedman1989second} for bounding the second largest eigenvalue of random regular graphs. 

\rev{}{Note that this lemma holds for a fixed $d$.}
We now conclude the proof with these lemmas. Let $\mu:= \binom{n-2}{d-2} \alpha_n $. We first estimate $\sigma_{\min}(\mathbf{P})$.
\begin{claim}
	$\sigma_{\min}(\mathbf{P}) \leq \cfff n_{\min} \mu $ for some constant $\cfff>0$.
\end{claim}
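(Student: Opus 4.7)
The plan is to reduce the $n\times n$ smallest-singular-value computation to a $k\times k$ problem and then produce the upper bound by plugging a carefully chosen vector into the Rayleigh quotient.

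\textbf{Step 1 (Dimension reduction).} Since $\mathbf{Z}^T\mathbf{Z} = \mathbf{D} := \mathrm{diag}(n_1,\ldots,n_k)$, I would factor $\mathbf{Z} = \mathbf{Q}\mathbf{D}^{1/2}$ with $\mathbf{Q}\in\mathbb{R}^{n\times k}$ satisfying $\mathbf{Q}^T\mathbf{Q} = \mathbf{I}_k$. Then $\mathbf{P} = \mathbf{Q}\bigl(\mathbf{D}^{1/2}\mathbf{B}\mathbf{D}^{1/2}\bigr)\mathbf{Q}^T$, so the nonzero singular values of $\mathbf{P}$ coincide with those of $\mathbf{D}^{1/2}\mathbf{B}\mathbf{D}^{1/2}$; in particular, $\sigma_{\min}(\mathbf{P}) = \sigma_{\min}(\mathbf{D}^{1/2}\mathbf{B}\mathbf{D}^{1/2})$.

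\textbf{Step 2 (Explicit form of $\mathbf{B}$).} A direct count of how many edges $e \supset \{i,j\}$ are homogeneous (which happens iff $\Psi(i)=\Psi(j)=\ell$ and all $d-2$ other nodes in $e$ also lie in cluster $\ell$) yields $\mathbf{B}_{\ell m} = q\mu$ for $\ell\neq m$ and $\mathbf{B}_{\ell\ell} = q\mu + (p-q)\alpha_n\binom{n_\ell-2}{d-2}$; equivalently,
\[
\mathbf{B} \;=\; q\mu\,\mathbf{1}_k\mathbf{1}_k^T \;+\; (p-q)\alpha_n\cdot \mathrm{diag}\!\left(\binom{n_\ell-2}{d-2}\right)_{\ell=1}^k.
\]

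\textbf{Step 3 (Test-vector bound).} Pick $\ell^\star$ with $n_{\ell^\star} = n_{\min}$ and any $m \neq \ell^\star$, and set $\mathbf{v} := \mathbf{D}^{-1/2}(\mathbf{e}_{\ell^\star}-\mathbf{e}_m)$. Because $\mathbf{e}_{\ell^\star}-\mathbf{e}_m \perp \mathbf{1}_k$, the rank-one part of $\mathbf{B}$ drops out and
\[
\mathbf{v}^T\mathbf{D}^{1/2}\mathbf{B}\mathbf{D}^{1/2}\mathbf{v} \;=\; (\mathbf{e}_{\ell^\star}-\mathbf{e}_m)^T\mathbf{B}(\mathbf{e}_{\ell^\star}-\mathbf{e}_m) \;=\; (p-q)\alpha_n\!\left[\binom{n_{\ell^\star}-2}{d-2}+\binom{n_m-2}{d-2}\right]\!,
\]
while $\|\mathbf{v}\|^2 = 1/n_{\ell^\star}+1/n_m \geq 1/n_{\min}$. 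Using $\binom{n_\ell-2}{d-2}\leq \binom{n-2}{d-2}=\mu/\alpha_n$ for each $\ell$, I obtain
\[
\sigma_{\min}(\mathbf{D}^{1/2}\mathbf{B}\mathbf{D}^{1/2}) \;\leq\; \frac{\mathbf{v}^T\mathbf{D}^{1/2}\mathbf{B}\mathbf{D}^{1/2}\mathbf{v}}{\|\mathbf{v}\|^2} \;\leq\; 2(p-q)\,n_{\min}\,\mu.
\]

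\textbf{Step 4 (Conclude).} Combining Steps 1 and 3 gives $\sigma_{\min}(\mathbf{P}) \leq 2(p-q)\,n_{\min}\,\mu$, which establishes the claim with $\cfff := 2(p-q) > 0$. The only non-routine ingredient is the test vector $\mathbf{v}=\mathbf{D}^{-1/2}(\mathbf{e}_{\ell^\star}-\mathbf{e}_m)$ in Step~3: it is engineered so that (i) the rank-one summand $q\mu\,\mathbf{1}_k\mathbf{1}_k^T$ of $\mathbf{B}$ annihilates $\mathbf{e}_{\ell^\star}-\mathbf{e}_m$, and (ii) the $\mathbf{D}^{\pm 1/2}$ factors cancel cleanly; with that choice the remaining calculation is elementary and the hypothesis $n_{\max}/n_{\min}=O(1)$ is not even needed for this particular claim.
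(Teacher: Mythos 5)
Your argument is internally correct, but it proves the wrong inequality. The ``$\leq$'' in the claim statement is a typo for ``$\geq$'': what the proof of Theorem~\ref{thm:main} actually needs --- and what the paper's own proof of the claim establishes --- is a \emph{lower} bound $\sigma_{\min}(\mathbf{P}) \geq \cfff n_{\min}\mu$. You can see this from how the claim is used immediately afterward: $\sigma_{\min}(\mathbf{P})$ sits in the \emph{denominator} of Lemma~\ref{lem:sc}'s error bound, and the paper concludes
$\cccc k(1+\epsilon)\frac{\|\Az-\mathbf{P}\|^2}{\sigma_{\min}(\mathbf{P})^2} \leq \frac{\cccc k(1+\epsilon)\cddd^2}{\cfff^2}\frac{n}{n_{\min}^2\mu}$,
which requires $\sigma_{\min}(\mathbf{P})^2 \geq \cfff^2 n_{\min}^2\mu^2$. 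An upper bound on $\sigma_{\min}(\mathbf{P})$ is of no help there.

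Your Rayleigh-quotient test vector $\mathbf{v}=\mathbf{D}^{-1/2}(\mathbf{e}_{\ell^\star}-\mathbf{e}_m)$ is a one-way tool: plugging in a vector can only \emph{cap} the smallest eigenvalue from above, never bound it from below. To get the lower bound, one argues instead: (i) since $\mathbf{Z}\mathbf{D}^{-1/2}$ has orthonormal columns, $\sigma_{\min}(\mathbf{P})=\sigma_{\min}(\mathbf{D}^{1/2}\mathbf{B}\mathbf{D}^{1/2})\geq \sigma_{\min}(\mathbf{D}^{1/2})^2\sigma_{\min}(\mathbf{B})= n_{\min}\sigma_{\min}(\mathbf{B})$; and (ii) writing $\mathbf{B}=q\mu\,\mathbf{1}\mathbf{1}^T+\mu\,\mathrm{diag}(f_1,\ldots,f_k)$ with $f_\ell=(p-q)\binom{n_\ell-2}{d-2}/\binom{n-2}{d-2}>0$, both summands are PSD, so $\mathbf{B}\succeq (\min_\ell f_\ell)\mu\,\mathbf{I}$ and hence $\sigma_{\min}(\mathbf{B})\geq (\min_\ell f_\ell)\mu=\Theta(\mu)$. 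This is also where the hypothesis $n_{\max}/n_{\min}=O(1)$ earns its keep: it guarantees $\min_\ell f_\ell$ is bounded away from zero, so the constant does not degenerate. The fact that your proof never invoked that hypothesis --- which you even flagged as a feature --- is precisely the symptom that you were proving the opposite, and easier, direction.
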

\begin{IEEEproof}
 By definition, 
$\mathbf{P}= (\mathbf{Z} \Delta^{-1}) \Delta \mathbf{B} \Delta^T (\mathbf{Z} \Delta^{-1})^T$, where $\Delta = \text{diag}(\sqrt{n_1},\sqrt{n_2},\ldots, \sqrt{n_k}  ) $. Since the columns of $\mathbf{Z} \Delta^{-1}$ are orthonormal, $\sigma_{\min}(\mathbf{P}) =\sigma_{\min}(\Delta \mathbf{B} \Delta^T)$.
One can show that $\sigma_{\min}(\Delta \mathbf{B} \Delta^T) \geq \sigma_{\min}(\Delta)^{2} \sigma_{\min}(\mathbf{B})$.
Hence, $\sigma_{\min}(\mathbf{P}) \geq \sigma_{\min}(\Delta)^2 \sigma_{\min}(\mathbf{B}) = n_{\min} \sigma_{\min}(\mathbf{B})$.
 Hence, we calculate $\sigma_{\min}(\mathbf{B})$.
 By the definition of $\mathbf{B}$,
\begin{align*}
\mathbf{B}_{\ell m} &= \begin{cases}p\alpha_n \binom{n_\ell -2}{d-2} + q\alpha_n \left[\binom{n-2}{d-2} -\binom{n_\ell-2}{d-2}\right] &\text{if}~\ell =m,   \\ q\alpha_n \binom{n-2}{d-2}&\text{if}~\ell \neq m \end{cases}
\\
&=\mu \cdot\begin{cases}  \frac{\binom{n_{\ell}-2}{d-2}}{\binom{n-2}{d-2}}(p-q) +q  &\text{if}~\ell =m,   \\   q&\text{if}~\ell \neq m. \end{cases}
\end{align*}
Thus, $\mathbf{B} = \mu \cdot q\mathbf{1} \mathbf{1}^T + \mu \cdot \text{diag}(f_1,f_2,\ldots, f_\ell)$, where $f_\ell:=\frac{\binom{n_{\ell}-2}{d-2}}{\binom{n-2}{d-2}}(p-q)$.
As $n_{\max}/n_{\min} = O(1)$, each $f_\ell$ converges to a positive constant, implying that $\sigma_{\min}(\mathbf{B})= \Theta(\mu) $.
\end{IEEEproof}
By Lemma~\ref{lem:spec} and the above claim, 
$\cccc k(1+\epsilon)\frac{\|\Az-\mathbf{P}\|^2}{\sigma_{\min}(\mathbf{P})^2} \leq \frac{\cccc k(1+\epsilon)\cddd ^2}{\cfff^2}\frac{n}{n_{\min}^2 \mu}$ holds w.p. $1-O(n^{-1})$ for $n\mu \geq \ceee$.
Choosing $\caa =\frac{\ceee }{d(d-1)}$, $\caaa = \frac{\cccc k(1+\epsilon)\cddd^2}{\cfff^2} $ completes the proof.

\begin{remark} \label{rmk2}(Technical novelty relative to the graph case):
	Indeed, proving the sharp concentration of a spectral norm  has been a key challenge in the spectral analysis~\cite{friedman1989second,feige2005spectral}. 
	While most bounds developed hinge upon the independence between entries\footnote{For instance, the most studied model, called the Wigner matrix, assumes independence among entries. See~\cite{tao2012topics} for more details.}, the matrix $\mathbf{A}$ in HSC has strong dependencies across entries due to its construction. 
	For instance, the entries $\mathbf{A}_{12}$ and $\mathbf{A}_{13}$ both have a term $W_e$ for any edge $e$ of the form $\{1,2,3,j_4,j_5,\ldots, j_d \}$, hence sharing $\binom{n-3}{d-3}$ many terms.
	
	One approach to handle this dependency is to use matrix Bernstein inequality~\cite{tropp2012user} on the decomposition $\mathbf{A} =\sum_{e\in \mathcal{E}}W_e \mathbf{S}_e$,
	where  $S_e := \sum_{\substack{i,j\in e\\ i\neq j}} \mathbf{e}_i \mathbf{e}_j^T$. See~\cite{ghoshdastidar2015consistency,JMLR:v18:16-100}. However, this approach provides a bound which comes with an extra $\sqrt{\log n}$ factor  relative to the bound in Lemma~\ref{lem:spec}, resulting in a suboptimal consistency guarantee as described in Sec.~\ref{relatedwork}.  
	
	 Another approach is a combinatorial method~\cite{friedman1989second}, which counts the number of edges between subsets.
	 The rationale behind this method is as follows. 
	 From the definition of the spectral norm, one needs to bound the quantity $\mathbf{x}^T(\Az-\mathbf{P})\mathbf{x}$ for any vector $\mathbf{x}$.
	It turns out that this quantity has a close connection to the number of (hyper)edges between two subsets in a random (hyper)graph. 
	 For instance, $\mathbf{1}_A^T \mathbf{A} \mathbf{1}_B$ is precisely the number of edges between $A$ and $B$. 
	 
	  Indeed, a technique for estimating the number of hyperedges between two arbitrary subsets is developed in~\cite{jain2014provable}. 
	 Using this method, however, one may only obtain a suboptimal guarantee, which is $\binom{n}{d}\alpha_n=\Omega(n^{1.5})$.
On the other hand, we show via our analysis that the order-optimal guarantee can be obtained by improving the standard combinatorial method. See Appendix~\ref{pf:spec}.
\end{remark}

\subsection{Proof of Theorem~\ref{thm:main2}} \label{pf:thm2}
We first outline the proof. 
Using the union bound, we show that it is sufficient to prove  $\Pr(\phsclr(i) = j) = o(n^{-1})$ for all $1\leq i \leq n$ and $j \neq \Psi(i)$.
We then consider the following events to bound this error probability.
The first event is that the average edge weight of the edges between the true community $\Psi(i)$ and node $i$ is less than a certain threshold, and the other one is that the average edge weight of the edges between the wrong community $j$ and node $i$ is greater than the certain threshold. 
We will first show that if the misclassification event occurs, at least one of these two events must occur.
Thus, we bound the error probability by bounding those of these two events using Lemma~\ref{largedev} and Lemma~\ref{marginal}, respectively.

We consider the boundary case $\binom{n}{d}\alpha_n  = \Theta(n \log n)$. 
\rev{}{As $\beta \binom{n}{d}\alpha_n = \Theta(n\log\log n) = \omega(n)$, Corollary~\ref{weakc} guarantees that $\phsc$ is weakly consistent.}
Without loss of generality, assume that the identity permutation is equal to $\argmin_{\Pi\in \mathcal{P}} |\{i: \Psi(i) \neq \Pi(\phsc(i)) \}|$.
Then, $
\frac{|\phsc^{-1}(j) \cap \Psi^{-1}(j)|}{|\phsc^{-1}(j)|} > 1-\gamma\label{almostcorr}$, i.e., at least $1-\gamma$ fraction of the nodes that are classified as in community $j$ are correctly classified. 
The second stage of HSCLR refines the output of the first stage $\phsc$, resulting in $\phsclr$. 
By the union bound, we have $\Pr(\err(\phsclr)\neq 0) \leq \sum_{i=1}^{n} \sum_{j\neq \Psi(i)} \Pr(\phsclr(i) = j)$.
Since the total number of summands is $\Theta(n)$, if $\Pr(\phsclr(i) = j) = o(n^{-1})$ for all $1\leq i \leq n$ and $j \neq \Psi(i)$, then $\Pr(\err(\phsclr)\neq 0) = o(1)$.

By the refinement rule~\eqref{refinerule}, $\Pr(\phsclr(i) = j) \leq \Pr\left( \frac{\sum_{e\in \nei (\Psi(i))} W_e}{|\nei (\Psi(i))|} < \frac{\sum_{e\in \nei (j)} W_e}{|\nei (j)|}   \right)$.
For any real numbers $(a,b,t)$, $\left[ a \geq b \right] \supset \left[a \geq t\right] \cap \left[t \geq b\right]$ holds. 
By taking complements of both sides, we have $\left[ a<b\right] \subset  \left[a<t\right] \cup \left[t<b\right]$. 
Therefore, by the union bound, $P(a < b) \leq P(a < t) + P(t < b)$ holds for any $(a,b,t)$.
Applying this bound, we have
\begin{align}
&\Pr\left( \frac{\sum_{e\in \nei (\Psi(i))} W_e}{|\nei (\Psi(i))|}  < \frac{\sum_{e\in \nei (j)} W_e}{|\nei (j)|}   \right) \label{basepr} \\
&\leq \underbrace{\Pr\left( \frac{\sum_{e\in \nei (\Psi(i))} W_e}{|\nei (\Psi(i))|}  < \frac{p+q}{2} \alpha_n \right)}_{R_1}\\
&+\underbrace{\Pr\left( \frac{p+q}{2}\alpha_n < \frac{\sum_{e\in \nei (j)} W_e}{|\nei (j)|} \right)}_{R_2}\,, 
\end{align}

We first interpret $R_1$ and $R_2$.
For illustration, assume that $\phsclr$ coincides with $\Psi$. 
Under this assumption, observe that
 $\frac{\sum_{e\in \nei (\Psi(i))} W_e}{|\nei (\Psi(i))|}$ is equal to the average edge weight of the homogeneous edges within community $\Psi(i)$.
Since the expected value of this term is $\frac{p}{2}\alpha_n$, one can show that the term $R_1$ vanishes.
Similarly, $\frac{\sum_{e\in \nei (j)} W_e}{|\nei (j)|} $ is the average weight of the edges connecting $i$ and the other nodes in community $j$. 
Since these edges are heterogeneous, $R_2$ also vanishes.

Indeed, as $\err(\phsc)$ is not exactly zero, but an arbitrarily small constant, the above interpretation is not precise.
In what follows, we show that $R_1$ and $R_2$ vanish as well for the case.

We begin with bounding $R_1$.
Denote by $\mathcal{E}_h$ the set of all homogeneous edges. 
Recall that edges in $\nei (\Psi(i))$, except $O(\gamma)$ fraction, are homogeneous, so $|\nei (\Psi(i))\cap \mathcal{E}_h| =(1-O(\gamma))|\nei (\Psi(i))|$. By restricting the range of summation,
$R_1 \leq \Pr\left(  \sum_{e\in \nei (\Psi(i))\cap \mathcal{E}_h} W_e < \frac{p+q}{2}\alpha_n |\nei (\Psi(i))|\right)$.
 Note that $W_e$'s are not restricted to Bernoulli random variables.
	By tweaking the proof of conventional large deviation results~\cite{alon2004probabilistic} for Bernoulli variables, we obtain the following: 
\begin{lemma} \label{largedev}
	Let $S$ be the sum of $m$ mutually independent random variables taking values in $[0,1]$. For any $\delta>0$, we have $\Pr\left( S > (1+\delta) \ex [S] \right) \leq 	\exp\left( -\frac{\delta^2}{2+\delta}\ex [S] \right)$ and $\Pr\left( S < (1-\delta) \ex [S] \right) \leq \exp\left( -\frac{\delta^2}{2}\ex [S] \right)$.
\end{lemma}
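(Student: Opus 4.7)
The plan is to apply the standard Chernoff method, with the single modification that replaces the Bernoulli-specific MGF identity by a convexity bound valid for arbitrary $[0,1]$-valued summands. Let $X_1,\ldots,X_m$ be independent with $X_i\in[0,1]$, $S=\sum_i X_i$, and $\mu_i:=\ex[X_i]$, $\mu:=\ex[S]=\sum_i\mu_i$. For the upper tail, I start from Markov's inequality applied to $e^{tS}$ for $t>0$, giving $\Pr(S>(1+\delta)\mu)\leq e^{-t(1+\delta)\mu}\,\ex[e^{tS}]$, and factor the MGF across the independent $X_i$.

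The key step is the following convexity observation: since $x\mapsto e^{tx}$ is convex on $[0,1]$ for every $t\in\mathbb{R}$, and agrees with the linear function $(1-x)+x\,e^t$ at the endpoints $x=0$ and $x=1$, we have $e^{tX_i}\leq (1-X_i)+X_i\,e^t$ pointwise. Taking expectations yields $\ex[e^{tX_i}]\leq 1+\mu_i(e^t-1)\leq \exp\bigl(\mu_i(e^t-1)\bigr)$, and hence $\ex[e^{tS}]\leq \exp\bigl(\mu(e^t-1)\bigr)$. This is precisely the MGF bound one obtains in the Bernoulli setting, so from this point on the classical analytic argument carries over verbatim: optimizing the resulting exponent by choosing $t=\log(1+\delta)$ gives $\Pr(S>(1+\delta)\mu)\leq \exp\bigl(\mu(\delta-(1+\delta)\log(1+\delta))\bigr)$, and then the elementary inequality $(1+\delta)\log(1+\delta)-\delta\geq \delta^2/(2+\delta)$ for $\delta>0$ delivers the stated upper-tail bound.

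For the lower tail I apply the same procedure with $t<0$. The convexity estimate $\ex[e^{tX_i}]\leq \exp\bigl(\mu_i(e^t-1)\bigr)$ still holds (it was valid for all real $t$), so Markov again produces $\Pr(S<(1-\delta)\mu)\leq \exp\bigl(-t(1-\delta)\mu+\mu(e^t-1)\bigr)$; setting $t=\log(1-\delta)$ for $\delta\in(0,1)$ gives the exponent $\mu\bigl(-\delta-(1-\delta)\log(1-\delta)\bigr)$, and the standard inequality $-\delta-(1-\delta)\log(1-\delta)\leq -\delta^2/2$ yields the claimed lower-tail form. The case $\delta\geq 1$ is vacuous since $S\geq 0$. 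There is no real obstacle here: the only novelty relative to the textbook proof is recognizing that the Bernoulli MGF identity can be replaced by the convexity inequality $e^{tx}\leq 1-x+xe^t$, after which every subsequent estimate is identical.
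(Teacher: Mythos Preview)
Your proof is correct and follows essentially the same approach as the paper: both obtain the Bernoulli-type MGF bound $\ex[e^{tS}]\le \exp(\mu(e^t-1))$ via the convexity inequality $e^{tx}\le 1-x+xe^t$ on $[0,1]$ (the paper phrases it as $\frac{e^{\lambda x}-1}{x}\le e^{\lambda}-1$ and passes through an AM-GM step before applying $(1+y/n)^n\le e^y$, whereas you use $1+y\le e^y$ directly), then optimize in $t$ and invoke the same elementary logarithmic inequalities. The paper only writes out the upper tail and defers the lower tail to the ``conventional case,'' which you have supplied.
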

\begin{IEEEproof}
See Appendix~\ref{pf:largedev}.
\end{IEEEproof}

As $\ex [\sum_{e\in \nei (\Psi(i))\cap \mathcal{E}_h} W_e ]= (1-O(\gamma))p\alpha_n |\nei (\Psi(i))|$,
\begin{align*}
\frac{\frac{p+q}{2}\alpha_n |\nei (\Psi(i))|  }{\ex [\sum_{e\in \nei (\Psi(i))\cap \mathcal{E}_h} W_e]} -1 &= (1+O(\gamma)) \frac{q-p}{2p},
\end{align*}
 so Lemma~\ref{largedev}$-2)$ with $\delta = (1+O(\gamma)) \frac{p-q}{2p}$ gives 
\begin{align}
&R_1 \leq \exp\left( -\frac{(p-q)^2}{8p}\alpha_n(1+O(\gamma))|\nei (\Psi(i))| \right).  \label{int:1}
\end{align}
Next we consider $R_2$. Again, edges in $\nei (j)$, except $O(\gamma)$ fraction, are heterogeneous, so $|\nei (j)\cap \mathcal{E}_h^c| =(1-O(\gamma))|\nei (j)|$. The following lemma says that the contribution due to the $O(\gamma)$ fraction of edges is marginal:
\begin{lemma} \label{marginal}
	For sufficiently small $\gamma>0$,\begin{align}
	\Pr\left( \sum_{e\in \nei (j)\cap \mathcal{E}_h} W_e > \frac{p\alpha_n |\nei (j)| }{\sqrt{\log(1/\gamma)}} \right) =o(n^{-1})\,.
	\end{align}
\end{lemma}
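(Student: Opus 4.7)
\textbf{Proof proposal for Lemma~\ref{marginal}.}
The plan is to exploit two structural features: (i) the set $\nei(j)\cap\mathcal{E}_h$ is unavoidably tiny because every such edge must contribute $d-1$ vertices that are misclassified by $\phsc$; and (ii) the sample-splitting step makes the weights $\{W_e:e\in\mathcal{E}_2\}$ independent of $\phsc$, so Lemma~\ref{largedev}-1) will apply to $S:=\sum_{e\in\nei(j)\cap\mathcal{E}_h}W_e$ after conditioning on the first-stage randomness.

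First I would establish the deterministic bound $|\nei(j)\cap\mathcal{E}_h|=O((\gamma n)^{d-1})$. Any $e\in\nei(j)\cap\mathcal{E}_h$ contains $i$, has $e\setminus\{i\}\subset\phsc^{-1}(j)$, and is homogeneous with common label $\Psi(i)\neq j$; hence $e\setminus\{i\}\subset\phsc^{-1}(j)\cap\Psi^{-1}(\Psi(i))$. By the alignment $|\phsc^{-1}(j)\cap\Psi^{-1}(j)|/|\phsc^{-1}(j)|>1-\gamma$ that was used earlier in the proof of Theorem~\ref{thm:main2}, the intersection in question has cardinality $O(\gamma n)$, and the counting bound follows. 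A standard concentration estimate for the sample split further gives $|\nei(j)|=\Theta(n^{d-1})$ with high probability, so $|\nei(j)\cap\mathcal{E}_h|\leq C\gamma^{d-1}|\nei(j)|$ for a constant $C$ depending only on $d$ and $n_{\max}/n_{\min}$.

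Next I would apply Lemma~\ref{largedev}-1) conditionally on $(\phsc,\mathcal{E}_2)$. Since $\ex[S]\leq p\alpha_n|\nei(j)\cap\mathcal{E}_h|\leq Cp\alpha_n\gamma^{d-1}|\nei(j)|$, the threshold $t:=p\alpha_n|\nei(j)|/\sqrt{\log(1/\gamma)}$ dominates $\ex[S]$ by a factor $1+\delta:=t/\ex[S]\geq 1/\bigl(C\gamma^{d-1}\sqrt{\log(1/\gamma)}\bigr)$, which tends to $\infty$ as $\gamma\to 0^{+}$. For such large $\delta$ one has $\delta^{2}/(2+\delta)\sim\delta$, hence $\frac{\delta^{2}}{2+\delta}\ex[S]=(t-\ex[S])(1-o(1))=t(1-o(1))$, and Lemma~\ref{largedev}-1) gives
\[
\Pr(S>t)\leq\exp\!\left(-t\bigl(1-o(1)\bigr)\right),
\]
where the $o(1)$ is as $\gamma\to 0^{+}$. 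A direct substitution using the theorem's hypothesis together with $|\nei(j)|\sim(1-\beta)\binom{n_{\Psi(i)}}{d-1}$ yields $p\alpha_n|\nei(j)|\geq(8+\epsilon)\tfrac{p^{2}}{(p-q)^{2}}\log n\,(1-o(1))$; since $p/(p-q)>1$, choosing $\gamma$ so that $\sqrt{\log(1/\gamma)}<(8+\epsilon)p^{2}/(p-q)^{2}$ forces the exponent in the last display to exceed $(1+c)\log n$ for some $c>0$, and hence $\Pr(S>t)=o(n^{-1})$; unconditioning preserves the bound.

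The principal obstacle is the tuning of $\gamma$: it must be small enough that $\gamma^{d-1}\sqrt{\log(1/\gamma)}\to 0$ (so that $\ex[S]\ll t$ and the Chernoff exponent simplifies to $t$), but \emph{not} so small that $\sqrt{\log(1/\gamma)}$ outgrows the constant $(8+\epsilon)p^{2}/(p-q)^{2}$ provided by the theorem. These two constraints carve out a nonempty interval of admissible constants $\gamma>0$, and ``sufficiently small $\gamma$'' in the statement should be read as referring to this interval.
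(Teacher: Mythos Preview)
Your argument is sound and ultimately delivers the $o(n^{-1})$ bound, but it takes a different route from the paper's and, as you yourself note, it does \emph{not} prove the lemma in the literal sense of ``for all sufficiently small $\gamma$.'' The paper applies the sharper Chernoff form
\[
\Pr\bigl(S>(1+\delta)\ex[S]\bigr)\leq e^{-\{(1+\delta)\log(1+\delta)-\delta\}\ex[S]}
\]
(stated separately as Corollary~\ref{largedevpp}), whereas you use Lemma~\ref{largedev}-1), whose exponent is only $\frac{\delta^{2}}{2+\delta}\ex[S]\sim\delta\,\ex[S]$. With $t=p\alpha_n|\nei(j)|/\sqrt{\log(1/\gamma)}=\Theta(\log n)/\sqrt{\log(1/\gamma)}$ and $\ex[S]=O(\gamma)\,p\alpha_n|\nei(j)|$, the paper's exponent picks up the extra factor $\log(1+\delta)\sim\log(1/\gamma)$ and becomes $\Theta\bigl(\sqrt{\log(1/\gamma)}\bigr)\log n$, which \emph{improves} as $\gamma\to0^{+}$. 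Your exponent is $t(1-o(1))=\Theta\bigl(1/\sqrt{\log(1/\gamma)}\bigr)\log n$, which \emph{degrades} as $\gamma\to0^{+}$; this is exactly why you are forced to confine $\gamma$ to a two-sided interval and to invoke the specific constant $(8+\epsilon)p^{2}/(p-q)^{2}$ from the theorem's hypothesis to close the argument.

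What each approach buys: the paper's version establishes the lemma for every $\gamma$ below a threshold, with no dependence on the slack $\epsilon$ or on $p,q$, so it can later be combined with the $(1+O(\gamma))$ corrections in the $R_1,R_2$ estimates regardless of how small $\epsilon$ is. Your version is self-contained with Lemma~\ref{largedev} (avoiding the auxiliary Corollary~\ref{largedevpp}) and exploits the tighter count $|\nei(j)\cap\mathcal{E}_h|=O(\gamma^{d-1})|\nei(j)|$ (the paper only uses $O(\gamma)$), but it proves a logically weaker statement whose sufficiency for Theorem~\ref{thm:main2} requires the additional check that the admissible window for $\gamma$ overlaps the smallness demanded by the $O(\gamma)$ terms elsewhere in the proof. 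Switching to the $(1+\delta)\log(1+\delta)$ bound would remove the lower constraint on $\gamma$ entirely and give the lemma as stated.
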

\begin{IEEEproof}
See Appendix~\ref{pf:marginal}.
\end{IEEEproof}

Hence, we focus on heterogeneous edges only.
Making a similar argument as above, the bound in Lemma~\ref{largedev} becomes
\begin{align}
&R_2 \leq \exp\left( - \frac{\frac{(p-q)^2}{4q^2}}{2+ \frac{p-q}{2q}} q\alpha_n(1+O(\gamma))|\nei (j)|  \right)\\
&\overset{(a)}{\leq}\exp\left( - \frac{1}{8}\frac{(p-q)^2}{p} \alpha_n(1+O(\gamma))|\nei (j)|  \right)\,,  \label{int:2}
\end{align}
where ($a$) follows since $\frac{1}{2+\frac{p-q}{2q}} = \frac{1}{\frac{3}{2}+\frac{p}{2q}} = \frac{1}{\left(\frac{3}{2}\frac{q}{p}+\frac{1}{2}\right)\frac{p}{q}} \geq \frac{1}{2\frac{p}{q}}$.

Since $\frac{(p-q)^2}{p}\binom{n}{d}\alpha_n \geq (8 +\epsilon)\frac{\left(n/n_{\min}\right)^{d-1}}{d} n\log n$, a straightforward calculation yields
$\frac{1}{8}\frac{(p-q)^2}{p} \alpha_n(1+O(\gamma))\binom{n_{\min}-1}{d-1} \geq \left(1+\frac{1}{16}\epsilon\right) \log n$, for sufficiently large $n$.
Thus, $R_1$ and $R_2$ are both $o(n^{-1})$ from \eqref{int:1} and \eqref{int:2}.

\section{Discussion}\label{sec:dis}

 We have shown that our algorithms can achieve the order-optimal sample complexity for all different recovery guarantees under a symmetric block model.
In this section, we show that our main results indeed hold for a broader class of block models. 
We also show that HSCLR can achieve the sharp recovery threshold for a certain SBM model.

\subsection{Extensions}
For the graph case~\cite{abbe2017community}, a fairly general model, which subsumes as a special case the asymmetric SBM, has been investigated. Here we extend our model to one such model but in the context of hypergraphs.  Specifically, we consider the following asymmetric weighted SBM. 
\begin{definition}[The asymmetric weighted SBM] \label{def:asym_wsbm} 
 Let $\{p_e\}_{e\in\mathcal{E}}$ be constants such that $p_e > p_{e’}$ holds for any homogeneous edge $e$ and heterogeneous edge $e’$.
 A random weight is assigned to each edge independently as follows:
  For each edge $e \in \mathcal{E}$, 
$\ex[W_e] =p_e \alpha_n$. Notice that this reduces to the condition of $p > q$ in the symmetric setting. 
\end{definition}

We find that our main results stated in Thm.~\ref{thm:main} and \ref{thm:main2} readily carry over the above asymmetric setting. The key rationale behind this is that our spectral clustering guarantee hinges only upon the full-rank condition on $\mathbf{B}$ (see Sec.~\ref{hsc} for the definition). Here, what one can easily verify is that the condition above implies the full-rank condition, and hence our results hold even for the asymmetric setting. The only distinction here is that the constants that appear in the theorems depend now on $p_e$'s.
Similarly, our technique can cover \emph{disassortative} SBM in which heterogeneous edges have larger weights than homogeneous edges.

\begin{definition}[The symmetric disassortative weighted SBM] \label{def:sym_dissbm}
	In Definition~\ref{def:sSBM}, we assume instead that $0<p<q<1$.
\end{definition}

Another prominent instance is the planted clique model.
\begin{definition}[The planted clique model] \label{def:clique}
	  Fix $s$-subset of nodes $C$ ($s\leq n$). Consider a random hypergraph in which every $d$-regular edge $e = \{i_1,i_2,\ldots, i_d \}$ appears with probability $1$ if $e \subseteq C$ or $\frac{1}{2}$ otherwise.
\end{definition}
In this model, one wishes to detect the hidden subset $C$, which is called the \emph{clique}. 
Following a similar analysis with a different notion of error fraction, one can show that the clique can be detected if $s\geq c^*\cdot \sqrt{n}$ for some constant $c^*$, which is consistent with the well-known result for $d=2$~\cite{alon1998finding}.

\subsection{Sharpness} \label{subsec:sharp}

Recently, sharp thresholds on the fundamental limits are characterized in the graph case~\cite{abbe2015community,abbe2015detection,abbe2016exact,gao2015achieving,zhang2016minimax}.
In contrast, such a tight result has been widely open in the hypergraph case. 
A notable exception is our companion paper~\cite{ahn2016community} which studies a special case of the weighted SBM (considered herein), in which weights are \emph{binary}-valued.

\begin{definition}[Generalized Censored Block Model with Homogeneity Measurements~\cite{ahn2016community}] \label{def:sc}   Let $\theta \in(0,1/2)$ be a fixed constant. Assume that $k=2$ and denote erasure by $\era$.
If the edge $e$ is homogeneous, $W_e = 1$ w.p. $\alpha_n (1-\theta)$, $W_e = 0$ w.p. $\alpha_n \theta$, and $W_e = \era$ w.p. $1-\alpha_n$.
Otherwise, $W_e = 1$ w.p. $\alpha_n \theta$, $W_e = 0$ w.p. $\alpha_n (1-\theta)$, and $W_e = \era$ w.p. $1-\alpha_n$. 
\end{definition}

The information-theoretic limit for strong consistency has been characterized under this model, formally stated below.
\begin{proposition} \label{prop2}
	(Thm.~1 in \cite{ahn2016community}) Under the model in Definition~\ref{def:sc}, the maximum likelihood estimator is strongly consistent for any given hidden partition $\Psi$ if $\binom{n}{d}\alpha_n \geq (1+\epsilon)\frac{2^{d-2}}{d} \frac{n \log n}{(\sqrt{1-\theta}-\sqrt{\theta})^2}$ for any constant $\epsilon>0$.
	Conversely, if $\binom{n}{d}\alpha_n \leq (1-\epsilon)\frac{2^{d-2}}{d} \frac{n \log n}{(\sqrt{1-\theta}-\sqrt{\theta})^2}$, no algorithm can be strongly consistent for any given hidden partition $\Psi$.
\end{proposition}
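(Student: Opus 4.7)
The plan is to reduce both directions to a single-node genie-aided hypothesis test, following the paradigm of \cite{abbe2016exact,abbe2015community} developed for graph SBMs. The key quantity is the Bhattacharyya coefficient between the homogeneous edge-weight distribution $P_{\text{hom}}$ and the heterogeneous one $P_{\text{het}}$, which a direct computation on the three-point supports $\{0,1,\era\}$ gives as
\begin{align*}
B \;=\; 2\alpha_n\sqrt{\theta(1-\theta)}+(1-\alpha_n) \;=\; 1 - \alpha_n\left(\sqrt{1-\theta}-\sqrt{\theta}\right)^2,
\end{align*}
using the identity $\left(\sqrt{1-\theta}-\sqrt{\theta}\right)^2 = 1-2\sqrt{\theta(1-\theta)}$.

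For achievability, since $k=2$, the ML estimator misclassifies iff some subset of nodes can be flipped without decreasing the likelihood; a standard argument (e.g., \cite{abbe2015community}) shows that near the threshold the dominating error event is a single-node flip. Fixing a node $i$, the log-likelihood ratio between its true and flipped label decomposes as a sum of i.i.d.\ contributions from the \emph{critical edges}---the $\binom{n_1-1}{d-1}+\binom{n_2}{d-1}$ edges of the form $\{i\}\cup S$ with $S\subset \Psi^{-1}(\ell)$, $\ell\in\{1,2\}$---since these are precisely the edges whose homogeneity status flips. A Chernoff bound with the symmetric tilting $s=1/2$ (which is optimal by the symmetry between $P_{\text{hom}}$ and $P_{\text{het}}$) yields a single-node error probability at most $B^{\binom{n_1-1}{d-1}+\binom{n_2}{d-1}}$. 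Using $\binom{n_1-1}{d-1}+\binom{n_2}{d-1} \approx 2\binom{n/2}{d-1} = \frac{d}{2^{d-2}}\cdot\binom{n}{d}/n$, the hypothesized sample complexity is exactly what forces this exponent to exceed $(1+\epsilon/2)\log n$, and a union bound over the $n$ nodes closes the argument.

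For the converse, let $X_i$ indicate that flipping the label of node $i$ produces a partition whose likelihood is at least as large as that of the true $\Psi$. Since the MAP estimator is information-theoretically optimal, $\pr(\text{MAP fails}) \geq \pr(\sum_i X_i \geq 1)$, so it suffices to show the latter is bounded away from $0$ under the converse regime. A matching Chernoff--Stein lower bound on $\pr(X_i=1)$ gives $\ex[X_i] \geq \tfrac{1}{2} B^{(1+o(1)) \cdot 2\binom{n/2}{d-1}}$, so $\ex[\sum_i X_i]\to\infty$ once $\binom{n}{d}\alpha_n \leq (1-\epsilon)\frac{2^{d-2}}{d}\cdot\frac{n\log n}{(\sqrt{1-\theta}-\sqrt{\theta})^2}$. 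A Paley--Zygmund (second-moment) argument then yields $\pr(\sum_i X_i \geq 1) \to 1$, precluding strong consistency.

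The main technical hurdle will be the second-moment computation in the converse: critical-edge sets of two distinct same-cluster nodes $i,i'$ overlap in $\binom{n_\ell-2}{d-2}$ edges, a much heavier overlap than the single shared edge present when $d=2$. Controlling $\mathrm{Var}(\sum_i X_i)$ therefore demands a careful combinatorial decomposition of the joint distribution of $(X_i,X_{i'})$ conditional on the cluster assignments of $i$ and $i'$, together with a demonstration that the correlated contribution is dominated by the independent one at the threshold. A secondary subtlety is that, since the single-node error exponent must be matched to $1-o(1)$ in the exponent, the Chernoff--Stein lower bound on the sum of i.i.d.\ log-likelihoods with the three-valued support $\{0,1,\era\}$ must be derived directly (e.g., via a tilted-measure change-of-measure) rather than invoked as a black box.
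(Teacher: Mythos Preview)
The paper does not prove Proposition~\ref{prop2}; it is explicitly attributed to Theorem~1 of the companion paper~\cite{ahn2016community} and is merely quoted here without proof. There is therefore no ``paper's own proof'' to compare your proposal against. What the present paper \emph{does} prove (in the appendix labeled \texttt{pf:info}) is Theorem~\ref{thm:info}, namely that the polynomial-time algorithm HSCLR achieves the same threshold --- a different statement from the information-theoretic one in Proposition~\ref{prop2}.

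That said, your sketch is a sensible outline of how one would establish such a result, and the concrete computations are right: the Bhattacharyya coefficient is indeed $1-\alpha_n(\sqrt{1-\theta}-\sqrt{\theta})^2$, and the critical-edge count $2\binom{n/2}{d-1}=(1+o(1))\frac{d}{2^{d-2}}\cdot\frac{1}{n}\binom{n}{d}$ is exactly what produces the stated constant. Two caveats if you intend to carry this out in full rather than cite it. First, on the achievability side, ``a standard argument shows the dominating error event is a single-node flip'' hides real work: the ML error event is a union over all $2^{n-1}$ nontrivial label flips, and at a \emph{sharp} threshold the multi-node flips must be shown to contribute negligibly after the union bound --- this requires a separate combinatorial argument (as in~\cite{abbe2016exact}) and is where most of the effort lies. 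Second, you correctly flag the converse second-moment step as the hard part; the overlap between the critical-edge sets of two same-cluster nodes is $\Theta(n^{d-2})$ edges rather than the single edge one has when $d=2$, so the variance control is genuinely more involved than in the graph case.
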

Using our results, we can show that there is no computational barrier under this model. 
We now state the theorem, deferring the proof to Appendix~\ref{pf:info}.
\begin{theorem} \label{thm:info}
	HSCLR\footnote{Indeed, there should be some minor tweaks to make HSCLR better adapted to this model. See Appendix~\ref{pf:info} for details. }  achieves the information-theoretic limits characterized in Proposition~\ref{prop2}.
\end{theorem}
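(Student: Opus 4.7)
The plan is to tailor HSCLR to the binary-weight, two-cluster model of Definition~\ref{def:sc} by replacing the averaging-based refinement with a \emph{log-likelihood ratio} scoring rule, whose sharp Chernoff exponent coincides with the Bhattacharyya coefficient $(\sqrt{1-\theta}-\sqrt{\theta})^2$ appearing in Proposition~\ref{prop2}.

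First, I would keep the sample-splitting and spectral initialization of Algorithm~\ref{alg2} intact: run HSC on $\mathcal{H}_1 = ([n],\{W_e\}_{e\in\mathcal{E}_1})$ with $\beta = \log\log n/\log n$. Under the hypothesized sample complexity, $\beta\binom{n}{d}\alpha_n = \omega(n)$, so Corollary~\ref{weakc} yields a weakly consistent $\phsc$ that agrees with $\Psi$ (up to relabeling) on all but a $\gamma = o(1)$ fraction of nodes. I would then modify the refinement rule~\eqref{refinerule} to
$$\phsclr(i) = \argmax_{j\in[k]}\ S_i(j),\qquad S_i(j):= \sum_{e\in\nei(j)}\xi(W_e),$$
with $\xi(1)=\log\tfrac{1-\theta}{\theta}$, $\xi(0)=-\log\tfrac{1-\theta}{\theta}$, $\xi(\era)=0$, i.e., the exact single-edge log-likelihood ratio between the homogeneous and heterogeneous distributions.

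For the error analysis, since $k=2$ the misclassification event for node $i$ reduces to $\{S_i(\Psi(i)) < S_i(j^\star)\}$ for the unique $j^\star\neq\Psi(i)$. Conditioning on $\mathcal{E}_1$ freezes $\phsc$ and hence the index sets $\nei(\cdot)$; since $\mathcal{E}_2$ is independent, standard Chernoff tilting with parameter $\lambda=1/2$ applies. A direct computation gives, for a truly homogeneous edge,
$$\ex\bigl[e^{-\xi(W_e)/2}\bigr] = 2\alpha_n\sqrt{\theta(1-\theta)} + 1 - \alpha_n = 1 - \alpha_n(\sqrt{1-\theta}-\sqrt{\theta})^2 \leq \exp\bigl(-\alpha_n(\sqrt{1-\theta}-\sqrt{\theta})^2\bigr),$$
and symmetrically $\ex[e^{\xi(W_e)/2}]$ for a heterogeneous edge, so the exponent $(\sqrt{1-\theta}-\sqrt{\theta})^2$ emerges from both sides simultaneously. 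Combining across the $|\nei(j)| = (1-O(\gamma))\binom{n_{\min}-1}{d-1}$ edges in each score and using $n_{\min}=n/2$ together with the identity $\binom{n/2-1}{d-1} \sim d\binom{n}{d}/(n\cdot 2^{d-1})$, this produces
$$\Pr(\phsclr(i)\neq\Psi(i)) \leq \exp\Bigl(-(1-O(\gamma))\tfrac{d}{2^{d-2}}\tfrac{\binom{n}{d}\alpha_n}{n}(\sqrt{1-\theta}-\sqrt{\theta})^2\Bigr),$$
which is $o(1/n)$ precisely when $\binom{n}{d}\alpha_n \geq (1+\epsilon)\frac{2^{d-2}}{d}\frac{n\log n}{(\sqrt{1-\theta}-\sqrt{\theta})^2}$. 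A union bound over the $n$ nodes concludes achievability; the converse direction is inherited directly from Proposition~\ref{prop2}.

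The main obstacle is that the sums $S_i(j)$ are \emph{not} sums of i.i.d.\ variables once $\phsc$ has errors: a $\gamma$-fraction of the edges in $\nei(\Psi(i))$ actually connect $i$ to nodes outside its true community (and symmetrically for $\nei(j^\star)$), so those $\xi(W_e)$ follow the ``wrong'' distribution and would in isolation push the score toward a misclassification. However, because $\gamma\to 0$, this contamination perturbs each Chernoff moment generating factor by only a multiplicative $(1\pm O(\gamma))$, degrading the exponent by a vanishing amount which is absorbed into the $(1+\epsilon)$ slack in the hypothesis. A secondary subtlety, namely controlling the two score tails jointly and bounding the contribution of the misclassified edges, is handled by an $R_1{+}R_2$-style split analogous to the proof of Theorem~\ref{thm:main2} and by the sample-splitting trick that ensures independence between $\phsc$ and the refinement statistics.
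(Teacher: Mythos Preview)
Your proposal is correct and follows essentially the same approach as the paper. Both replace the averaging refinement with a likelihood-based rule (your LLR score $S_i(j)$ is equivalent, up to a constant factor, to the paper's Hamming-distance comparison $\dd(\mathbf{W},\Ye(\mathbf{X}))$ versus $\dd(\mathbf{W},\Ye(\mathbf{X}\oplus\mathbf{e}_i))$), apply Chernoff tilting at $\lambda=1/2$ to extract the Bhattacharyya exponent $(\sqrt{1-\theta}-\sqrt{\theta})^2$, separate the contaminated edges due to first-stage errors and show their contribution is negligible, and finish with a union bound over the $n$ nodes.
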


\section{Application} \label{sec:appl}
 In this section, based on our algorithms, we design a sketching algorithm for subspace clustering.

\begin{figure*}[t!]
	\centering
	\begin{subfigure}[c]{0.20\textwidth}
		\centering
		\includegraphics[width=\columnwidth]{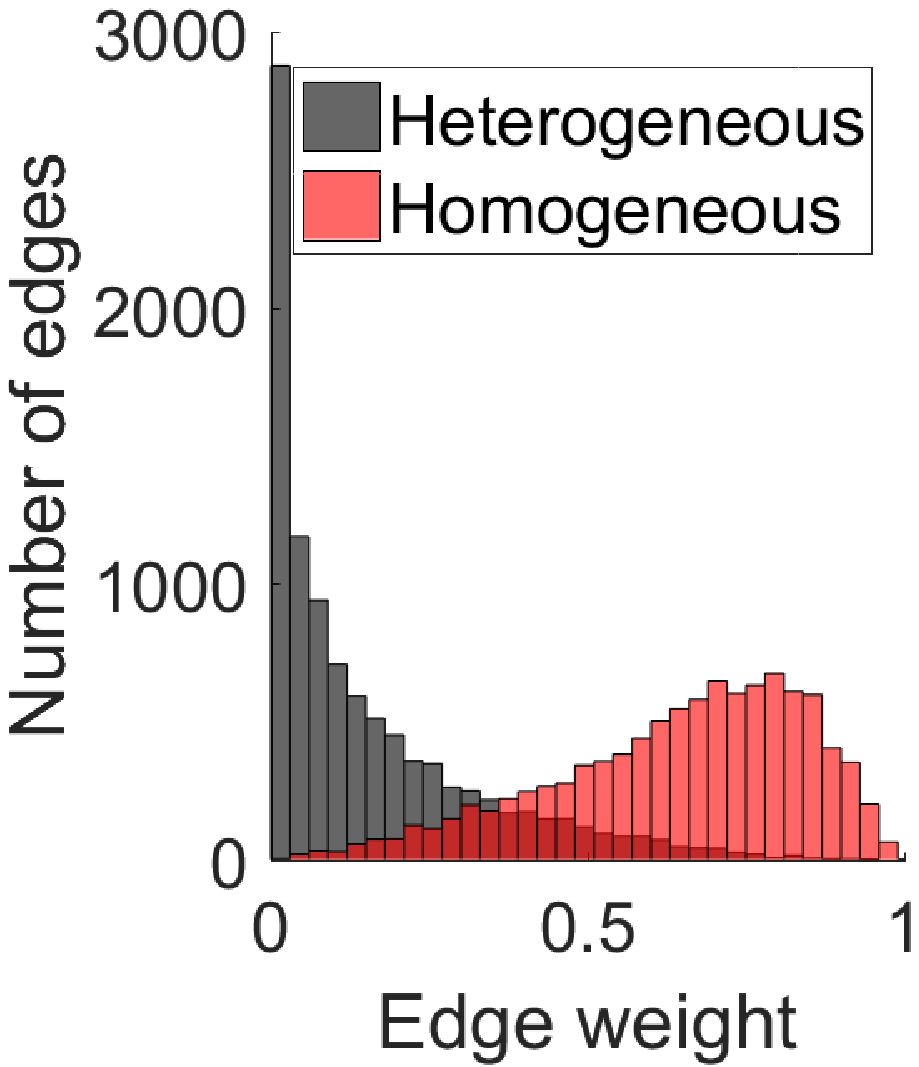}
		\caption{} \label{icmlwe}
	\end{subfigure}
	\hspace{-0.2in}
	\begin{subfigure}[c]{0.5\textwidth}
		\centering		\includegraphics[width=\columnwidth]{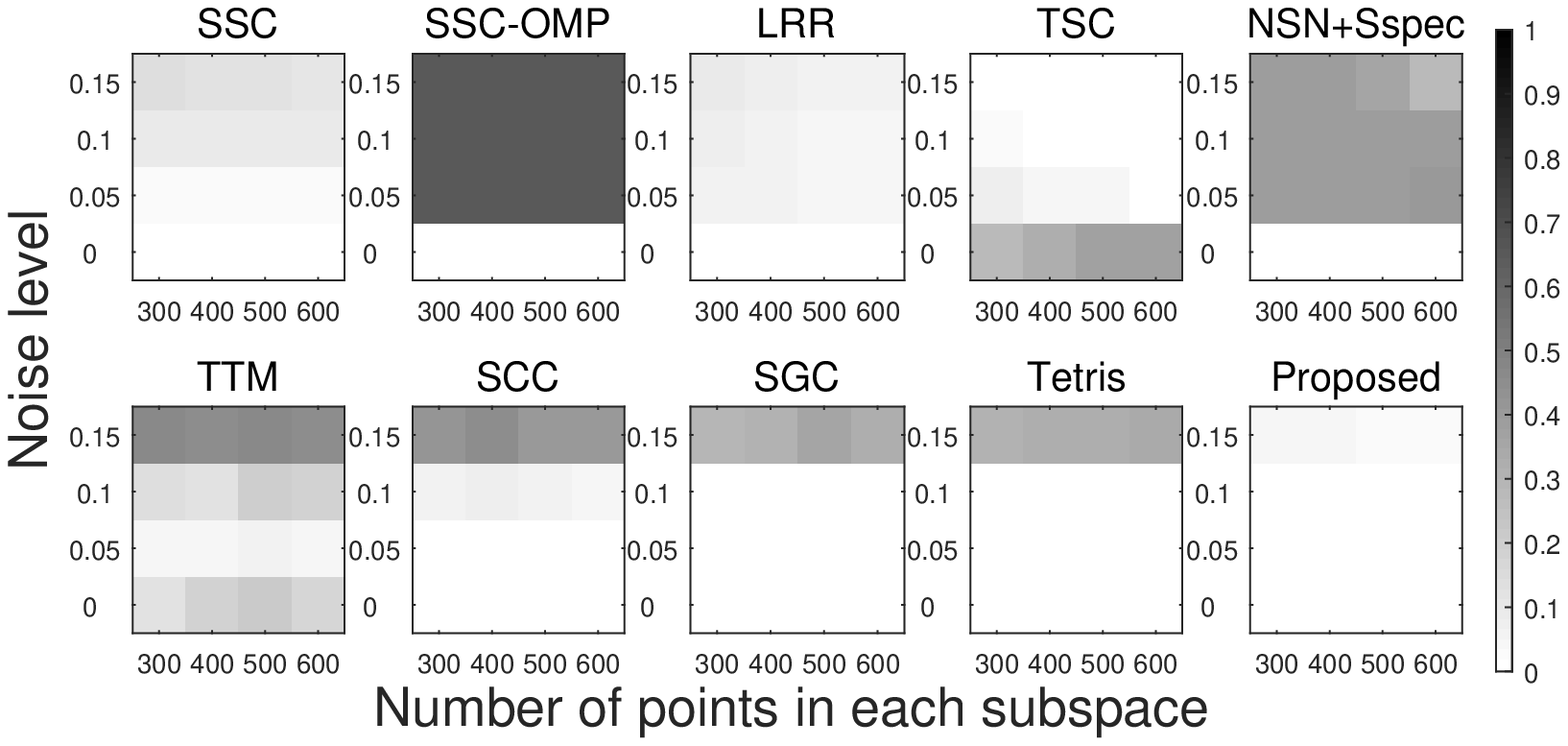}
		\caption{}\label{fig:error}
	\end{subfigure}
	\hspace{-0.2in}
	\begin{subfigure}[c]{0.28\textwidth}
		\centering		\includegraphics[width=\columnwidth]{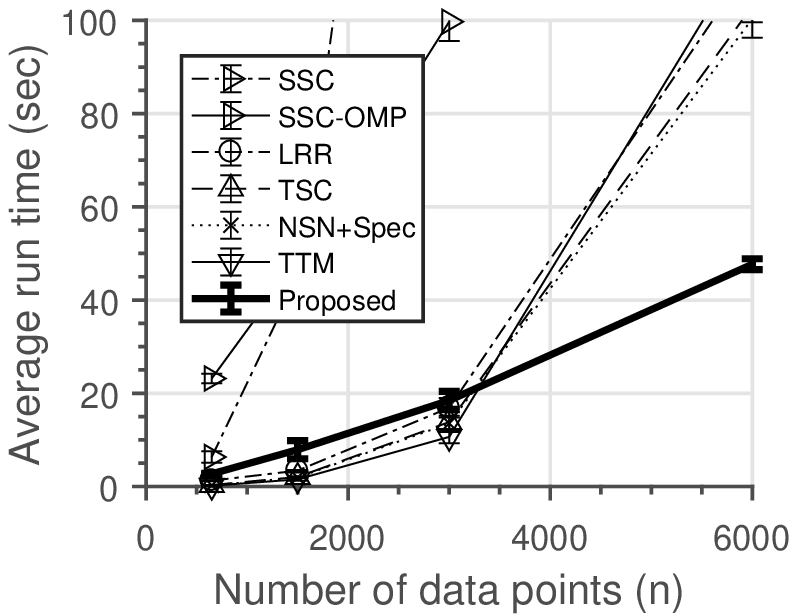}
		\caption{}\label{icmlspeed}
	\end{subfigure}
	\caption{\footnotesize{ (a) Distribution of edge weights for the Hopkins 155 data set. Notice that homogeneous edges have larger weights on average. This implies that the hypergraph constructed from tensor-based approaches respects our model. (b) Fractional error of various algorithms. We report the fractional error of each algorithm for varying $n/k$ and $\sigma$ \rev{}{(a lighter color implies a lower error fraction)}. Note that we include iterative algorithms (SCC, SGC, Tetris) although they cannot be utilized in the sketching scenario. We can see that our approach has a comparable performance to the state of the arts. (c) Average run time comparison with prior subspace clustering algorithms. We can observe that our proposed algorithm scales nearly linearly in $n$ while others do not.}}
	\vskip -0.25in
\end{figure*}

\subsection{Subspace clustering} 
\label{subsec:con} 
It is well known that hypergraph clustering is closely related to computer vision applications such as subspace clustering~\cite{agarwal2006higher}.
In  the subspace clustering problem, one is given  with  $n$ data points $\mathbf{x}_1,\mathbf{x}_2,\ldots, \mathbf{x}_n \in \mathbb{R}^{\ell}$ in a high dimensional ambient space.
 The $n$ data points are partitioned into $k$ groups, and data points in the same group approximately lie on the same subspace, each of dimension at most $m<\ell$. 
 The goal is to recover the hidden partition of the $n$ data points based on certain measurements. 
Among various approaches, tensor-based algorithms measure \emph{similarities between the data points} to recover the cluster~\cite{govindu2005tensor, agarwal2006higher, chen2009spectral}. 
 More specifically, they construct a weighted $d$-uniform ($d\geq m+2$) hypergraph $([n], \{W_e\}_{e\in \mathcal{E}})$, in which each edge weight represents the similarity of the corresponding $d$ points.
One typical approach to measure the similarity between $d$ data points is based on the hyperplane fitting. 
More specifically, denoting by $\fit(\cdot)$ the error of fitting an $m$-dimensional affine subspace to $d$ data points, one may set $W_e = \exp\left(-\fit(\mathbf{x}_{i_1},\ldots ,\mathbf{x}_{i_d}) \right)$ for $e =\{i_1,i_2,\cdots, i_d\}$.
Note that $W_e \simeq 1$ if the $d$ data points are approximately on the same subspace, and $W_e \simeq 0$ if the data points cannot be fit on a single subspace.

Consider a set of $d$ data points of the same cluster, which approximately lie on the same subspace by definition.
The edge weight corresponding to these $d$ data points will be approximately $1$, and one may model the edge weight as a random value whose expected value is close to $1$. 
Similarly, one may model the edge weights of heterogeneous edges by a random variable whose expected value is close to $0$.\footnote{Indeed, the edge weight of a heterogeneous edge can be very close to $1$, i.e., the fitting error can be close to $0$. 
	This may happen when $d$ data points, which are from different subspaces, are well aligned with another single subspace.
	Such a coincidence, however, happens with very low probability, and hence we simply treat these atypical events as statistical noise.} 

Clearly, our weighted SBM can precisely capture the above hypergraph model since our model only assumes that the average weights of homogeneous edges are larger than those of heterogeneous edges. 
We verify this claim using a real data set.  
Hopkins 155 is the most widely used dataset for the subspace clustering problem~\cite{tron2007benchmark}.
We first set $d=8$ and $m=3$, and then randomly sample $10000$ homogeneous edges and $10000$ heterogeneous edges. 
The  empirical  distributions of edge weights are shown in Fig.~\ref{icmlwe}. 
We can see that the homogeneous edges have larger weights on average than the heterogeneous edges, well respecting the weighted SBM.

\subsection{Sketching algorithms for subspace clustering}
Modern subspace clustering algorithms involve a large number of data points lying on a high-dimensional space, i.e., $n$ and $\ell$ are very large.
Hence, storing the entire raw data points is prohibitive, and one may have to resort to the \emph{sketch} of the data set. 
A sketch can be viewed as a summary of the dataset, containing sufficient information of the data set.

\rev{}{As evidenced by the preceding section, we assume that the weighted hypergraph constructed from the data points follows the model in Sec.~\ref{sec:model}. Under this assumption, subspace clustering can be done by clustering nodes of the weighted hypergraph.} The following corollary asserts that one can exactly solve the subspace clustering problem with a sketch consisting of the weights of randomly chosen hyperedges.\footnote{\rev{}{We note that one may carefully choose similarity entries in order to achieve a more informative sketch than our random one, at the cost of increased computational complexity for sketch construction.}}
We now state a corollary, a consequence of Thm.~\ref{thm:main} and \ref{thm:main2}. 
\begin{corollary} \label{coro:sketch}
	Suppose that $n_{max}/n_{min} =O(1)$ and $\alpha_n=1$. Then, HSC is weakly consistent if $\binom{n}{d}s_n=\omega(n)$, and HSCLR is strongly consistent if $\binom{n}{d}s_n\geq c_8 \cdot n\log n$ for some constant $c_8>0$. Moreover, the computational complexities of HSC, HSCLR reduce to $\max\{\binom{n}{d}s_n, n(\log n)^k\}$.
\end{corollary}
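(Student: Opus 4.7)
The plan is to recast the sketching scenario as a fresh instance of the weighted SBM whose sparsity parameter is the sampling rate $s_n$, and then invoke Thm.~\ref{thm:main} and Thm.~\ref{thm:main2} verbatim. For each $e \in \mathcal{E}$, let $B_e \sim \bern(s_n)$ be an independent indicator of whether the weight of edge $e$ is included in the sketch, and set the sketched weight $\tilde{W}_e := B_e \cdot W_e$ (with unobserved edges treated as $0$). Since $\alpha_n = 1$ in the original model, $\ex[\tilde{W}_e] = p \cdot s_n$ for homogeneous edges and $\ex[\tilde{W}_e] = q \cdot s_n$ for heterogeneous edges, with $p > q$, and all $\tilde{W}_e$ are mutually independent and take values in $[0,1]$. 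Hence the sketched hypergraph $([n], \{\tilde{W}_e\}_{e\in\mathcal{E}})$ is itself distributed as the weighted SBM$(p, q, s_n)$ of Definition~\ref{def:sSBM}.

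Given this reduction, the consistency guarantees follow immediately by substituting $\alpha_n \leftarrow s_n$ in our prior theorems. Applying Corollary~\ref{weakc} yields that HSC run on the sketched hypergraph is weakly consistent whenever $\binom{n}{d}s_n = \omega(n)$. Applying Thm.~\ref{thm:main2} (with the assumption $n_{\max}/n_{\min}=O(1)$ absorbing the $(n/n_{\min})^{d-1}$ factor into the constant) yields that HSCLR is strongly consistent whenever $\binom{n}{d}s_n \geq c_8 \cdot n\log n$, where $c_8$ depends only on $p$, $q$, $k$, and $n_{\max}/n_{\min}$.

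For the complexity claim, the point is that only the $O(\binom{n}{d}s_n)$ sampled edges contribute nonzero terms to the similarity matrix $\mathbf{A}$, so constructing and storing $\mathbf{A}$ (and hence $\mathbf{A}^0$) requires $O(\binom{n}{d}s_n)$ time rather than $O(n^d)$. Since each sampled edge contributes to at most $d(d-1)$ entries, $\mathbf{A}^0$ has $O(\binom{n}{d}s_n)$ nonzero entries, so a sparse power iteration computes the top $k$ eigenvectors in time proportional to $k \log n$ matrix-vector multiplies, each of cost $O(\binom{n}{d}s_n)$; the approximate geometric $k$-clustering step still costs $O(n(\log n)^k)$. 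Summing and absorbing logarithmic factors gives the stated $\max\{\binom{n}{d}s_n,\, n(\log n)^k\}$ bound.

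The main obstacle is the complexity accounting rather than the consistency statements: one must verify that (i) the eigengap guaranteed by Lemma~\ref{lem:spec} at the sparsity level $s_n$ is large enough for power iteration to converge in $O(\log n)$ steps, and (ii) the local refinement step of HSCLR, whose cost is proportional to the number of observed edges incident to each node, indeed sums to $O(\binom{n}{d}s_n)$ after sketching. Both of these follow by retracing the complexity analyses of HSC and HSCLR already given in Sec.~\ref{hsc} after replacing $|\mathcal{E}|$ with the expected number of sampled edges $\binom{n}{d}s_n$.
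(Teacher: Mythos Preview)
Your proposal is correct and takes essentially the same approach as the paper, which simply states the corollary as ``a consequence of Thm.~\ref{thm:main} and \ref{thm:main2}'' without spelling out any details. Your reduction---treating the sketched hypergraph $([n],\{B_e W_e\})$ as a fresh weighted SBM$(p,q,s_n)$ and then invoking Corollary~\ref{weakc} and Thm.~\ref{thm:main2}---is exactly the intended argument, and your complexity accounting (replacing $|\mathcal{E}|$ by the number of sampled edges and exploiting sparsity in the power iteration) is more explicit than anything the paper provides.
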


\rev{}{\begin{remark}
One can sketch data more aggressively if the subspaces are not similar to each other~\cite{heckel2017dimensionality}.
This is also captured in Corollary~\ref{coro:sketch} as follows.
As a concrete example, consider two subspaces of dimension $m$ and a heterogeneous edge $e$.
When the two subspaces are moving farther away from each other, the fitting error of $e$ increases. Thus, $W_e$ approaches $0$, and hence $p-q$ increases. 
Since the sample complexity is inversely proportional to $p-q$,\footnote{ The sufficient condition in Thm.~\ref{thm:main2} reads $\frac{(p-q)^2}{p}\binom{n}{d}s_n\geq C$ for some quantity $C$.} one can sketch more aggressively.
\end{remark}}

Corollary~\ref{coro:sketch} implies that our sketching method can reduce the storage overhead from $O(n\ell)$ to $O(n\log n)$.
We now evaluate our sketching algorithm.
The relevant parameters are $n$, $k$, $\ell$, $m$, $d$, and $s_n$: in an ambient dimension of $\ell=50$, we randomly generate $k$ subspaces each being of dimension of $m=3$; for each subspace, we randomly sample $n/k$ points and perturb every point with  Gaussian noise of variance $\sigma^2$; we set edge size $d=5$ and sampling probability $s_n$.
We first implement HSCLR in MATLAB\footnote{We observe a large constant in the computational complexity of the geometric $k$-clustering, and hence we implement HSCLR with an efficient $k$-means algorithms for the experiments.}.
We then compare HSCLR with other prior algorithm\footnote{Sparse subspace clustering (SSC)~\cite{elhamifar2013sparse}, a variant of SSC using OMP (SSC-OMP)~\cite{dyer2013greedy}, subspace clustering using low-rank representation (LRR)~\cite{liu2013robust}, thresholding-based subspace clustering (TSC)~\cite{heckel2015robust}, subspace clustering using nearest neighborhood search (NSN+Spec)~\cite{park2014greedy}, and tensor trace maximization (TTM)~\cite{JMLR:v18:16-100}. Note that SCC~\cite{chen2009spectral}, SGC~\cite{jain2013efficient}, and Tetris~\cite{JMLR:v18:16-100}) are not applicable to the sketching scenario due to their iterative natures.}, adopting the experimental setups from~\cite{park2014greedy} and \cite{JMLR:v18:16-100}.

We first measures the performance of various algorithms.
We set $k=3$ and $\binom{n}{d}s_n =5k^{d-1} n\log n /d$, and report the average fractional errors of each algorithm over $20$ trials for $(n/k, \sigma) \in \{300,400,500,600\}\times \{0, 0.05, 0.1, 0.15\}$ in Fig.~\ref{fig:error}.
Observe that our algorithm matches the state-of-the-art performance.
We also measures the run time of the algorithms.
We set $k=2$, $\sigma = 0.025$, $n\in \{750, 1500, 3000, 6000\}$,  $\binom{n}{d}s_n$ $=5k^{d-1} n\log n /d$, and report the average run time over $10$ trials.  
Fig.~\ref{icmlspeed} shows that the runtime of our proposed algorithm scales nearly linearly in $n$.

\rev{}{\subsection{Other applications}
Apart from subspace clustering, there are many applications in which $d$-wise similarities can carry more information than pairwise ones.
Those include other computer vision applications (such as geometric grouping~\cite{chen2009spectral,govindu2005tensor} and high-order matching~\cite{duchenne2011tensor}), tagged social networks~\cite{ghoshal2009random}, biological networks~\cite{michoel2012alignment} and co-authorship networks~\cite{yang2015defining}.
We remark that while our model assumes equal-sized hyperedges, the HSCLR algorithm is applicable even when the size of hyperedges vary, which is the case for some of these applications.
However, the success of the refinement step is contingent upon whether or not the average weight of homogeneous edges is larger than that of heterogeneous edges.
While this assumption is shown to hold for the subspace clustering problem, whether or not this assumption holds for the other applications is an interesting future direction.}

\section{Conclusion} \label{sec:concl}
	In this paper, we develop two hypergraph clustering algorithms: HSC and HSCLR.
	Our main contribution lies in performance analysis of them under a new hypergraph model, which we call the weighted SBM.
	Our results improve upon the state of the arts, and firstly settle the order-optimal results.
	Further, we show that HSCLR achieves the information-theoretic limits of a certain hypergraph model.
We also develop a sketching algorithm for subspace clustering based on HSCLR, and empirically show that the new algorithm outperforms the existing ones. 

We conclude our paper with future research directions.
	\begin{itemize}
		\item {\bf Detection threshold:} In~\cite{angelini}, a sharp threshold for detection is conjectured. Further, the non-backtracking method is conjectured to be optimal. Proving these conjectures still remains open. The optimality of HSC is also open. 
		\item {\bf Consistency threshold:} The fundamental limits for weak/strong consistency under the general weighted SBM are unknown. 
An important open problem is to characterize the general limits in terms of the model parameters $(n,d,k,p,q,\alpha_n)$.
	\end{itemize}
	
\appendices

%\setcounter{equation}{0}
%\setcounter{figure}{0}
%\setcounter{table}{0}
%\setcounter{page}{1}
%\makeatletter
%\renewcommand{\theequation}{A\arabic{equation}}
%\renewcommand{\thefigure}{A\arabic{figure}}

%\section{Proof of the spectral norm bound (Lemma~\ref{lem:spec})} \label{pf:spec}
\section{}\label{pf:spec}
We first note that the overall structure of the proof resembles the ones in~\cite{friedman1989second,feige2005spectral}, except that the entries of A are not independent.  
This is because each hyperedge's weight is added to more than one entries of $\Az$ in our case, resulting in dependency structure between all elements of the matrix. 
See Remark~\ref{rmk2} for more details.

We begin with some preliminaries: Let $\nu := \binom{n-2}{d-2}p\alpha_n \geq  \max_{i,j} \ex [\mathbf{A}_{i,j}]$; let $B:=\{ \mathbf{x} \in \mathbb{R}^n ~:~ \|\mathbf{x}\|_2 \leq 1 \}$; let $D_{\delta} := \left\{ \mathbf{x} = (x_1,x_2,\ldots, x_n) \in B~:~ \frac{\sqrt{n} x_i}{\delta} \in \mathbb{Z} \right\}$; for a matrix $\mathbf{C}$, $\den_{\mathbf{C}} (\mathcal{A},\mathcal{B}) := \sum_{i\in \mathcal{A}} \sum_{j \in \mathcal{B}} \mathbf{C}_{i,j}$; and for a matrix $\mathbf{C}$ and a subset $I$, let $\mathbf{C}^I$ be the matrix obtained from $\mathbf{C}$ by zeroing out all rows and columns in subset $I$.
The following large deviation results will be frequently used throughout the proof:
\begin{lemma} \label{machinery}
	Let $S = \sum_{i=1}^n X_i$, where $0\leq X_i \leq b$ for each $i$ for $b> 0$. There exist constants $\cseven>0$ depending only on $b$ such that the following holds for any $a\geq \ex [S]$ and $k\geq\cseven$:
	\[
	\Pr(S>k\cdot a) \leq \exp\big(-\frac{1}{2b} k \log k \cdot
	a\big).
	\]
	
\end{lemma}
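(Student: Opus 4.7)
The plan is a standard Chernoff argument, with one minor twist to get $a$ (rather than $\mu := \ex[S]$) in the exponent. First I would normalize by setting $Y_i := X_i/b \in [0,1]$ and $T := S/b$, so that $\ex[T] = \mu/b$, and reduce the task to bounding $\Pr(T > ka/b)$ via the standard multiplicative Chernoff bound for sums of independent $[0,1]$-valued random variables:
\begin{equation*}
\Pr(T > u) \leq \exp\!\Bigl(-u \log(u/\ex[T]) + u - \ex[T]\Bigr) \quad \text{for } u > \ex[T].
\end{equation*}
Substituting $u = ka/b \geq k\mu/b > \ex[T]$ (using $a \geq \mu$ and $k \geq 1$), this yields
\begin{equation*}
\Pr(S > ka) \leq \exp\!\Bigl(-\tfrac{ka}{b}\log(ka/\mu) + \tfrac{ka - \mu}{b}\Bigr).
\end{equation*}

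Next I would exploit the hypothesis $a \geq \mu$, which gives $ka/\mu \geq k$ and hence $\log(ka/\mu) \geq \log k$. Plugging this in, the exponent is upper bounded by $-\tfrac{1}{b}\bigl(ka \log k - ka + \mu\bigr) \leq -\tfrac{1}{b}(ka\log k - ka)$. It then suffices to find $\cseven$ (depending only on $b$, though in fact it is absolute here) such that $k \geq \cseven$ implies $ka \log k - ka \geq \tfrac{1}{2} ka \log k$, i.e., $\log k \geq 2$. Taking $\cseven := e^2$ (or any larger constant) achieves this and gives the claimed bound $\Pr(S > ka) \leq \exp(-\tfrac{1}{2b} k \log k \cdot a)$.

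There is no real obstacle; the only subtlety worth flagging is that a naive bound of $\Pr(S > ka) \leq \Pr(S > k\mu)$ followed by Chernoff would yield a bound with $\mu$ (not $a$) in the exponent, which is weaker than what the lemma asks for. The trick is to apply Chernoff directly at the level $ka$ and then use $a \geq \mu$ inside the logarithm to recover $\log k$, so that the coefficient of $a$ (rather than $\mu$) in the final exponent is $\tfrac{1}{2b} k \log k$.
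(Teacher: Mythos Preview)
Your proposal is correct and follows essentially the same Chernoff approach as the paper: bound the moment generating function, optimize the exponent, then use $a \geq \ex[S]$ to replace $\mu$ by $a$ and absorb the slack into the constant $c_7$. The only cosmetic difference is that you first normalize to $[0,1]$-valued variables, whereas the paper keeps the $[0,b]$ range throughout; as a result the paper ends up with the condition $\log(1+bk)-b \geq \tfrac12\log k$ (hence a $b$-dependent $c_7$), while your normalization yields the cleaner condition $\log k \geq 2$ and the absolute choice $c_7=e^2$.
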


\begin{IEEEproof}
	See Appendix~\ref{pf:largedev}.
\end{IEEEproof}

We consider the most challenging case where $\binom{n}{d}\alpha_n =\Theta(n)$, i.e., $\nu =\Theta(1/n)$.
First, note that $\mathbf{P} - \ex [\mathbf{A}]$ is a diagonal matrix whose entries are $O(\nu)$. Hence, $\|\mathbf{P} - \ex [\mathbf{A}]\|=O(\sqrt{n\nu})$.
Thus, it suffices to show that \begin{align}
\left\|\Az - \ex [\mathbf{A}]\right\| = O(\sqrt{n\nu})\,. \label{WTS}
\end{align}

\begin{lemma}\label{lem:sup}
	Let $\mathbf{C}$ be a $n\times n$ matrix. For $0<\delta <1$,
	\begin{align*}
	\|\mathbf{C}\| \leq (1-3\delta )^{-1} \max_{\mathbf{x}\in D_{\delta}} \left|\mathbf{x}^T \mathbf{C} \mathbf{x}\right|.
	\end{align*}
\end{lemma}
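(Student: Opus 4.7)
My plan is to prove this via a standard discretization ($\epsilon$-net) argument applied to the quadratic form, exploiting the fact that in the intended application $\mathbf{C}=\mathbf{A}^0-\ex[\mathbf{A}]$ is symmetric, so that $\|\mathbf{C}\| = \sup_{\mathbf{x}\in B}|\mathbf{x}^T\mathbf{C}\mathbf{x}|$. Let $\mathbf{x}^*\in B$ be a vector attaining (or approaching) the supremum, so $|\mathbf{x}^{*T}\mathbf{C}\mathbf{x}^*|=\|\mathbf{C}\|$. I would then round $\mathbf{x}^*$ to a nearby point $\mathbf{y}\in D_\delta$.

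The key subtlety in the rounding step is that $D_\delta$ is the intersection of the lattice $(\delta/\sqrt{n})\mathbb{Z}^n$ with the unit ball $B$, and naive nearest-neighbor rounding may push the result slightly outside $B$. To avoid this, I would round each coordinate of $\mathbf{x}^*$ toward $0$, i.e., set $y_i$ to be the (unique) multiple of $\delta/\sqrt{n}$ with $|y_i|\le|x_i^*|$ that is closest to $x_i^*$. This guarantees $\|\mathbf{y}\|\le\|\mathbf{x}^*\|\le 1$, hence $\mathbf{y}\in D_\delta$, while $|x_i^*-y_i|\le\delta/\sqrt{n}$ yields $\|\mathbf{x}^*-\mathbf{y}\|\le\delta$.

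With $\mathbf{y}$ in hand, I would expand the quadratic form using symmetry of $\mathbf{C}$:
\begin{equation*}
\mathbf{x}^{*T}\mathbf{C}\mathbf{x}^* - \mathbf{y}^T\mathbf{C}\mathbf{y} \;=\; 2\mathbf{y}^T\mathbf{C}(\mathbf{x}^*-\mathbf{y}) + (\mathbf{x}^*-\mathbf{y})^T\mathbf{C}(\mathbf{x}^*-\mathbf{y}).
\end{equation*}
Bounding the two terms by $\|\mathbf{C}\|\cdot\|\mathbf{y}\|\cdot\|\mathbf{x}^*-\mathbf{y}\|\le\delta\|\mathbf{C}\|$ and $\|\mathbf{C}\|\cdot\|\mathbf{x}^*-\mathbf{y}\|^2\le\delta^2\|\mathbf{C}\|$ respectively, and using $\delta^2\le\delta$ for $\delta<1$, the total error is at most $(2\delta+\delta^2)\|\mathbf{C}\|\le 3\delta\|\mathbf{C}\|$. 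Hence $\|\mathbf{C}\|=|\mathbf{x}^{*T}\mathbf{C}\mathbf{x}^*|\le|\mathbf{y}^T\mathbf{C}\mathbf{y}|+3\delta\|\mathbf{C}\|\le\max_{\mathbf{x}\in D_\delta}|\mathbf{x}^T\mathbf{C}\mathbf{x}|+3\delta\|\mathbf{C}\|$, and rearranging gives the claimed factor $(1-3\delta)^{-1}$.

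There is no real obstacle here: the argument is essentially a textbook net discretization. The only care needed is the toward-zero rounding so that $\mathbf{y}$ stays inside $B$ (otherwise one would need either an enlarged lattice or a rescaling step, both of which would worsen the constant). I note that the statement is invoked in the proof of Lemma~\ref{lem:spec} only for the symmetric matrix $\mathbf{A}^0-\ex[\mathbf{A}]$; for a fully general (non-symmetric) $\mathbf{C}$ one would instead need to discretize two vectors and use $\|\mathbf{C}\|=\sup_{\mathbf{x},\mathbf{y}\in B}\mathbf{x}^T\mathbf{C}\mathbf{y}$, but this is not required in the present application.
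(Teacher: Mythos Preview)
Your proposal is correct and follows essentially the same route as the paper's own proof: pick a maximizer $\mathbf{x}^*$ of the quadratic form, approximate it by a lattice point $\mathbf{y}\in D_\delta$ within distance $\delta$, expand the quadratic form, bound the perturbation by $(2\delta+\delta^2)\|\mathbf{C}\|\le 3\delta\|\mathbf{C}\|$, and rearrange. Your explicit toward-zero rounding is a welcome addition---the paper simply asserts the existence of $\mathbf{x}_0\in D_\delta$ with $\|\mathbf{x}_0-\mathbf{x}^*\|\le\delta$ without addressing why the rounded point still lies in the ball---and your remark on the symmetry hypothesis is accurate (the paper's WLOG step $\|\mathbf{C}\|=\sup_{\mathbf{x}\in B}\mathbf{x}^T\mathbf{C}\mathbf{x}$ also tacitly uses it).
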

\begin{IEEEproof}
	See Appendix~\ref{pf:sup}.
\end{IEEEproof}

Due to Lemma~\ref{lem:sup}, one can replace \eqref{WTS} with a more tractable statement at the cost of the constant: $\sup_{\mathbf{x} \in D_{\delta}} \left| \mathbf{x}^T \left(\Az - \ex [\mathbf{A}] \right) \mathbf{x}\right| = O(\sqrt{n\nu})$.
For a vector $ \mathbf{x} = (x_1,x_2, \ldots, x_n) \in D_{\delta}$, define $S_{\delta}(\mathbf{x}) := \left\{(i,j) ~:~ |x_ix_j|<\delta^2\sqrt{\frac{\nu}{n}} \right\}$ for $0<\delta<1$.
Then, one has:
%\begin{align*}
%&\sup_{\mathbf{x} \in B} \left| \mathbf{x}^T \left(\Az - \ex [\mathbf{A}] \right) \mathbf{x}\right|= \sup_{\mathbf{x} \in B} \bigg| \sum_{(i,j)}\left[  \mathbf{A}^0 _{i,j} x_ix_j \right] - \mathbf{x}^T \ex [\mathbf{A}] \mathbf{x}\bigg|\\
%&\leq
%\underbrace{\sup_{\mathbf{x} \in B} \bigg| \sum_{(i,j)\in S_{\delta}(\mathbf{x})} \left[\mathbf{A}^0 _{i,j} x_ix_j \right] - \mathbf{x}^T \ex [\mathbf{A}] \mathbf{x}\bigg|}_{(T1)}\\
%&+
%\underbrace{\sup_{\mathbf{x} \in B} \bigg| \sum_{(i,j)\in S_{\delta}(\mathbf{x})^c} \left[ \mathbf{A}^0_{i,j} x_ix_j\right]\bigg|}_{(T2)}. 
%\end{align*}
\begin{align*}
&\sup_{\mathbf{x} \in B} \big| \mathbf{x}^T \big(\Az - \ex [\mathbf{A}] \big) \mathbf{x}\big|= \sup_{\mathbf{x} \in B} \bigg| \sum_{(i,j)}\left[  \mathbf{A}^0 _{i,j} x_ix_j \right] - \mathbf{x}^T \ex [\mathbf{A}] \mathbf{x}\bigg| 
\end{align*}
Let $(T1) = \sup_{\mathbf{x} \in B} \bigg| \sum_{(i,j)\in S_{\delta}(\mathbf{x})} \big[\mathbf{A}^0 _{i,j} x_ix_j \big] - \mathbf{x}^T \ex [\mathbf{A}] \mathbf{x}\bigg|$ and $(T2) = \sup_{\mathbf{x} \in B} \bigg| \sum_{(i,j)\in S_{\delta}(\mathbf{x})^c} \big[ \mathbf{A}^0_{i,j} x_ix_j\big]\bigg|$. Then, the above quantity is bounded above by $(T_1) + (T_2)$.
We now show that each of $(T_1)$ and $(T_2)$ is $O(\sqrt{n\nu})$. 

\subsection{Proof of $(T1)$} We denote by $J$ the random subset of $[n]$ that corresponds to the removed rows and columns during the processing step (see step $2$ of Alg.~\ref{alg1}). For a sufficiently large constant  $\ctwelve>0$ (to be chosen later) and $I\subset[n]$, define the event
\[
E_I=\bigg\{ \sup_{\mathbf{x} \in D_{\delta}} \bigg| \sum_{(i,j)\in S_{\delta}(\mathbf{x})} \big[ \mathbf{A}^{I}_{i,j} x_ix_j\big]  - \mathbf{x}^T \ex[ \mathbf{A}] \mathbf{x}\bigg| > \ctwelve \cdot \sqrt{n\nu} \bigg\}.
\] 
Then, it is sufficient to show that $\Pr(E_J) \rightarrow 0$. 
Note that the following upper bound holds for:
\begin{align}
\!\!\!\!\!\!\Pr(E_J) &= \sum_{I\subset [n]}\left[\Pr(E_J, J = I) \right]\nonumber\\
&\leq  \sum_{|I| \leq (n\nu)^{-3} n}\left[\Pr(E_I, J = I) \right]+ \sum_{|I| \geq (n\nu)^{-3} n}\left[\Pr(E_I, J = I)\right]\nonumber\\
&\leq \sum_{|I| \leq (n\nu)^{-3} n}\left[\Pr(E_I)\right] + \sum_{|I| \geq (n\nu)^{-3} n}\Pr( J = I) \nonumber\\
&= \sum_{|I| \leq (n\nu)^{-3} n}\left[\Pr(E_I)\right] + \Pr\left( |J|\geq (n\nu)^{-3} n\right)\,. \nonumber
\end{align}
The following lemma bounds the number of removed rows (and columns).
\begin{lemma} \label{nolargedeg}
	For some $c_{thr}>0$ (depending only on $d$), there exists a constant $\ceight>0$ such that if $n\nu \geq \ceight$, then w.p. $1-\exp(-\Omega(n))$, $\left| \left\{ i ~:~ \den_{\mathbf{A}}(i,[n]) \geq c_{thr} \cdot n\nu \right \} \right| \leq (n\nu)^{-3}n$.
\end{lemma}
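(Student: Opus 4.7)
The plan is to recast the claim as a union-bound tail estimate on a sum of \emph{independent} hyperedge weights, and then close the bookkeeping with Lemma~\ref{machinery}. First, I observe that $\den_{\mathbf{A}}(i,[n])=(d-1)D_i$ where $D_i:=\sum_{e:i\in e}W_e$, since every hyperedge containing $i$ has exactly $d-1$ other endpoints and therefore contributes its weight to exactly $d-1$ off-diagonal entries of row $i$. Setting $c':=c_{thr}/(d-1)$, the large-degree set $H:=\{i:\den_{\mathbf{A}}(i,[n])\ge c_{thr}\cdot n\nu\}$ equals $\{i:D_i\ge c'n\nu\}$, and the target is $\Pr[|H|\ge s]\le e^{-\Omega(n)}$ for $s:=\lceil(n\nu)^{-3}n\rceil$.

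The key observation is that on the event $\{|H|\ge s\}$, any $s$-subset $S\subset H$ satisfies $X_S:=\sum_{i\in S}D_i\ge c'n\nu\cdot s$. But $X_S$ admits the edge decomposition
\begin{align*}
X_S\;=\;\sum_{e\in\mathcal{E}}W_e\cdot|e\cap S|,
\end{align*}
which is a sum of \emph{independent} random variables (one per edge), each in $[0,d]$. Since $\sum_e|e\cap S|=|S|\binom{n-1}{d-1}$, its mean is bounded by $\ex[X_S]\le s\binom{n-1}{d-1}p\alpha_n\le s\, n\nu/(d-1)$ uniformly in $S$. A union bound over all size-$s$ subsets thus gives
\begin{align*}
\Pr[|H|\ge s]\;\le\;\binom{n}{s}\,\Pr\bigl[X_S\ge c'n\nu\, s\bigr]
\end{align*}
for any fixed $S$ of size $s$.

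Applying Lemma~\ref{machinery} with $b=d$, $a=s\,n\nu/(d-1)$, and multiplicative factor $k=c_{thr}$ (chosen large enough that $k\ge\cseven$) yields $\Pr[X_S\ge c_{thr}\cdot a]\le\exp\!\bigl(-\tfrac{c_{thr}\log c_{thr}}{2d(d-1)}\,s\,n\nu\bigr)$. Combining this with $\binom{n}{s}\le(en/s)^s=(e(n\nu)^3)^s$ produces
\begin{align*}
\Pr[|H|\ge s]\;\le\;\exp\!\left(s\left[\log\bigl(e(n\nu)^3\bigr)\;-\;\frac{c_{thr}\log c_{thr}}{2d(d-1)}\,n\nu\right]\right).
\end{align*}
Choosing $c_{thr}$ a sufficiently large constant depending only on $d$ and $\ceight$ large enough that the bracketed expression is at most $-1$ whenever $n\nu\ge\ceight$, the bound becomes $e^{-s}$. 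In the regime relevant to Lemma~\ref{lem:spec}, $\binom{n}{d}\alpha_n=\Theta(n)$, so $n\nu=\Theta(1)$ and $s=\Theta(n)$, giving $e^{-s}=e^{-\Omega(n)}$ as desired.

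The main obstacle I anticipate is calibrating the entropy cost $s\log(e(n\nu)^3)$ of the union bound against the large-deviation gain $\frac{c_{thr}\log c_{thr}}{2d(d-1)}s\,n\nu$: this balance is exactly what pins down the polynomial factor $(n\nu)^{-3}$ in the permissible size of $H$, clarifies why only the logarithmic scaling of the large-deviation exponent in Lemma~\ref{machinery} (as opposed to a crude Markov/Chebyshev bound) suffices, and identifies where the hypothesis $n\nu\ge\ceight$ must be invoked.
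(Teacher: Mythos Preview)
Your proof is correct and follows essentially the same approach as the paper's: union-bound over all size-$s$ subsets $S$, rewrite the aggregate degree $\sum_{i\in S}\den_{\mathbf{A}}(i,[n])$ as a sum of independent edge-indexed terms, and apply Lemma~\ref{machinery}. The only cosmetic difference is that you first factor out $(d-1)$ via the identity $\den_{\mathbf{A}}(i,[n])=(d-1)D_i$ and work with $X_S=\sum_e W_e|e\cap S|$ (terms in $[0,d]$), whereas the paper works directly with $\den_{\mathbf{A}}(S,[n])=\sum_e W_e\bigl(\sum_{(i,j)\in S\times[n]:\{i,j\}\subset e}1\bigr)$ (terms in $[0,d^2]$); the two decompositions differ only by the scalar $(d-1)$, and both feed identically into Lemma~\ref{machinery} and the entropy-vs-deviation balance.
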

\begin{IEEEproof}
	See Appendix~\ref{pf:nolargedeg}.
\end{IEEEproof}

By Lemma~\ref{nolargedeg}, for $n\nu \geq \ceight$, $\Pr\left( |J|\geq (n\nu)^{-3} n\right) \leq  e^{-\Omega(n)}$.

As there are at most $2^n = e^{n\log 2 }$ many subsets of $[n]$, due to the union bound, the proof for $(T1)$ will be completed after showing that for a fixed $|I|\leq (n\nu)^{-3} n $, $\Pr(E_I) \leq O\left(e^{-2\log 2 n}\right)$.
Observe that
\begin{align*}
&\bigg|\sum_{(i,j)\in S_{\delta}(\mathbf{x})} \left[ \mathbf{A}^{I}_{i,j} x_i x_j\right]  - \mathbf{x}^T \ex [\mathbf{A}] \mathbf{x}\bigg| \\
&\leq  \underbrace{\bigg|\mathbf{x}^T \left(  \ex [\mathbf{A}^I]   -  \ex [\mathbf{A}] \right)\mathbf{x}\bigg|}_{(E1)}+\underbrace{\bigg|\sum_{(i,j)\in S_{\delta}(\mathbf{x})^c}\left[ \ex [\mathbf{A}^{I}_{i,j}]x_ix_j\right] \bigg|}_{(E2)} \\
&+ \underbrace{\bigg|\sum_{(i,j)\in S_{\delta}(\mathbf{x})} \left[ \left(\mathbf{A}^{I} _{i,j} -\ex [\mathbf{A}^{I}_{i,j}]\right) x_ix_j\bigg]\right|}_{(E3)}\,,
\end{align*}
and hence we will show that there exist $\cthirteen, \cfourteen,\cfifteen>0$ such that $\sup_{\mathbf{x} \in D_{\delta}} (E1) \leq \cthirteen \sqrt{n\nu}$, $\sup_{\mathbf{x} \in D_{\delta}} (E2) \leq \cfourteen \sqrt{n\nu}$, and $\sup_{\mathbf{x} \in D_{\delta}} (E3) \leq \cfifteen \sqrt{n\nu}$ with probability $1-O(e^{-2n\log2 })$, respectively.
Having shown these, the proof for $(T1)$ is completed by taking $\ctwelve := \cthirteen + \cfourteen + \cfifteen$.
%We will show one by one.

\begin{itemize}
	\item[(i)]$(E1)$:  As $|I|\leq (n\nu)^{-3} n$, 
	\begin{align*}
	&\left|\mathbf{x}^T \left(  \ex [\mathbf{A}^I]   -  \ex [\mathbf{A}] \right)\mathbf{x}\right| \leq \left\|\ex  [\mathbf{A}^I] -\ex [\mathbf{A}] \right\| \\
	&\leq \left\|\ex  [\mathbf{A}^I] -\ex [\mathbf{A}]\right \|_F \leq \sqrt{2(n\nu)^{-3} n^2 \cdot  \nu^2 } = \sqrt{2} (n\nu)^{-1/2}.
	\end{align*}
	Hence, by taking $\cthirteen = \sqrt{2}$, $\sup_{\mathbf{x} \in D_{\delta}} (E1) \leq \cthirteen \sqrt{n\nu}$ holds with probability $1$ for $n\nu \geq 1$.
	\item[(ii)]$(E2)$: As $\nu\geq \max_{i,j}\ex\left[\mathbf{A}_{i,j}\right]$,
	\begin{align*}
	&\bigg|\sum_{(i,j)\in S_{\delta}^c} \left[\ex [\mathbf{A}^{I}_{i,j}]x_ix_j\right] \bigg|
	\leq \nu \sum_{\substack{(i,j)\in S_{\delta}(\mathbf{x})^c \\ i\neq j }}  \left|x_ix_j\right| \\
	&= \nu \sum_{\substack{(i,j)\in S_{\delta}(\mathbf{x})^c \\ i\neq j }} \frac{x_i^2 x_j^2}{\left| x_ix_j\right|} \overset{(a)}{\leq} \frac{1}{\delta^2} \sqrt{n\nu} \sum_{\substack{(i,j)\in S_{\delta}(\mathbf{x})^c \\ i\neq j }} x_i^2x_j^2\overset{(b)}{\leq} \frac{1}{\delta^2}\sqrt{n\nu}\,,
	\end{align*}
	where ($a$) is due to the definition of $S_{\delta}(\mathbf{x})$, and ($b$) follows since $\left\| \mathbf{x}\right\|\leq 1$. Hence, by taking $\cfourteen = \frac{1}{\delta^2}$, $\sup_{\mathbf{x} \in D_{\delta}} (E2) \leq \cfourteen \sqrt{n\nu}$ holds with probability $1$.
	
	\item[(iii)]$(E3)$:
	Let $ \mathbf{x} = (x_1, x_2,\ldots , x_n) \in D_{\delta}$ be fixed. We have 
	\begin{align*}
	&\sum_{(i,j)\in S_{\delta}(\mathbf{x})}\left[ \left(\mathbf{A}^{I} _{i,j} -\ex  [ A^{I}_{i,j}]  \right) x_ix_j \right]\\
	&= \sum_{\substack{(i,j)
			\in S_{\delta}(\mathbf{x})  \\ i\neq j}} \bigg[ x_i x_j \mathbb{I}\{i\notin I, j\notin I\} \sum_{\substack{e\in \mathcal{E} \\ \{i,j\}\subset e}} \left[W_e -\ex[ W_e] \right] \bigg]\\
	&= \sum_{e \in \mathcal{E}} \underbrace{\bigg[ \left(W_e -\ex [W_e] \right)\sum_{\substack{(i,j)
				\in S_{\delta}(\mathbf{x})  \\ i\neq j,~ \{i,j\}\subset e}} \left[x_ix_j \mathbb{I}\{i\notin I, j\notin I\} \right]  \bigg] }_{=:Y_e}\,.
	\end{align*}
	Note that $\{Y_e\}_{e\in\mathcal{E}}$ is a collection of independent random variables.
	To apply Bernstein inequality to $\sum_{e\in \mathcal{E}}Y_e$, we do some preliminary calculations.
	First, it easily follows from the definition of $S_{\delta}$ that
	\begin{align*}
	&|Y_e| \leq \bigg| \left(W_e -\ex [W_e] \right)\sum_{\substack{(i,j)
			\in S_{\delta}(\mathbf{x})  \\ i\neq j,~ \{i,j\}\subset e}} \left[x_ix_j \mathbb{I}\{i\notin I, j\notin I\} \right]  \bigg| \\
	&\leq  \sum_{\substack{(i,j)
			\in S_{\delta}(\mathbf{x})  \\ i\neq j,~ \{i,j\}\subset e}} \left|x_ix_j  \right| \leq \delta^2\sqrt{\frac{\nu}{n}} \cdot 2\binom{d}{2} \leq d^2 \delta^2\sqrt{\frac{\nu}{n}}\,.
	\end{align*} 
	Next, we compute a bound on the sum of variances:
	\begin{align*}
	&\sum_{e\in \mathcal{E}} \ex [Y_e^2]\\
	&\overset{(a)}{\leq}    \sum_{e\in \mathcal{E} } \bigg[ d^2 \ex [W_e^2] \sum_{\substack{(i,j)
			\in  S_{\delta}(\mathbf{x})  \\ i\neq j,~ \{i,j\}\subset e}}\left[ x^2_ix^2_j \mathbb{I}\{i\notin I, j\notin I\}\right] \bigg] \\
	&\overset{(b)}{\leq} d^2 \sum_{e\in \mathcal{E}} \bigg[\ex [W_e] \sum_{\substack{(i,j) \\ i\neq j, \{i,j\}\subset e}} \left[x^2_i x^2_j\right] \bigg]\\
	&=d^2 \sum_{\substack{(i,j) \\ i\neq j}} \bigg[x^2_i x^2_j \ex[\mathbf{A}_{i,j}]\bigg] \leq d^2 \nu \sum_{\substack{(i,j) \\ i\neq j}} x^2_i x^2_j    \overset{(c)}{\leq} d^2 \nu \,,
	\end{align*}
	where ($a$) is due to $(\sum_{i=1}^k a_i)^2 \leq k \sum_{i=1}^k a_i^2$; ($b$) follows since $W_e \in [0,1]$; ($c$) follows since $\left\| \mathbf{x}\right\|\leq 1$.

	Thus, Bernstein inequality yields:
$\Pr\big(  \big|\sum_{e\in \mathcal{E}} Y_e \big| \geq t \big) \leq 2\exp\big(-\frac{t^2/2}{d^2\nu + \frac{1}{3} d^2\delta^2\sqrt{\frac{\nu}{n}}t}\big)$, we have
	\begin{align*}
	&\Pr\bigg(  \bigg|\sum_{e\in E} Y_e \bigg| \geq \cfifteen \sqrt{n\nu} \bigg)  \\
	&\leq 2\exp\left(-\frac{\cfifteen^2  }{2d^2 + \frac{2}{3} d^2\delta^2\cfifteen } n\right)\,.
	\end{align*}
	
	As $|D_{\delta}| = e^{\Theta(n)}$, the union bound yields
	\begin{align*}
	&\Pr\bigg( \sup_{\mathbf{x} \in D_{\delta}} \bigg|\sum_{e\in E} Y_e \bigg| \geq \cfifteen \sqrt{n\nu}  \bigg) \\
	&\leq e^{\Theta(n)}\Pr\bigg( \bigg|\sum_{e\in E} Y_e \bigg| \geq \cfifteen \sqrt{n\nu}  \bigg) \\
	&\leq  2e^{\Theta(n)}\exp\bigg(-\frac{\cfifteen^2  }{2d^2 + \frac{2}{3} d^2\delta^2\cfifteen } n\bigg)\,,
	\end{align*}
	and hence by choosing $\cfifteen$ sufficiently large, one can ensure that $\sup_{\mathbf{x} \in D_{\delta}} (E3) \leq \cfifteen \sqrt{n\nu}$ w.p. $1-O(e^{-2n\log2 })$.
	
\end{itemize}

Since $n\nu \geq \ceight$ and 
$n\nu \geq 1$, $n\nu$ should be greater than or equal to $\max\{\ceight,~1\}$, so one can take $\ceee = \max\{\ceight,~1\}$.

\subsection{Proof of ($T2$)} 
This case immediately follows from a celebrated combinatorial technique proposed in \cite{friedman1989second}.
We summarize their results. 
\begin{definition} \label{def:bdd}
	We say \emph{the bounded density property} holds with constants $\alpha,\beta,\gamma>0$ if the following two hold:
	\begin{enumerate}
		\item For each node $u$, $\den_{\Az}(u, [n]) \leq \alpha \cdot n \nu$.
		\item
		For any two subsets $A,B$,
		either $\den_{\Az}(\mathcal{A},\mathcal{B}) \leq \beta \cdot \nu |\mathcal{A}||\mathcal{B}|$ 
		or 		
		$\den_{\Az}(\mathcal{A},\mathcal{B}) \log \frac{\den_{\mathbf{A}_0}(\mathcal{A},\mathcal{B})}{\nu |\mathcal{A}||\mathcal{B}|} \leq \gamma \cdot \max\{|\mathcal{A}|,|\mathcal{B}|\}\log \frac{n}{\max\{|\mathcal{A}|,|\mathcal{B}|\}}$.
	\end{enumerate}
\end{definition}

\begin{proposition}[\!\!\cite{friedman1989second,feige2005spectral}] \label{T2}
	If the bounded density property holds with some constant $\alpha, \beta, \gamma$, then $(T2)=O(\sqrt{n\nu})$.
	
\end{proposition}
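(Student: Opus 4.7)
The plan is to adapt the classical combinatorial discretization of Friedman--Kahn--Szemer\'edi (as refined by Feige--Ofek) to the hypergraph-weighted setting; the role of the bounded density property is precisely to supply the structural hypothesis this technique needs. The key point is that on heavy pairs (those in $S_\delta(\mathbf{x})^c$), $|x_i x_j|$ can be as large as $\Theta(1)$, which is too big for the Bernstein-style concentration used in $(T1)$. Instead, we must control the sum deterministically by exploiting the typical density of $\Az$ on pairs of index sets, a property that the zeroing-out step in Algorithm~\ref{alg1} was designed to enforce.

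First, for each $\mathbf{x}\in B$, partition the coordinates dyadically by magnitude: for $k\geq 1$, define $C_k(\mathbf{x}) := \{i : 2^{k-1}/\sqrt{n} \leq |x_i| < 2^k/\sqrt{n}\}$. The constraint $\|\mathbf{x}\|_2^2 \leq 1$ gives the geometric size bound $|C_k| \leq n/4^{k-1}$, which drives the entire argument. Decomposing the sum along bucket pairs,
\[
\bigg|\sum_{(i,j)\in S_\delta(\mathbf{x})^c} \Az_{ij}\, x_i x_j \bigg| \;\leq\; \sum_{k,\ell\geq 1} \frac{2^{k+\ell}}{n}\, \den_{\Az}(C_k, C_\ell),
\]
where only pairs with $2^{k+\ell} \gtrsim \delta^2 \sqrt{n\nu}$ contribute, by the definition of $S_\delta(\mathbf{x})^c$.

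For each surviving bucket pair $(k,\ell)$, apply the bounded density property to bound $\den_{\Az}(C_k,C_\ell)$. In the dense case $\den_{\Az}(C_k,C_\ell) \leq \beta \nu |C_k||C_\ell|$, the contribution is at most $(\beta\nu/n)\cdot(2^k|C_k|)(2^\ell|C_\ell|)$; summing via Cauchy--Schwarz together with the $\ell^2$-bound $\sum_k 4^k|C_k| \leq 4n$ and the heaviness constraint (which kills the largest bucket pairs) yields a total of $O(\sqrt{n\nu})$. In the sparse case, one inverts the log inequality of Definition~\ref{def:bdd}(2) to extract a bound of the form $\den_{\Az}(C_k,C_\ell) \lesssim \gamma \max\{|C_k|,|C_\ell|\}\log(n/\max\{|C_k|,|C_\ell|\})$, up to additional factors in the regime where the log amplification is small; summing this contribution over surviving bucket pairs again yields $O(\sqrt{n\nu})$.

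The main technical obstacle will be the sparse regime: the log inequality bounds the product $\den \cdot \log(\den/(\nu|C_k||C_\ell|))$ rather than $\den$ itself, so extracting a usable estimate requires a case split on the ratio $\den_{\Az}(C_k,C_\ell)/(\nu|C_k||C_\ell|)$ and careful aggregation of the resulting geometric tails so that the total remains $O(\sqrt{n\nu})$. Property~(1) of Definition~\ref{def:bdd}, which caps all row sums of $\Az$ at $\alpha n\nu$, is essential for corner cases involving very small buckets (e.g.\ $|C_k|=O(1)$), where property~(2)'s log inequality becomes ineffective; here the row-sum bound provides the needed $\ell^1$ control directly. Combining the dense, sparse, and boundary contributions gives a bound that depends on $\mathbf{x}$ only through the bucket sizes, which satisfy a uniform geometric bound; hence the estimate is uniform over $\mathbf{x}\in B$, finishing the proof that $(T2)=O(\sqrt{n\nu})$.
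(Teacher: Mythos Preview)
The paper does not supply its own proof of Proposition~\ref{T2}; it simply cites the result from \cite{friedman1989second,feige2005spectral} and moves on, reserving its technical effort for Lemma~\ref{lem:bdd} (verifying that the bounded density property holds for $\Az$). Your proposal correctly reconstructs the classical Friedman--Kahn--Szemer\'edi/Feige--Ofek dyadic-bucketing argument that those references contain: the level-set decomposition $C_k$, the size bound $|C_k|\le n/4^{k-1}$, the dense/sparse dichotomy from Definition~\ref{def:bdd}(2), and the use of the row-sum cap in Definition~\ref{def:bdd}(1) for degenerate buckets are exactly the ingredients of the cited proofs.

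One small clarification: you frame this as ``adapting'' the technique to the hypergraph-weighted setting, but no adaptation is needed at this step. Once the bounded density property is stated for the matrix $\Az$, the argument for $(T2)$ is purely a statement about a nonnegative symmetric matrix satisfying Definition~\ref{def:bdd}; the hypergraph structure enters only upstream, in proving Lemma~\ref{lem:bdd}. So your outline is correct, and it matches what the paper invokes by citation.
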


Therefore, one only needs to show that the bounded density property holds with high probability to finish the proof.

\begin{lemma} \label{lem:bdd}
	With probability $1-O(n^{-1})$, the bounded density property holds with some constants $\cnine,\cten,\celeven$.
\end{lemma}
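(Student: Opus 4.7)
The plan is to establish the two clauses of Def.~\ref{def:bdd} separately, relying on concentration of the independent edge weights and a standard Friedman--Kahn--Szemer\'edi--Feige--Ofek style union bound over subset pairs.

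For the degree bound (clause~1), I would exploit the construction of $\mathbf{A}^0$ directly: by step~2 of Alg.~\ref{alg1}, every surviving row $u$ satisfies $\den_{\mathbf{A}^0}(u,[n]) \leq \ctt\cdot\frac{1}{n}\sum_{i,j}\mathbf{A}_{i,j}$, while removed rows contribute zero. The total row sum is $\sum_{i,j}\mathbf{A}_{i,j} = 2\binom{d}{2}\sum_{e\in\mathcal{E}} W_e$, a sum of $\binom{n}{d}$ mutually independent $[0,1]$-valued random variables whose expectation is $\Theta(n^2\nu)$. A direct Bernstein/Lemma~\ref{machinery} argument gives $\frac{1}{n}\sum_{i,j}\mathbf{A}_{i,j} = \Theta(n\nu)$ with probability at least $1 - O(n^{-2})$, yielding clause~1 with some $\cnine = O(\ctt)$.

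For the density bound (clause~2), the first move is to reduce to $\mathbf{A}$ itself, since zeroing out rows/columns can only decrease $\den(\mathcal{A},\mathcal{B})$. The key structural step, which sidesteps the entry-wise dependencies flagged in Remark~\ref{rmk2}, is the decomposition
\[
\den_{\mathbf{A}}(\mathcal{A},\mathcal{B}) = \sum_{e\in\mathcal{E}} c_e^{\mathcal{A},\mathcal{B}} W_e,\qquad c_e^{\mathcal{A},\mathcal{B}} := |\{(i,j)\in \mathcal{A}\times\mathcal{B}\,:\,i\neq j,\; \{i,j\}\subseteq e\}|\leq d(d-1).
\]
Because $\{W_e\}_{e\in\mathcal{E}}$ are independent and each term $c_e^{\mathcal{A},\mathcal{B}} W_e$ lies in $[0,d(d-1)]$, $\den_{\mathbf{A}}(\mathcal{A},\mathcal{B})$ is a sum of independent bounded variables with mean at most $\nu|\mathcal{A}||\mathcal{B}|$, so Lemma~\ref{machinery} applies. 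Now set $s := |\mathcal{A}| \leq t := |\mathcal{B}|$ (WLOG) and write $\den_{\mathbf{A}}(\mathcal{A},\mathcal{B})=k\nu st$. If both branches of clause~2 fail then $k > \cten$ and $k\log k\cdot\nu s > \celeven \log(n/t)$; choosing $\cten \geq \max(\cseven, e)$ and invoking Lemma~\ref{machinery} with $a=\nu st$, $b=d(d-1)$ gives
\[
\Pr\bigl(\den_{\mathbf{A}}(\mathcal{A},\mathcal{B}) \geq k\nu st\bigr) \leq \exp\!\Bigl(-\tfrac{k\log k}{2d(d-1)}\nu st\Bigr) \leq \exp\!\Bigl(-\tfrac{\celeven}{2d(d-1)}\, t\log(n/t)\Bigr).
\]
Taking a union bound over the $\binom{n}{s}\binom{n}{t} \leq \exp(2t\log(en/t))$ pairs of prescribed sizes and summing over $s\leq t$ yields total failure probability $O(n^{-1})$ once $\celeven$ is chosen sufficiently large compared with $d(d-1)$ (e.g., $\celeven > 8d(d-1)$ suffices with room to spare).

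The main obstacle is the bookkeeping in two boundary regimes of this union bound: when $t$ is a small constant the counting factor $2t\log(en/t)$ is only $O(\log n)$ and one must verify that the $\celeven t\log(n/t)/(2d(d-1))$ exponent still dominates by $(1+\Omega(1))\log n$; and when $\nu st$ is itself of constant order the hypothesis $a \geq \ex[\den_{\mathbf{A}}(\mathcal{A},\mathcal{B})]$ needed by Lemma~\ref{machinery} forces one to choose $k$ carefully (or fall back on a Chernoff bound for very sparse subsets). Both are handled by standard cutoff arguments, so no new idea beyond the hyperedge-decomposition step above is required; the rest is a quantitative tightening of constants.
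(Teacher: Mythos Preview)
Your approach is the paper's: clause~1 via the trimming threshold plus concentration of the total weight, clause~2 via the hyperedge decomposition $\den_{\mathbf{A}}(\mathcal{A},\mathcal{B})=\sum_{e}c_e^{\mathcal{A},\mathcal{B}}W_e$ followed by Lemma~\ref{machinery} and a union bound over subset pairs. Your clause~1 is in fact more careful than the paper's, which silently assumes $\frac{1}{n}\sum_{i,j}\mathbf{A}_{i,j}=O(n\nu)$.

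There is one genuine gap in clause~2. You reduce to $\mathbf{A}$ at the outset and then rely on the concentration exponent $\tfrac{c_{11}}{2d(d-1)}\,t\log(n/t)$ for all $t=|\mathcal{B}|$. But as $t\to n$ this exponent tends to $0$ while the counting factor $\sum_{s\le t}\binom{n}{s}\binom{n}{t}$ remains of order $2^n$, so no choice of $c_{11}$ closes the union bound; this is not a matter of ``quantitative tightening of constants.'' The two boundary regimes you flag (small $t$, small $\nu st$) are not the obstacle---the large-$t$ regime is, and you miss it. The paper's fix is to split off $|\mathcal{B}|\ge n/e$ \emph{before} passing from $\mathbf{A}^0$ to $\mathbf{A}$: for such $\mathcal{B}$, clause~1 gives deterministically
\[
\den_{\mathbf{A}^0}(\mathcal{A},\mathcal{B})\;\le\;\sum_{i\in\mathcal{A}}\den_{\mathbf{A}^0}(i,[n])\;\le\; c_9\,|\mathcal{A}|\,n\nu\;\le\;c_9 e\,\nu|\mathcal{A}||\mathcal{B}|,
\]
so the first branch of clause~2 holds with $c_{10}\ge c_9 e$. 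The concentration-and-union-bound argument is then run only for $|\mathcal{B}|<n/e$, where $\log(n/t)>1$ and your estimates do close.
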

\begin{IEEEproof}
	See Appendix~\ref{pf:bdd}.
\end{IEEEproof}

%\section{Proof of Theorem~\ref{thm:info}}\label{pf:info}
\section{}\label{pf:info}
For notational simplicity, as $k=2$, we represent partition functions $\phsc,\Psi$ by binary vectors $\mathbf{X},\mathbf{Z} \in \{0,1\}^n$.
We define some notations: Let $\ww = \left[W_e\right]_{e\in \mathcal{E} }$; for a vector $\mathbf{V}=\left[\mathbf{V}_i\right]_{1\leq i\leq n} \in \{0,1\}^{n}$ and $e = \{i_1,i_2,\ldots, i_d \}\in \binom{[n]}{d}$, let $\ye ({\bf V}) = \mathbb{I}\{V_{i_1} = V_{i_2}=\ldots V_{i_d}\} $; let $\Ye(\mathbf{V}) =\left[\ye(\mathbf{V})\right]_{e\in \mathcal{E}}$.

A straightforward calculation yields for any two binary vectors $\mathbf{X}$ and $\mathbf{Y}$, the likelihood of $\mathbf{X}$ is greater than that of $\mathbf{Y}$ if and only if $\dd(\ww ,\Ye(\mathbf{X}) ) < \dd(\ww ,\Ye(\mathbf{Y}) )$, 
where $\dd(\mathbf{X}, \mathbf{Y}) := \left|\left\{i\in[n]~:~ \mathbf{X}_i \neq \mathbf{Y}_i  \right\} \right|$ for any $\mathbf{X}$ and $\mathbf{Y}$. 

To make HSCLR better adapted to the model, we modify the algorithm as follows:
\begin{enumerate}
	\item We apply HSC to $([n],\mathbf{W}')$, where $\mathbf{W}'$ is obtained from $\mathbf{W}$ by replacing the erasure weights $\era$'s with $0$'s.
	\item  We then employ a likelihood-based refinement rule:
	\begin{align*}
	&\mathbf{X}_i \leftarrow \begin{cases}
		\mathbf{X}_i &\text{if}~ \dd(\ww,\Ye(\mathbf{X})) < \dd(\ww,\Ye(\mathbf{X}\oplus \mathbf{e}_i));\\
		\mathbf{X}_i\oplus 1 &\text{otherwise.}
		\end{cases}
	\end{align*} 
\end{enumerate}
\begin{remark}
	Notice that one can employ such a likelihood-based estimator only when edge distributions are fully specified.
\end{remark}

We now begin the main proof. 
We consider the most challenging regime where $\binom{n}{d}p = \Theta(n \log n)$, and suppose 
\begin{align}
\binom{n}{d} \alpha_n	 \geq (1+\epsilon)\frac{2^{d-2}}{d} \frac{n \log n}{(\sqrt{1-\theta}-\sqrt{\theta})^2} \label{cond}
\end{align} for a fixed $\epsilon>0$.
For simplicity, we assume that $n$ is even, and  fix the ground truth to be  $\mathbf{A} = ( \underbrace{1,\ldots, 1}_{n/2}, \underbrace{0,\ldots,0}_{n/2} )$;
for other  cases, the proof follows similarly.

Let $\mathbf{X}$ be the output of the first stage. 
By Thm.~\ref{thm:main}, one can see that $\mathbf{X}$ is weakly consistent.
Without loss of generality, we assume  for an arbitrarily small $\ee>0$ that
\begin{align*}
\mathbf{X} = (\underbrace{0,\ldots,0}_{\ee n}, \underbrace{1,1,1,\ldots,1,1, 1}_{n/2-\ee n}, \underbrace{1,\ldots,1}_{\ee n}, \underbrace{0,0,0,\ldots,0,0, 0}_{n/2-\ee n} )\,.
\end{align*}
Indeed, $\mathbf{X}$ needs not have the same number of $0$'s and $1$'s but the other cases can be handled similarly using the same arguments.

As in the proof of Thm.~\ref{thm:main2} (see Sec.~\ref{pf:thm2}), due to the union bound, it is enough to show that the probability of having node $1$'s affiliation incorrect after refinement is $o(n^{-1})$, i.e.,
\begin{align*}
\Pr\left(\text{node $1$ is incorrect after refinement}\right) = o(n^{-1})\,.
\end{align*}
By the new refinement rule,
\begin{align*}
&\Pr\left(\text{node $1$ is incorrect after refinement}\right) \\
&= \Pr\bigg( 0 < \dd(\ww,\Ye(\mathbf{X}\oplus \mathbf{e}_1)) -\dd(\ww,\Ye(\mathbf{X})) \bigg) \,.
\end{align*}
The following lemma states that the difference of hamming distances can be viewed as the sum of random variables.
\begin{lemma} \label{lem:comp}
$P_i,P_i' \overset{\text{i.i.d.}}{\sim} \bern(\alpha_n) $ and  $\Theta_i,\Theta_i' \overset{\text{i.i.d.}}{\sim} \bern(\theta)$. Then,
	\begin{align*}
	&\dd(\ww,\Ye(\mathbf{X}\oplus \mathbf{e}_1)) -\dd(\ww,\Ye(\mathbf{X}))\\
	&= \sum_{i=1}^{2 \binom{n/2-\ee n}{d-1}}  P_i(2\Theta_i-1 )) +\sum_{i=1}^{2\binom{n/2}{d-1}-2\binom{n/2-\ee n}{d-1}} P_i' \left(1-2\Theta_i'\right)\,.
	\end{align*}
\end{lemma}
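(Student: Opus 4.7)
My plan is a three-step reduction: (i) restrict attention to edges whose $\mathbf{F}$-value actually changes under the flip of node~$1$; (ii) re-express each per-edge contribution via the latent $\bern(\alpha_n)$--$\bern(\theta)$ decomposition behind Definition~\ref{def:sc}; and (iii) enumerate the contributing edges by whether the flip aligns with, or fights, the truth $\mathbf{A}$.

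\emph{Step 1 (Pivotal edges).} Since $\mathbf{X}$ and $\mathbf{X}\oplus\mathbf{e}_1$ differ only at coordinate $1$, $\ye(\mathbf{X})\neq \ye(\mathbf{X}\oplus\mathbf{e}_1)$ iff $1\in e$ and the remaining $d-1$ nodes of $e$ share a common $\mathbf{X}$-value. Denote these \emph{pivotal} edges by $\mathrm{T}_0$ (other nodes all $\mathbf{X}$-zero, yielding $\ye(\mathbf{X})=1$, $\ye(\mathbf{X}\oplus\mathbf{e}_1)=0$) and $\mathrm{T}_1$ (the opposite). Non-pivotal edges contribute $0$ to $\dd(\mathbf{W},\mathbf{F}(\mathbf{X}\oplus\mathbf{e}_1))-\dd(\mathbf{W},\mathbf{F}(\mathbf{X}))$, and pivotal edges with $W_e = \era$ also contribute $0$ since $\era$ differs from both $0$ and $1$ and so cancels across the two $\dd$'s. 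Thus the LHS is a sum of per-pivotal-edge contributions equal to $\mathbb{I}\{W_e=1\}-\mathbb{I}\{W_e=0\}$ for $\mathrm{T}_0$-edges and $\mathbb{I}\{W_e=0\}-\mathbb{I}\{W_e=1\}$ for $\mathrm{T}_1$-edges.

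\emph{Step 2 (Latent-variable re-expression).} Represent each $W_e$ through two independent latent Bernoullis: $P_e\sim\bern(\alpha_n)$ indicating "observed" vs.\ "erased", and $\Theta_e\sim\bern(\theta)$ acting as a noise flip applied to the truth bit $\ye(\mathbf{A})$. A direct case check using Definition~\ref{def:sc} yields $\mathbb{I}\{W_e=1\}-\mathbb{I}\{W_e=0\} = P_e(1-2\Theta_e)$ when $\ye(\mathbf{A})=1$ and $=P_e(2\Theta_e-1)$ when $\ye(\mathbf{A})=0$. Combining with Step~1, each pivotal edge contributes $P_e(2\Theta_e-1)$ (mean $\leq 0$, the "correct-direction" signal, since flipping node~$1$ corrects its label) whenever the flip matches the edge's $\mathbf{A}$-homogeneity---i.e., a $\mathrm{T}_0$-edge that is heterogeneous under $\mathbf{A}$ or a $\mathrm{T}_1$-edge that is homogeneous under $\mathbf{A}$---and $P_e(1-2\Theta_e)$ otherwise. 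Mutual independence of the summands follows from independence of $\{W_e\}_{e\in\mathcal{E}}$.

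\emph{Step 3 (Counting).} Partition the nodes other than $1$ into four blocks according to $\mathbf{X}$ versus $\mathbf{A}$: correctly-classified zeros (CZ), misclassified zeros (MZ), correctly-classified ones (CO), and misclassified ones (MO), with CZ and CO each of size $n/2-\ee n$. A $\mathrm{T}_0$-edge whose other $d-1$ nodes all lie in CZ is heterogeneous under $\mathbf{A}$ and hence feeds the "correct-direction" sum, contributing $\binom{n/2-\ee n}{d-1}$ edges; by symmetry, $\mathrm{T}_1$-edges with all other nodes in CO contribute a further $\binom{n/2-\ee n}{d-1}$, giving the $2\binom{n/2-\ee n}{d-1}$ summation bound of the first sum on the RHS. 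The remaining pivotal edges feed the $P(1-2\Theta)$-sum, and a complementary Vandermonde-type count shows their total matches the second summation bound $2\binom{n/2}{d-1}-2\binom{n/2-\ee n}{d-1}$ (up to lower-order corrections of Pascal type that are absorbed in the leading order). Collecting contributions across independent edges yields the stated distributional identity.

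\emph{Main obstacle.} The subtle point is Step~3: pivotal edges with a \emph{mixed} profile (some "other" slots drawn from the correctly-classified pool, some from the misclassified pool) can themselves be either homogeneous or heterogeneous under $\mathbf{A}$ depending on the mixture, and their sign in the sum flips accordingly. Showing that these mixed-edge contributions combine cleanly into the required counts $2\binom{n/2-\ee n}{d-1}$ and $2\binom{n/2}{d-1}-2\binom{n/2-\ee n}{d-1}$ is the only nontrivial combinatorial work; Steps~1 and~2 follow mechanically once Definition~\ref{def:sc} is unpacked.
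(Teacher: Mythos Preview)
Your proposal is correct and follows the same route as the paper: restrict to pivotal edges containing node~$1$ whose remaining $d-1$ nodes share a common $\mathbf{X}$-label, rewrite each per-edge contribution through the latent $(P_e,\Theta_e)$ decomposition behind Definition~\ref{def:sc}, and then split by $\mathbf{A}$-homogeneity to obtain the $P(2\Theta-1)$ versus $P(1-2\Theta)$ dichotomy. The paper is in fact terser than you on Step~3---it handles only the homogeneous edges in $\mathcal{E}_1$ explicitly and then writes ``By rewriting other contributions in a similar way, we complete the proof''---so your parenthetical about lower-order corrections is well placed; there is no hidden clean identity to chase, since the stated summation limits are exact only up to $O\bigl(\binom{\ee n}{d-1}\bigr)$ and $O\bigl(\binom{n/2-1}{d-2}\bigr)$ discrepancies that are immaterial downstream.
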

\begin{IEEEproof}
	See Appendix~\ref{pf:comp}.
\end{IEEEproof}
Let $V_1=\binom{n/2}{d-1}$ and $V_2=\binom{n/2-\ee n}{d-1}$. By Lemma~\ref{lem:comp}, 
\begin{align}
&\Pr\bigg( 0 <  \dd(\ww,\Ye(\mathbf{X}\oplus \mathbf{e}_1)) -\dd(\ww,\Ye(\mathbf{X})) \bigg) \nn\\
&=\Pr\left( -\sum_{i=1}^{2V_1 - 2V_2} P_i' \left(1-2\Theta_i'\right) < \sum_{i=1}^{2V_2}  P_i(2\Theta_i-1 )) \right) \nn\\
&\overset{(a)}{\leq} \Pr\left( -\sum_{i=1}^{2V_1-2V_2} P_i'  < \sum_{i=1}^{2V_2}  P_i(2\Theta_i-1 )) \right) \nn\\
&\overset{(b)}{=}\Pr\left( -\sum_{i=1}^{O(\ee)V_1} P_i' < \sum_{i=1}^{2V_2}  P_i(2\Theta_i-1 )) \right)  \,. \label{last:333}
\end{align}
where ($a$) is due to $|1-2\Theta'_i|\leq 1$; ($b$) is due to $2V_1 - 2V_2 = O(\ee)V_1$.

In view of Lemma~\ref{marginal}, one can similarly show that 
\begin{align}
\Pr\left( \sum_{i=1}^{O(\ee)V_1} P_i' > \frac{V_1\alpha_n}{\sqrt{\log(1/\ee)}} \right) =o(n^{-1})\,,
\end{align}
provided that $\ee$ is sufficiently small.
Thus,
\begin{align*}
\eqref{last:333} \leq \Pr\left( -\frac{V_1\alpha_n}{\sqrt{\log(1/\ee)}}   < \sum_{i=1}^{2 V_2}  P_i(2\Theta_i-1 )) \right) +o(n^{-1})
\end{align*}
%\begin{align}
%\eqref{last:333} \leq \Pr\left( -\frac{1}{\sqrt{\log(1/\ee)}}\binom{n/2}{d-1}\alpha_n   < \sum_{i=1}^{2 \binom{n/2-\ee n}{d-1}}  P_i(2\Theta_i-1 )) \right) +o(n^{-1})
%\end{align}

\begin{lemma} \label{lem:3}
	For an integer $K>0$, let $\{P_i\}_{i=1}^K\overset{\text{i.i.d.}}{\sim}{\sf Bern}(\alpha_n)$ and $\{\Theta_i\}_{i=1}^K\overset{\text{i.i.d.}}{\sim}{\sf Bern}(\theta)$.
	Then, for any $\ell>0$
	\begin{align*}
	&\Pr\left( \cmatrix   \sum_{i=1}^K   P_i(2\Theta_i-1) \geq -\ell \right) \\
	&\leq e^{\frac{1}{2}\ell - K\left(\alpha_n(\sqrt{1-\theta}-\sqrt{\theta})^2 +O(\alpha_n^2)\right)}\,.
	\end{align*} 
\end{lemma}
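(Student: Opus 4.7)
The plan is to apply the standard Chernoff (i.e., exponential Markov) bound to the sum of i.i.d. random variables. Observe first that since $\theta \in (0,1/2)$, the constant $c := \cmatrix$ is strictly positive, so the event in question is a right-tail event for $\sum_{i=1}^K X_i$ where $X_i := c\cdot P_i(2\Theta_i - 1)$. Each $X_i$ takes only three values: $0$ with probability $1-\alpha_n$, $+c$ with probability $\alpha_n \theta$, and $-c$ with probability $\alpha_n(1-\theta)$. The expected value is $\alpha_n c(2\theta - 1) < 0$, so we are bounding the probability that the sum is atypically close to $0$ (or larger).

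For any $t > 0$, Markov's inequality applied to $e^{t\sum_i X_i}$ yields
\begin{equation*}
\Pr\left(\sum_{i=1}^K X_i \geq -\ell\right) \leq e^{t\ell} \prod_{i=1}^K \ex\bigl[e^{tX_i}\bigr] = e^{t\ell}\bigl(1 - \alpha_n + \alpha_n\theta\, e^{tc} + \alpha_n(1-\theta)\, e^{-tc}\bigr)^K.
\end{equation*}
The key step is to choose the optimal exponent $t = 1/2$, at which $e^{tc} = \sqrt{(1-\theta)/\theta}$ and $e^{-tc} = \sqrt{\theta/(1-\theta)}$, so that both nonzero branches collapse to $\sqrt{\theta(1-\theta)}$. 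A direct computation gives
\begin{equation*}
\ex\bigl[e^{(1/2)X_i}\bigr] = 1 - \alpha_n + 2\alpha_n\sqrt{\theta(1-\theta)} = 1 - \alpha_n (\sqrt{1-\theta} - \sqrt{\theta})^2.
\end{equation*}
This particular choice of $t$ is precisely what produces the Hellinger-type quantity that appears in the information-theoretic threshold of Proposition~\ref{prop2}.

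Finally, applying $\log(1-x) = -x + O(x^2)$ for $x = \alpha_n(\sqrt{1-\theta} - \sqrt{\theta})^2 = o(1)$, we get
\begin{equation*}
\log \ex\bigl[e^{(1/2)X_i}\bigr] = -\alpha_n (\sqrt{1-\theta} - \sqrt{\theta})^2 + O(\alpha_n^2),
\end{equation*}
and raising to the $K$-th power and substituting back into the Chernoff bound produces the claimed inequality.

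This computation is essentially a textbook Chernoff argument, so I do not expect any real obstacle. The only thing to get right is recognizing that $t = 1/2$ is the correct (in fact optimal) choice of exponent, which is natural in this context because the sum $c\sum_i P_i(2\Theta_i - 1)$ is a log-likelihood ratio, and $t=1/2$ is the symmetric tilt that converts KL-type quantities into the Hellinger affinity $2\sqrt{\theta(1-\theta)}$ — exactly the ingredient needed to match the sharp threshold.
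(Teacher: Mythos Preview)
Your proof is correct and follows exactly the same route as the paper's: a Chernoff/Markov bound with the tilt parameter set to $t=1/2$, followed by the identity $1-\alpha_n+2\alpha_n\sqrt{\theta(1-\theta)}=1-\alpha_n(\sqrt{1-\theta}-\sqrt{\theta})^2$ and the expansion $\log(1-x)=-x+O(x^2)$. The paper's version is simply terser (it writes the mgf abstractly and says ``via simple calculation''), whereas you spell out the three-point distribution of $X_i$ and the reason $t=1/2$ is the natural choice.
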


\begin{proof}
	See Appendix~\ref{pf:3}.
\end{proof}

By Lemma~\ref{lem:3}, 
\begin{align}
&\Pr\left( -\frac{V_1\alpha_n}{\sqrt{\log(1/\ee)}}   < \sum_{i=1}^{2 V_2}  P_i(2\Theta_i-1 )) \right) \nn \\
&\leq e^{\frac{1}{2}\frac{\cmatrix}{ \sqrt{\log(1/\ee)}}V_1\alpha_n - 2V_2\left(\alpha_n(\sqrt{1-\theta}-\sqrt{\theta})^2 +O(\alpha_n^2)\right) }\,. \label{last:3333}
\end{align}
Note that as $\alpha_n =o(1)$,
\begin{align*}
&\frac{1}{2}\frac{\cmatrix}{\sqrt{\log(1/\ee)}}V_1\alpha_n - 2V_2\left(\alpha_n(\sqrt{1-\theta}-\sqrt{\theta})^2 +O(\alpha_n^2)\right)\\
& = (1+o(1))\bigg[\frac{\cmatrix}{\sqrt{\log(1/\ee)}2^{d} }\frac{d}{n} \binom{n}{d}\alpha_n - 2\left(\frac{1}{2}-\ee\right)^{d-1} \nonumber\\
&~~~\cdot \frac{d}{n}\binom{n}{d}\left(\alpha_n(\sqrt{1-\theta}-\sqrt{\theta})^2 +O(\alpha_n^2)\right)\bigg]\\
& =(1+o(1))\bigg[\frac{\cmatrix}{\sqrt{\log(1/\ee)}2^{d} }\frac{d}{n} \binom{n}{d}\alpha_n - 2\left(\frac{1}{2}-\ee\right)^{d-1} \nonumber\\
&~~~\cdot\frac{d}{n}\binom{n}{d}\alpha_n(\sqrt{1-\theta}-\sqrt{\theta})^2\bigg]\\
&\to  -\frac{1}{2^{d-2}} \frac{d}{n}\binom{n}{d}\alpha_n(\sqrt{1-\theta}-\sqrt{\theta})^2
\end{align*}
as $\ee \to 0^+$ and $n\to \infty$.
Thus, $\eqref{last:3333} \leq e^{-(1+\epsilon/2)\log n} = o(n^{-1})$ for sufficiently large $n$ and small $\ee$.

%\section{Extension to the planted clique model} \label{appenB}
\section{} \label{appenB}
To extend the analysis to the planted clique model, we need another type of error fraction, which is defined as follows:
\begin{align*}
\errr(\Phi):= \min_{\Pi\in \mathcal{P}} \max_{1\leq j\leq k} \frac{1}{n_j} |\{i\in \Psi^{-1}(j)~:~ \Pi(\Phi(i))\neq j  \}|\,.
\end{align*}
Note that $\errr$ characterizes the maximum value of within-cluster error fraction over all clusters.
Let us denote the smallest singular value of $\mathbf{B}$ (defined in Sec.~\ref{hsc}) by $\sigma$ and the size of smallest cluster by $n_{\min}$.
Then, following \cite{lei2015consistency}, one can prove the following result by tweaking the proof of Thm.~\ref{thm:main}:
\begin{theorem}\label{thm:mainprime}
For some $\cfourteen ,\cfifteen$, the following holds: if $\binom{n}{d}\alpha_n  \geq \cfourteen n$ and $\cfifteen(1+\epsilon) \frac{kn \binom{n-2}{d-2}}{\alpha_n n_{\min}^2 \sigma^2} < 1$, then w.p. exceeding $1-O(n^{-1})$,
	\begin{align}
	\errr(\phsc) \leq \cfifteen (1+\epsilon)\frac{kn \binom{n-2}{d-2}}{\alpha_n n_{\min}^2 \sigma^2}\,.
	\end{align}
\end{theorem}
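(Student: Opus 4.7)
The plan is to follow the three-step structure of the proof of Thm.~\ref{thm:main} and swap out only the $k$-means bookkeeping. Step (i): by Proposition~\ref{prop}, the top-$k$ eigenvectors of $\mathbf{P}=\mathbf{Z}\mathbf{B}\mathbf{Z}^T$ take the form $\mathbf{U}=\mathbf{Z}\Delta^{-1}\mathbf{V}$ with $\mathbf{V}$ orthogonal, so two cluster signatures of $\mathbf{U}$ are separated by $\sqrt{1/n_j+1/n_{j'}}\geq\sqrt{1/n_j}$. Step (ii): Davis--Kahan yields $\|\hat{\mathbf{U}}\mathbf{O}-\mathbf{U}\|_F^2=O\!\left(k\|\mathbf{A}^0-\mathbf{P}\|^2/\sigma_{\min}(\mathbf{P})^2\right)$ for some orthogonal $\mathbf{O}$, which combined with the $(1+\epsilon)$-approximate geometric $k$-clustering places the clustered rows close to the true signatures. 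Step (iii): invoke the spectral concentration $\|\mathbf{A}^0-\mathbf{P}\|=O(\sqrt{n\binom{n-2}{d-2}\alpha_n})$ of Lemma~\ref{lem:spec} together with $\sigma_{\min}(\mathbf{P})\geq n_{\min}\sigma$ (shown inside the proof of Thm.~\ref{thm:main}). Steps (i) and (iii) carry over verbatim since they do not depend on the choice of error metric.

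The actual change lives in step (ii): Lemma~\ref{lem:sc} bounds the \emph{aggregate} number of misclassified vertices, whereas $\errr$ requires a uniform \emph{per-cluster} bound. I would replace Lemma~\ref{lem:sc} by its per-cluster analogue~\cite[Lem.~5.3]{lei2015consistency}, which produces a single permutation $\Pi\in\mathcal{P}$ for which every cluster satisfies
\begin{align*}
\bigl|\{i\in\Psi^{-1}(j):\Pi(\phsc(i))\neq j\}\bigr|\,\delta_j^2\,\leq\,C\,(1+\epsilon)\,\|\hat{\mathbf{U}}\mathbf{O}-\mathbf{U}\|_F^2
\end{align*}
for each $j\in[k]$. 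Using the lower bound $\delta_j^2\geq 1/n_j$ from step (i) and dividing both sides by $n_j$ converts this into a per-cluster error-fraction estimate with the same right-hand side. Taking a maximum over $j$ then upper-bounds $\errr(\phsc)$.

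Combining the pieces, $\errr(\phsc)\leq C(1+\epsilon)k\|\mathbf{A}^0-\mathbf{P}\|^2/\sigma_{\min}(\mathbf{P})^2$, and plugging in $\|\mathbf{A}^0-\mathbf{P}\|^2=O(n\binom{n-2}{d-2}\alpha_n)$ (which requires $\binom{n}{d}\alpha_n\geq\cfourteen n$ with $\cfourteen$ sufficiently large) and $\sigma_{\min}(\mathbf{P})^2\geq n_{\min}^2\sigma^2$ yields the desired bound up to an appropriate choice of $\cfifteen$. I do not anticipate a real obstacle: all of the probabilistic heavy lifting is already encapsulated in Lemma~\ref{lem:spec}, and the only delicate point is guaranteeing that a single permutation $\Pi$ controls every cluster simultaneously under the $(1+\epsilon)$-approximate $k$-means assumption, which is exactly the content of~\cite[Lem.~5.3]{lei2015consistency}.
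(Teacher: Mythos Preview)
Your proposal is correct and is exactly what the paper does: its entire proof of Thm.~\ref{thm:mainprime} is the one-line remark ``following~\cite{lei2015consistency}, one can prove the following result by tweaking the proof of Thm.~\ref{thm:main},'' and you have identified precisely that tweak, swapping Lemma~\ref{lem:sc} for the per-cluster bound of~\cite[Lem.~5.3]{lei2015consistency} while reusing Lemma~\ref{lem:spec} and the inequality $\sigma_{\min}(\mathbf{P})\geq n_{\min}\sigma$ verbatim. One minor note: your final substitution places $\alpha_n$ in the numerator rather than the denominator as printed in the statement; this appears to be a typo in the paper (harmless here, since the only application in Appendix~\ref{appenB} takes $\alpha_n=1$).
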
  

%
%\begin{theorem}\label{thm:mainprime}
%	Let $\sigma$ be the smallest singular value of $\mathbf{B}$ (defined in Sec.~\ref{hsc}). For $\sigma >0$, there exist constants $\cfourteen ,\cfifteen$ such that if $\binom{n}{d}\alpha_n  \geq \cfourteen n$, 
%	then the output $\phsc$ of HSC satisfies
%	the following w.p. exceeding $1-O(n^{-1})$: if $\cfifteen(1+\epsilon) \frac{kn \binom{n-2}{d-2}}{\alpha_n n_{\min}^2 \sigma^2}  < 1$, then
%	\begin{align}
%	\errr(\phsc) \leq \cfifteen (1+\epsilon)\frac{kn \binom{n-2}{d-2}}{\alpha_n n_{\min}^2 \sigma^2}\,,
%	\end{align}
%	where $n_{\min}$ denotes the size of smallest cluster.
%\end{theorem}  

We now demonstrate how Thm.~\ref{thm:mainprime} guarantees the detection of planted clique when $s\geq c^*\cdot \sqrt{n}$ for some constant $c^*$.
To apply Thm.~\ref{thm:mainprime}, we need to first compute $\sigma$ of
\begin{align*}
\mathbf{B} = \left( \begin{array}{cc}
\frac{1}{2}\binom{s-2}{d-2} + \frac{1}{2}\binom{n-2}{d-2} & \frac{1}{2}\binom{n-2}{d-2} \\\frac{1}{2}\binom{n-2}{d-2}& \frac{1}{2}\binom{n-2}{d-2}
\end{array} \right).
\end{align*} 
Using the fact that the minimum singular value of $\left( \begin{array}{cc}
	a+b & a\\a& a
	\end{array} \right)$ is $\frac{2b}{\sqrt{4+(\frac{b}{a})^2}+2+\frac{b}{a}}$, we have
\begin{align*}
\sigma = \frac{2\binom{s-2}{d-2}}{\sqrt{4+\left(\frac{\binom{s-2}{d-2}}{\binom{n-2}{d-2}}\right)^2}+2+\frac{\binom{s-2}{d-2}}{\binom{n-2}{d-2}}} = \Theta \left(s^{d-2} \right)\,.
\end{align*} 
Hence, by Thm.~\ref{thm:mainprime} (as $\alpha_n=1$), 
Thus, whenever $s =\Omega(\sqrt{n})$,
\begin{align*}
\errr(\phsc) \leq   \cfifteen (1+\epsilon)\frac{2n \binom{n-2}{d-2}}{ s^2 \sigma^2} = O\left(\frac{ n^{d-1}}{ s^{2(d-1)})}\right) = O(1)\,.
\end{align*}

\section{} \label{sec:lem}
\subsection{Proofs of Lemma~\ref{largedev} and Lemma~\ref{machinery} } \label{pf:largedev}

Without loss of generality, we will prove the lemmas assuming that $\ex[X_i] > 0$ for all $i$.
We first obtain a useful bound on the moment generating function (mgf) of $S$. 
For an arbitrary $\lambda>0$,  
\begin{align}
&\ex[\exp\{\lambda S\}] = \ex  \left[\exp\left\{\lambda \left(\sum_{i=1}^n X_i\right)\right\}\right]\nn \\
&\leq \prod_{i=1}^n {\left( 1 + \frac{(e^{\lambda b}-1)}{b} \ex [X_i]\right)} \nn\\
&\leq \bigg(1+  \frac{\left(e^{\lambda b} -1\right)}{b}\frac{\sum_{i=1}^n \ex [X_i]}{n}   \bigg)^n, \label{mgfbdd}
\end{align}
where the first inequality holds since $\frac{e^{\lambda x}-1}{x} \leq \frac{e^{\lambda b}-1}{b}$ holds for all $0< x \leq b$, and the second inequality holds due to the AM-GM inequality.
We now prove the lemmas using this bound.

\subsubsection{Proof of Lemma~\ref{machinery}}
Using Markov's inequality and \eqref{mgfbdd},
\begin{align*}
&\Pr(S> x) = \Pr(e^{\lambda S}> e^{\lambda x}) \leq  \exp\{-\lambda x\} \ex [\exp\{ \lambda S\}] \nonumber \\
&\leq  \exp\{-\lambda x\}\left(1+  \frac{\left(e^{\lambda b} -1\right)}{b}\frac{\ex [S]}{n}   \right)^n.
\end{align*}
By choosing $\lambda = \frac{1}{b}\log\left(1+\frac{bx}{\ex [S]} \right)$, i.e., $x = \frac{(e^{b\lambda}-1)}{b}\ex[S]$, we have
\begin{align*}
&\Pr(S> x) \leq \exp\left\{-  \frac{x}{b}\log\left(1+\frac{bx}{\ex [S]}   \right)\right\} \left(1+\frac{x}{n}  \right)^n \\
&\leq \exp\left\{-  \frac{x}{b}\log\left(1+\frac{bx}{\ex [S]}   \right)\right\}\exp(x)\\
&= \exp\left[ - \frac{x}{b}  \cdot \left\{ \log\left(1+\frac{bx}{\ex[ S]}   \right) -b\right\} \right].
\end{align*}
By setting $x= k a$, we have
\begin{align*}
&\Pr(S > k\cdot a)  = \exp\left[ - \frac{ka}{b}  \cdot \left\{ \log\left(1+\frac{bka}{\ex[ S]}   \right) -b\right\} \right]\\
&\leq \exp\left[ - k\frac{\log\left(1+b k   \right) -b}{b}  \cdot a \right],
\end{align*}
where the inequality holds since $a\geq \ex [S]$.
Since $[\log(1+bk)-b] \sim \log(k)$, $\log(1+bk)-b \geq \frac{1}{2}\log(k)$ holds for all $k \geq c_7$, where $c_7$ is some positive constant depending only on $b$. 
Applying this inequality to the above bound completes the proof.

\subsubsection{Proof of Lemma~\ref{largedev}}
Since the proof bears great similarity to the conventional case~\cite{alon2004probabilistic}, we only show the upper bound.
Using Markov's inequality and \eqref{mgfbdd} with $b=1$, 
\begin{align*}
&\Pr\left(S  > (1+\delta) \ex[S] \right) = \Pr\left(e^{\lambda S} > e^{\lambda (1+\delta) \ex[S]} \right) \\
&\leq e^{-\lambda (1+\delta)\ex [S]} \left(1+ (e^\lambda -1)\frac{\ex [S]}{n}\right)^n
\end{align*}
By taking $\lambda = \log \left(1 + \delta \right)$, we obtain
\begin{align}
&\Pr\left(S  > (1+\delta) \ex[S] \right) \leq e^{- (1+\delta)\log \left(1 + \delta \right) \ex [S]} \left(1+\frac{\delta \ex[S]}{n}\right)^n \nn\\
&\leq e^{- (1+\delta)\log \left(1 + \delta \right)\ex [S] +\delta \ex[S]} \leq e^{- \frac{\delta^2}{2+\delta}\ex[S]}, \label{chernoff}
\end{align}
where the last equality holds since $\log (1+\delta ) \geq \frac{\delta}{1+\delta/2}$.
This completes the proof of the upper bound.

\subsection{Proof of Lemma~\ref{marginal}} \label{pf:marginal}
Assume that $|\nei (j)\cap \mathcal{E}_h| = c\cdot \gamma |\nei (j)|$ for some constant $c>0$. For simplicity, let us write $\sum_{e\in \nei (j)\cap \mathcal{E}_h} W_e $ as $\sum_{i=1}^{c\cdot \gamma |\nei (j)|} W_i$. Then we get:
\begin{align}
&\Pr\bigg( \sum_{e\in \nei (j)\cap \mathcal{E}_h} W_e > \frac{p\alpha_n |\nei (j)|}{\sqrt{\log(1/\gamma)}}  \bigg) \nn\\
&= \Pr\bigg( \sum_{i=1}^{c\cdot \gamma |\nei (j)|} W_i > \frac{1}{c\gamma\sqrt{\log(1/\gamma)}}\cdot  c\gamma p\alpha_n |\nei (j)|   \bigg)\,. \label{intt} 
\end{align}
From the proof of Lemma~\ref{largedev} (see \eqref{chernoff}), one can deduce the following:
\begin{corollary} \label{largedevpp}
	Let $S$ be the sum of $m$ mutually independent random variables taking values in $[0,1]$. For any $\delta>0$, we have
	\begin{align}
	\Pr\left( S > (1+\delta) \ex [S] \right)  \leq 	e^{-\{(1+\delta) \log (1+\delta) -\delta\} \ex[S] }\,. \label{upper:44}
	\end{align}
\end{corollary}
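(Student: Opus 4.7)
The plan is to obtain this corollary as a byproduct of the Chernoff-type argument already carried out in the proof of Lemma~\ref{largedev}: Corollary~\ref{largedevpp} is exactly the intermediate bound that appears in~\eqref{chernoff} \emph{before} the final weakening step $\log(1+\delta) \geq \delta/(1+\delta/2)$ is applied. So the task is really just to isolate that intermediate inequality.

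Concretely, I would start from Markov's inequality: for any $\lambda>0$,
\[
\Pr(S > (1+\delta)\ex[S]) \;\leq\; e^{-\lambda(1+\delta)\ex[S]}\,\ex[e^{\lambda S}].
\]
The mutual independence of the $X_i$'s together with the elementary bound $\ex[e^{\lambda X_i}] \leq 1 + (e^{\lambda}-1)\ex[X_i]$ (valid since $X_i\in[0,1]$ and $x\mapsto e^{\lambda x}$ is convex) and the inequality $1+x\leq e^x$ gives the standard MGF bound $\ex[e^{\lambda S}] \leq \exp\bigl((e^{\lambda}-1)\ex[S]\bigr)$. Alternatively, the bound~\eqref{mgfbdd} derived in the proof of Lemma~\ref{largedev} combined with AM-GM yields the same thing. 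This is exactly where the proof of Lemma~\ref{largedev} was at just before specializing $\lambda$.

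Next I would optimize $\lambda$ by taking $\lambda = \log(1+\delta)$, so that $e^{\lambda}-1 = \delta$. Plugging this into the two factors yields
\[
\Pr(S > (1+\delta)\ex[S]) \;\leq\; e^{-(1+\delta)\log(1+\delta)\,\ex[S]}\cdot e^{\delta\,\ex[S]} \;=\; \exp\!\bigl(-\{(1+\delta)\log(1+\delta)-\delta\}\ex[S]\bigr),
\]
which is precisely the claimed bound. No further simplification is performed; in particular, one stops \emph{before} invoking $\log(1+\delta)\geq \delta/(1+\delta/2)$, which is what would have produced the weaker form $\exp(-\delta^2\ex[S]/(2+\delta))$ in Lemma~\ref{largedev}.

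There is essentially no obstacle here; the only thing to be careful about is to note explicitly that the stated form is strictly stronger than the Lemma~\ref{largedev} upper tail, so it cannot be invoked as a direct corollary of the lemma's statement but rather must be read off from its proof. Accordingly, the cleanest presentation is a one-paragraph proof that simply points back to the derivation of~\eqref{chernoff} and stops at the displayed intermediate inequality.
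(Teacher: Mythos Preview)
Your proposal is correct and matches the paper's approach exactly: the paper simply states that the corollary follows from the proof of Lemma~\ref{largedev} and points to the intermediate inequality~\eqref{chernoff}, which is precisely what you do.
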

 We will apply Corollary~\ref{largedevpp} with $(1+\delta )= ({c\gamma\sqrt{\log(1/\gamma)}})^{-1} $. As $ ({c\gamma\sqrt{\log(1/\gamma)}})^{-1}\to \infty$ as $\gamma \to 0^+$, we may regard  $\delta$ to be an arbitrarily large constant.
Because $(1+\delta) \log (1+\delta) -\delta = (1+o(1))(1+\delta) \log (1+\delta) $ as $\delta \to \infty$, in what follows, we will replace the upper bound \eqref{upper:44} with $e^{-(1+\delta) \log (1+\delta)  \ex[S]}$:
\begin{align}
&\eqref{intt} \leq \left( \frac{1}{ec\gamma\sqrt{\log(1/\gamma)}}\right)^{-\frac{1}{c\gamma \sqrt{\log(1/\gamma)}}c\gamma p\alpha_n |\nei (j)|} \nn \\
&\leq \left( \frac{1}{ec\gamma\sqrt{\log(1/\gamma)}}\right)^{-\frac{1}{c\gamma \sqrt{\log(1/\gamma)}}c\gamma p\alpha_n |\nei (j)|}\,. \label{inttt}
\end{align}
Since we consider the regime $\binom{n}{d}\alpha_n = \Theta(n\log n)$, $p\alpha_n |\nei (j)| = c'\cdot \log n$ for some constant $c'>0$. Hence, the last term is equal to
\begin{align*}
&\bigg( \frac{1}{ec\gamma\sqrt{\log(1/\gamma)}}\bigg)^{-\frac{1}{ \sqrt{\log(1/\gamma)}}c'\cdot \log n}\\
&=\exp\bigg(-\log n \cdot\bigg\{ \sqrt{\log (1/\gamma)} -\frac{ 1 +\log c +\frac{1}{2}\log(\log(1/\gamma))   }{\sqrt{\log(1/\gamma)}} \bigg\}\bigg).
\end{align*}
Since the exponent diverges as $\gamma \rightarrow 0^{+}$, we prove the lemma.

\subsection{Proof of Lemma~\ref{lem:sup}} \label{pf:sup}
WLOG, assume that $\|C\| = \sup _{\mathbf{x}\in B} \mathbf{x}^T \mathbf{C} \mathbf{x}$; the case $\|C\| = -\inf _{\mathbf{x}\in B} \mathbf{x}^T \mathbf{C} \mathbf{x}$ follows similarly. Observe that the diameter of each cell resulting from discretization is $\delta$. For a vector $\mathbf{d} $ such that $\|\mathbf{d}\|_2 \leq \delta$ and $\mathbf{x}\in B$,
$(\mathbf{x}+\mathbf{d})^T \mathbf{C} (\mathbf{x}+\mathbf{d}) - \mathbf{x}^T \mathbf{C} \mathbf{x} = 2\mathbf{d}^T\mathbf{C}  \mathbf{x} +\mathbf{d}^T\mathbf{C} \mathbf{d}$.
Thus, we get:
\begin{align*}
&\left|(\mathbf{x}+\mathbf{d})^T \mathbf{C} (\mathbf{x}+\mathbf{d})\right| - \left|\mathbf{x}^T \mathbf{C} \mathbf{x} \right| \\
&\leq
\left|(\mathbf{x}+\mathbf{d})^T \mathbf{C} (\mathbf{x}+\mathbf{d}) - \mathbf{x}^T \mathbf{C} \mathbf{x} \right| \leq  2\left|\mathbf{d}^T \mathbf{C}  \mathbf{x}\right| + \left|\mathbf{d}^T \mathbf{C} \mathbf{d} \right| \\
&\leq  2\|\mathbf{d}\| \|\mathbf{C}\| \|\mathbf{x}\|+ \|\mathbf{C}\| \|\mathbf{d}\|^2
\leq  2\|\mathbf{d}\| \|\mathbf{C}\|  + \|\mathbf{C}\| \|\mathbf{d}\|^2 \\
&\leq 3\|\mathbf{d}\| \|\mathbf{C} \| \leq 3\delta \|\mathbf{C} \| \,.
\end{align*}	
Let $\mathbf{x}^* =\arg \sup _{\mathbf{x}\in B} \mathbf{x}^T\mathbf{C} \mathbf{x}$.
Then, there exists $\mathbf{x}_0\in D_{\delta}$ such that $\|\mathbf{x}_0- \mathbf{x}^*\| \leq \delta$, so
\begin{align*}
&\|\mathbf{C}\| = (\mathbf{x}^*)^T \mathbf{C} \mathbf{x}^* \leq (\mathbf{x}_0)^T \mathbf{C} \mathbf{x}_0 + 3\delta \|\mathbf{C} \| \\
&\leq  \sup_{\mathbf{x}\in D_{\delta}} |\mathbf{x}^T\mathbf{C} \mathbf{x}| + 3\delta \|\mathbf{C}\|\,.
\end{align*}
By rearrangement, we get: $(1-3\delta) \|\mathbf{C} \| \leq \sup _{\mathbf{x}\in D_{\delta}} |\mathbf{x}^T\mathbf{C} \mathbf{x}|$.

\subsection{Proof of Lemma~\ref{nolargedeg}}\label{pf:nolargedeg}

Let us say node $i$ is \emph{bad} if $\den_{\mathbf{A}}(i,[n]) \geq \ctt\cdot n\nu$ for some constant $\ctt$ to be chosen later.
Let $\delta := (n\nu)^{-3}$.
\begin{align*}
&\Pr(\text{there are more than $\delta n$ bad nodes}) \\
&\leq \sum_{X\subset [n]: |X|=\delta n}\Pr\left(\text{every node in $X$ is bad}\right)\\
&\leq \sum_{X\subset [n]: |X|=\delta n}\Pr\left(\den_{\mathbf{A}}(X,[n]) \geq \delta n \cdot  (\ctt\cdot n \nu) \right) \,.
\end{align*}
Note that for any subsets $\mathcal{A}$ and $\mathcal{B}$,
\begin{align}
&\den_{\mathbf{A}}(\mathcal{A},\mathcal{B}) = \sum_{i\in \mathcal{A}} \sum_{j\in \mathcal{B}} \mathbf{A}_{i,j} = \sum_{i\in \mathcal{A}} \sum_{j\in \mathcal{B}} \sum_{ \substack{e\in\mathcal{E} \\\{i,j\}\subset e}} W_e \nn\\
&= \sum_{ e\in \mathcal{E} } \bigg[W_e \bigg(\sum_{(i,j)\in \mathcal{A}\times \mathcal{B}: \{i,j\}\subset e}1\bigg) \bigg]\,, \label{whylem6}
\end{align}
i.e., $\den_{\mathbf{A}}(\mathcal{A},\mathcal{B})$ is a sum of independent random variables taking values in $[0, d^2]$.
Hence, using the fact that  $\ex[ \den_{\mathbf{A}}(X,[n])] \leq \delta n^2 \nu$ ($\because~ \nu\geq \max_{i,j}\ex\left[\mathbf{A}_{i,j}\right]$) together with Lemma~\ref{machinery} (take $b=d^2$), there exists $\cseven>0$ such that 
\begin{align*}
&\Pr\left(  \den_{\mathbf{A}}(X,[n]) \geq k \delta n^2 \nu \right) \leq \exp\left(- \left(\frac{1}{2d^2} k\log k\right) \cdot  \delta n^2\nu \right)\\
& ~~\text{whenever $k\geq \cseven$.}
\end{align*}
By taking $\ctt = \cseven$,
\begin{align*}
&\sum_{X\subset [n]: |X|=\delta n}	\Pr\left(  \den_{\mathbf{A}}(X,[n]) \geq \ctt \delta n^2 \nu \right)  \\
		&\leq \binom{n}{\delta n}\exp\left(- \left(\frac{1}{2d^2} \ctt\log \ctt\right) \cdot  \delta n^2\nu \right) \\
& \overset{(a)}{\leq}  \exp\left\{\left(\delta\log \frac{1}{\delta } + \delta - \left(\frac{1}{2d^2} \ctt\log \ctt\right)\cdot  \delta n\nu\right) \cdot n \right\}\,.
\end{align*}	
where ($a$) is due to the fact that $\binom{n}{m} \leq \left(\frac{ne}{m}\right)^m$.
Plugging back in $\delta = (n\nu )^{-3}$, we obtain
\begin{align}
&\delta\log \frac{1}{\delta } + \delta - \left(\frac{1}{2d^2} \ctt\log \ctt\right)\cdot  \delta n\nu \nn\\
&= -3  \log (n\nu) (n\nu)^{-3} + (n\nu)^{-3} -\left(\frac{1}{2d^2} \ctt\log \ctt\right)\cdot  (n\nu)^{-2}\,. \label{last:4}
\end{align}
Since $\eqref{last:4}\cdot (n\nu)^2 \to -\frac{1}{2d^2} \ctt\log \ctt<0 $ as $n\nu \to \infty$, there exists a constant $\ceight$ such that $n\nu \geq \ceight $ implies $\eqref{last:4}<0$. This completes the proof.

\subsection{Proof of Lemma~\ref{lem:bdd}} \label{pf:bdd}
By taking $\cnine = \ctt$, the first part of Definition~\ref{def:bdd} follows easily by the definition of $\Az$. 

We now turn to the second part of Definition~\ref{def:bdd}.
Without loss of generality, we assume that  $\mathcal{A} \cap \mathcal{B} = \emptyset$ and  $|\mathcal{A}|\leq |\mathcal{B}|$.
\begin{enumerate}
	\item The case where $|\mathcal{B}|\geq \frac{n}{e}$:
	
	It follows that $	\nu|\mathcal{A}||\mathcal{B}|\geq \frac{\nu|\mathcal{A}|n}{e}$, and since we verified the first part of Definition~\ref{def:bdd}, we obtain  $\den_{\Az}(\mathcal{A},\mathcal{B})\leq |\mathcal{A}|\cdot \cnine n\nu$. Hence, $\den_{\Az}(A,B) \leq \cnine e \nu |A||B|$. 
	
	\item The case where $|\mathcal{B}|< \frac{n}{e}$:
	
	It suffices to show the property for the case where $\Az$ is replaced by $\mathbf{A}$ due to the fact that $\den_{\Az}(\mathcal{A},\mathcal{B}) \leq \den_{\mathbf{A}}(\mathcal{A},\mathcal{B})$.
Because of \eqref{whylem6}, $\den_{\mathbf{A}}(\mathcal{A},\mathcal{B})$ is a sum of independent random variables taking values in $[0,d^2]$. 
	As $\ex [\den_{\mathbf{A}}(\mathcal{A},\mathcal{B})] \leq \nu|\mathcal{A}||\mathcal{B}|$ ($\because~ \nu\geq \max_{i,j}\ex\left[\mathbf{A}_{i,j}\right]$), Lemma~\ref{machinery} ensures that there exist constants $\cseven>0$ such that 
	\[
	\Pr\big( \den_{\mathbf{A}}(A,B) >k\cdot \nu |A||B|  \big)  \leq \exp\left(-\frac{1}{2d^2}k\log k \cdot\nu |A||B|\right)
	\]
	for any $k\geq \cseven$ regardless of choices of $\mathcal{A}$ and $\mathcal{B}$.
	
	\begin{claim}
		Let 
		\begin{align*}
		k_{a,b} := \max \left\{ \min\left\{k\geq 1~:~k\log k  \geq \frac{14d^2}{\nu a} \log \frac{n}{b}\right\},~ \cseven \right\}\,.
		\end{align*}
		Then, with probability $1-O(n^{-1})$, the following holds: 
		For any two subsets $\mathcal{A}$ and $\mathcal{B}$,
		\begin{align*}
		\den_{\mathbf{A}}(\mathcal{A},\mathcal{B}) \leq k_{|\mathcal{A}|,|\mathcal{B}|} \nu|\mathcal{A}||\mathcal{B}|\,.
		\end{align*}
		
	\end{claim}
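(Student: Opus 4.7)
The plan is to apply Lemma~\ref{machinery} to each pair $(\mathcal{A},\mathcal{B})$ individually and then take a union bound over all such pairs. Fix $(\mathcal{A},\mathcal{B})$ with $|\mathcal{A}|=a$ and $|\mathcal{B}|=b$. By the rewriting in~\eqref{whylem6}, $\den_{\mathbf{A}}(\mathcal{A},\mathcal{B})$ is a sum of independent random variables taking values in $[0,d^2]$ whose expectation is at most $\nu a b$. By construction of $k_{a,b}$, one has $k_{a,b}\geq \cseven$ and $k_{a,b}\log k_{a,b} \geq \frac{14d^2}{\nu a}\log(n/b)$; the latter inequality holds whether the maximum in the definition of $k_{a,b}$ is realized by the $\min\{\cdots\}$ expression or by the floor $\cseven$, because in the floor case the inner $\min$ is at most $\cseven$ and $x\log x$ is monotone on $[1,\infty)$. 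Hence Lemma~\ref{machinery} yields
\[
\Pr\bigl(\den_{\mathbf{A}}(\mathcal{A},\mathcal{B}) > k_{a,b}\,\nu a b\bigr) \;\leq\; \exp\Bigl(-\tfrac{1}{2d^2}k_{a,b}\log k_{a,b}\cdot \nu a b\Bigr) \;\leq\; e^{-7b\log(n/b)}.
\]

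Next I would take a union bound over ordered pairs with $a\leq b$; the opposite ordering is handled symmetrically, and the surrounding proof of Lemma~\ref{lem:bdd} invokes the claim only in the regime $b<n/e$, so I restrict to that range. Using $\binom{n}{x}\leq (en/x)^{x}$ together with the monotonicity of $x\mapsto x\log(en/x)$ on $(0,n/e]$, one obtains $\binom{n}{a}\binom{n}{b}\leq e^{2b\log(en/b)}$, whence
\[
\binom{n}{a}\binom{n}{b}\,e^{-7b\log(n/b)} \;\leq\; e^{2b - 5b\log(n/b)}.
\]
Summing first over $a\in\{1,\ldots,b\}$ (at most $b$ terms) and then over $b\in\{1,\ldots,\lfloor n/e\rfloor\}$ leaves the series $\sum_{b\geq 1}b\,e^{2b-5b\log(n/b)}$, which is geometrically dominated by its $b=1$ contribution of order $n^{-5}$; the overall failure probability is therefore $O(n^{-1})$ (in fact much smaller).

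The only mildly delicate point is verifying the boundary case $k_{a,b}=\cseven$ when extracting the tail $e^{-7b\log(n/b)}$; all remaining steps are routine after the constant $14$ in the definition of $k_{a,b}$ has been chosen precisely to absorb both the factor $\tfrac{1}{2d^2}$ from Lemma~\ref{machinery} and the $\binom{n}{b}\leq (en/b)^{b}$ entropy term in the union bound. No substantive obstacle is anticipated beyond this bookkeeping.
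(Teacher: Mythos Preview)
Your approach is correct and essentially identical to the paper's: both apply Lemma~\ref{machinery} per pair, extract the tail $e^{-7b\log(n/b)}$ from the definition of $k_{a,b}$, and union-bound against $\binom{n}{a}\binom{n}{b}\leq e^{a\log(en/a)+b\log(en/b)}$. The only difference is bookkeeping: the paper shows each $(a,b)$-summand is at most $n^{-3}$ via the chain $7b\log(n/b)\geq a+b+a\log(n/a)+b\log(n/b)+3\log n$ (using $a\leq b\leq n/e$) and then multiplies by the crude $n^2$ count of size pairs, which sidesteps your slightly informal ``geometrically dominated'' claim for the tail series.
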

	\begin{IEEEproof}
		It is sufficient to prove the following:
$\Pr\left(\bigcup_{\mathcal{A},\mathcal{B}} \left[\den_{\mathbf{A}}(\mathcal{A},\mathcal{B})> k_{|\mathcal{A}|,|\mathcal{B}|} \nu|\mathcal{A}||\mathcal{B}|\right] \right) = O(n^{-1})$. 
Note that in the case of $|\mathcal{A}|=a$ and $|\mathcal{B}|=b$, $\Pr(\den_{\mathbf{A}}(|\mathcal{A}|,|\mathcal{B}|) > k_{\mathcal{A},\mathcal{B}} \cdot \nu |\mathcal{A}||\mathcal{B}|)$ is upper bounded by $\exp(-\frac{1}{2d^2}k_{a,b}\log k_{a,b} \cdot\nu ab)$ as $k_{a,b} \geq \cseven$.
Hence, the union bound yields:
		\begin{align*}
		&\Pr\left(\bigcup_{\mathcal{A},\mathcal{B}} \left[\den_{\mathbf{A}}(\mathcal{A},\mathcal{B})> k_{\mathcal{A},\mathcal{B}}\cdot \nu|\mathcal{A}||\mathcal{B}|\right] \right) \\
		&{\leq}  \sum_{a,b} \left[\binom{n}{a} \binom{n}{b}\exp\left(-\frac{1}{2d^2}k_{a,b}\log k_{a,b} \cdot\nu ab\right)\right]\,.
		\end{align*} 
		Since there are at most $n^2$ choices for $(a,b)$, it is enough to show that $\binom{n}{a} \binom{n}{b}\exp\left(-\frac{1}{2d^2}k_{a,b}\log k_{a,b} \cdot\nu ab\right) \leq \frac{1}{n^3}$ for any $(a, b)$.
		
		By the definition of ``$k_{a,b}$'', we have $k_{a,b} \log k_{a,b} \geq \frac{14d^2}{\nu a} \log \frac{n}{b}$.
		Hence, 
		\begin{align*}
		&\frac{1}{2d^2}	k_{a,b} \log k_{a,b} \cdot \nu ab \geq  7b \log \frac{n}{b}\overset{(a)}{\geq } a + b +5  b \log \frac{n}{b}\\
		&\overset{(b)}{\geq} a + b + a\log {n}{a} +b\log {n}{b} +3 \log n\,,
		\end{align*}
		where ($a$) follows since $a\leq b\leq \frac{n}{e}$; ($b$) follows since $x\log x$ is increasing on $[1, \frac{n}{e}]$.
		Thus, we have
		\begin{align*}
		&\exp\left(-\frac{1}{2d^2}k_{a,b}\log k_{a,b} \cdot\nu ab\right)\\
		&\leq \exp\left(- a\left(\log \frac{n}{a} +1\right) -b\left(\log \frac{n}{b}+1\right) -3 \log n\right).
		\end{align*}
		Further, since $\binom{n}{m}\leq \left(\frac{ne}{m}\right)^m= \exp{(m(\log{n/m}+1))}$, 
		\begin{align*}
\binom{n}{a} \binom{n}{b} = \exp\left(a\left(\log{n/a} +1\right) + b\left(\log{n/b}+1\right)\right).
		\end{align*}
Thus, $\binom{n}{a} \binom{n}{b}e^{-\frac{1}{2d^2}k_{a,b}\log k_{a,b} \cdot\nu ab} \leq e^{-3\log n}$.
	\end{IEEEproof}
	
	By the above claim, we have that for any $\mathcal{A}$ and $\mathcal{B}$ such that $|\mathcal{A}|\leq |\mathcal{B}|\leq \frac{n}{e}$, either of the following holds: 
	\begin{align*}
	&\text{(i)}~ \den_{\mathbf{A}}(\mathcal{A},\mathcal{B}) \leq \cseven \nu|\mathcal{A}||\mathcal{B}|~~\text{or;}\\
	&\text{(ii)}~k_{|\mathcal{A}|,|\mathcal{B}|} \log k_{|\mathcal{A}|,|\mathcal{B}|}   =  \frac{14d^2}{\nu a} \log \frac{n}{b}\,.
	\end{align*}
	For (ii), one can derive:
$\den_{\mathbf{A}}(\mathcal{A},\mathcal{B})
\leq k_{|\mathcal{A}|,|\mathcal{B}|} \cdot \nu |\mathcal{A}| |\mathcal{B}|= \left( \frac{14d^2}{ \nu|\mathcal{A}| \log k_{|\mathcal{A}|,|\mathcal{B}|}} \log \frac{n}{|\mathcal{B}|}\right)\cdot \nu |\mathcal{A}| |\mathcal{B}|
\leq  \left(\frac{14d^2}{\nu|\mathcal{A}| \log \frac{\den_{\mathbf{A}}(\mathcal{A},\mathcal{B})}{\nu|\mathcal{A}||\mathcal{B}|}}\log \frac{n}{|\mathcal{B}|} \right)\cdot \nu |\mathcal{A}| |\mathcal{B}|$,
	and hence $\den_{\mathbf{A}}(\mathcal{A},\mathcal{B}) \log \frac{\den_{\mathbf{A}}(\mathcal{A},\mathcal{B})}{\nu|\mathcal{A}||\mathcal{B}|} \leq 14d^2 |\mathcal{B}| \log \frac{n}{|\mathcal{B}|}$.
\end{enumerate}
Combining the above two cases 1) and 2), the proof is completed by taking $\cten = \max\{\cnine e,~ \cseven  \}$ and $\celeven = 14d^2$.

\subsection{Proof of Lemma~\ref{lem:comp}} \label{pf:comp}
One can easily show that the LHS is equal to $\sum_{e\in \mathcal{E}: W_e\neq \era}\left[\ind \left\{ \ye(\mathbf{X}\oplus\mathbf{e}_1) \neq W_e \right\} -\ind \left\{ \ye(\mathbf{X}) \neq W_e \right\} \right]$.
Since the summand is nonzero only if $\ye(\mathbf{X}\oplus\mathbf{e}_1)\neq \ye(\mathbf{X})$, we count the number of such edges.

First, observe that if $1 \notin e$, $\ye(\mathbf{X}\oplus\mathbf{e}_1) = \ye(\mathbf{X})$.
Further, if two (or more) nodes other than node $1$ are of different affiliations, then $\ye(\mathbf{X}\oplus\mathbf{e}_1) = \ye(\mathbf{X}) = 0$.
Thus, $e$ must include $1$ and all the other nodes in $e$ must be of the same affiliation: 
If all the nodes of $e$ other than node $1$ are affiliated with community $0$, $\ye(\mathbf{X}\oplus\mathbf{e}_1)=1$ and $\ye(\mathbf{X})=0$; and 
if all the nodes of $e$ other than node $1$ are affiliated with community $1$, $\ye(\mathbf{X}\oplus\mathbf{e}_1)=0$ and $\ye(\mathbf{X})=1$.

Define the set of edges corresponding to the former case as $\mathcal{E}_1$, and that corresponding to the latter case as $\mathcal{E}_2$, i.e., $\mathcal{E}_1:= \{e \in \mathcal{E}~:~1\in e~~\text{and}~~(e\setminus \{1\})\subset \{\ee n+1 , \ee n+2 , \ldots, \ee n + n/2 \} \}$ and $\mathcal{E}_2:= \{e \in \mathcal{E}~:~1\in e~~\text{and}~~(e\setminus \{1\})\subset \{2,3,\ldots, \ee n, \ee n +n/2+1, \ee n +n/2+2, \ldots, n \} \}$.
Consider all homogeneous edges in $\mathcal{E}_1$. 
The total contribution of the terms associated with these edges to the sum is $\sum_{e\in \mathcal{E}_1~: W_e\neq \era, e:\text{homogeneous}}\left[\ind \left\{ 1 \neq W_e \right\} -\ind \left\{ 0 \neq W_e \right\} \right]$.
Each term is $-1$ if observation is not corrupted, and $+1$ if observation is corrupted.
Thus, the total contribution is $\sum_{i=1}^{\left| \{ e\in \mathcal{E}_1 ~:~ e \text{ is homogeneous} \}\right|} P_i(2\Theta_i-1) = \sum_{i=1}^{\binom{n/2-\ee n}{d-1}} P_i(2\Theta_i-1)$, where $P_i \overset{\text{i.i.d.}}{\sim} \bern(\alpha_n) $ and $\Theta_i \overset{\text{i.i.d.}}{\sim} \bern(\theta)$.
By rewriting other contributions in a similar way, we complete the proof.

\subsection{Proof of Lemma~\ref{lem:3}}
\label{pf:3}
Let $Z:= \cmatrix \sum_{i=1}^K P_i(2\Theta_i-1) +\ell$ and $\mgf(\lambda) := \ex[e^{\lambda \cmatrix P_1(2\Theta_1-1)}]$. 
Via simple calculation, we have
\begin{align*}
&\Pr\left( Z>0 \right) = \Pr\big(e^{\frac{1}{2} Z} > 1\big) \leq  e^{\frac{1}{2}\ell}\big\{\mgf\big( 1/2\big) \big\}^K \\
&= e^{\frac{1}{2}\ell + K \left\{-\alpha_n(\sqrt{1-\theta}-\sqrt{\theta})^2 + O(\alpha_n^2)\right\}.}.
\end{align*}
\vspace{-0.2in}
\small
\bibliographystyle{IEEEtran}
\bibliography{ref}
\begin{IEEEbiographynophoto}{Kwangjun Ahn}
	 received his B.S. degree in the Department of Mathematical Sciences from Korea Advanced Institute of Science and Technology (KAIST) in 2017. 
	 He is currently a military police desk clerk in the US Army as a part of Korean Augmentation to the US Army (KATUSA). 
	His research interests lie in applied mathematics. 
\end{IEEEbiographynophoto}
\begin{IEEEbiographynophoto}{Kangwook Lee} is a postdoctoral researcher in the School of Electrical Engineering at Korea Advanced Institute of Science and Technology (KAIST). He earned his Ph.D. in EECS from UC Berkeley in 2016. He is a recipient of the KFAS Fellowship from 2010 to 2015. His research interests lie in information theory and machine learning.
\end{IEEEbiographynophoto}
\begin{IEEEbiographynophoto}{Changho Suh}(S'10--M'12) is an Ewon Associate Professor in the School of Electrical Engineering at Korea Advanced Institute of Science and Technology (KAIST) since 2012. He received the B.S. and M.S. degrees in Electrical Engineering from KAIST in 2000 and 2002 respectively, and the Ph.D. degree in Electrical Engineering and Computer Sciences from UC-Berkeley in 2011. From 2011 to 2012, he was a postdoctoral associate at the Research Laboratory of Electronics in MIT. From 2002 to 2006, he had been with the Telecommunication R\&D Center, Samsung Electronics. Dr. Suh received the 2015 Haedong Young Engineer Award from the Institute of Electronics and Information Engineers, the 2013 Stephen O. Rice Prize from the IEEE Communications Society, the David J. Sakrison Memorial Prize from the UC-Berkeley EECS Department in 2011, and the Best Student Paper Award of the IEEE International Symposium on Information Theory in 2009. 
\end{IEEEbiographynophoto}
\end{document}